\title{On asymptotic stability  on a center hypersurface at the soliton for even solutions of the NLKG  when $2\ge p> \frac{5}{3}$ }
\author{ Scipio Cuccagna  \and  Masaya Maeda  \and   Federico Murgante  \and  Stefano Scrobogna }
\DeclareMathAlphabet{\mathcal}{OMS}{cmsy}{m}{n}
\DeclareFontFamily{U}{mathc}{}
\DeclareFontShape{U}{mathc}{m}{it}%
{<->x*[1.03] mathc10}{}
\DeclareMathAlphabet{\mathscr}{U}{mathc}{m}{it}
\DeclareMathAlphabet{\mathpzc}{OT1}{pzc}{m}{it}
\tikzset{cross/.style={cross out, draw=black, minimum size=2*(#1-\pgflinewidth), inner sep=0pt, outer sep=0pt},
cross/.default={1pt}}
  \DeclareSymbolFont{stix@largesymbols}{LS2}{stixex}{m}{n}
  \DeclareMathDelimiter{\lBrace}{\mathopen} {stix@largesymbols}{"E8}%
                                            {stix@largesymbols}{"0E}
  \DeclareMathDelimiter{\rBrace}{\mathclose}{stix@largesymbols}{"E9}%
                                            {stix@largesymbols}{"0F}
\DeclareSymbolFontAlphabet{\amsmathbb}{AMSb}%
\definecolor{dkgreen}{rgb}{0,0.6,0}
\definecolor{gray}{rgb}{0.5,0.5,0.5}
\definecolor{mauve}{rgb}{0.58,0,0.82}
\tiny\color{gray},
\def\maketag@@@#1{\hbox{\m@th\normalfont\normalsize#1}}
\newcommand{\R}{{\mathbb R}}
\def\im{{\rm i}}
\newcommand{\C}{\mathbb{C}}
\def\({\left(}
\def\){\right)}
\def\<{\left\langle}
\def\>{\right\rangle}
\newcommand{\sech}{{\mathrm{sech}}}
\newcommand{\even}{{\mathrm{even}}}
\newcommand{\odd}{{\mathrm{odd}}}
\newcommand{\supp}{{\mathrm{supp}\ }}
\newcommand{\dd}{\textnormal{d}}
\newcommand{\pare}[1]{\left( #1 \right)}
\newcommand{\norm}[1]{\left\| #1 \right\|}
\newcommand{\av}[1]{\left| #1 \right|}
\newcommand{\bra}[1]{\left[ #1 \right]}
\newcommand{\comm}[2]{\bra{ #1 \  , \   #2  }}
\newcommand{\xfrac}[2]{\left. #1 \middle/ #2 \right. }
\newcommand{\set}[1]{\left\{ #1 \right\}}
\newcommand{\defeq}{\vcentcolon=}
\newcommand{\eqdef}{=\vcentcolon}
\newcommand{\spp}{\sigma_{\textnormal{pp}}}
\newcommand{\Span}[1]{\textnormal{Span}\set{#1}}
\renewcommand{\Re}{\textnormal{Re}}
\renewcommand{\Im}{\textnormal{Im}}
\newcommand{\RN}[1]{%
  \textup{\uppercase\expandafter{\romannumeral#1}}%
}
\newcommand{\bmz}{{\bm z}}
\newcommand{\bmPhi}{{\bm \Phi}}
\newcommand{\cT}{\mathcal{T}}
\newcommand{\bR}{\mathbb{R}}
\newcommand{\cH}{\mathcal{H}}
\newcommand{\cO}{\mathcal{O}}
\newcommand{\bC}{\mathbb{C}}
\newcommand{\ii}{\mathrm{i}}
\newcommand{\ps}[2]{\pare{  #1 \ \middle| \ #2 }}
\newcommand{\psc}[2]{\left\langle  #1 \ ,  \ #2 \right\rangle}
\theoremstyle{theorem}
\newtheorem{theorem}{Theorem}[section]
\newtheorem*{theorem*}{Theorem}
\newtheorem{proposition}[theorem]{Proposition}
\newtheorem{lemma}[theorem]{Lemma}
\theoremstyle{definition}
\newtheorem{rem}[theorem]{Remark}
\newtheorem{remark}[theorem]{Remark}
\newtheorem{assumption}[theorem]{Assumption}
\newtheorem{notation}[theorem]{Notation}
\newtheorem{step}{Step}
\newtheorem{case}{Case}
\numberwithin{equation}{section}
\begin{document}

\maketitle

\begin{abstract} We extend the result of   Kowalczyk, Martel and Munoz \cite{KMM2022} on the existence, in the context of spatially even solutions,  of asymptotic stability on a center hypersurface at the soliton of the defocusing power  Nonlinear Klein Gordon Equation with $p>3$,  to the case $2\ge p> \frac{5}{3}$. The result is attained performing new and refined estimates that allow to close the argument for power law in the range $2\ge p> \frac{5}{3}$.
\end{abstract}

\begin{footnotesize}
\tableofcontents
\end{footnotesize}

\section{Introduction}

We consider the Nonlinear-Klein Gordon Equation (NLKG) on the line
\begin{equation}\label{eq:nlkg1}
  \partial _t^2 u_1- \partial _x^2 u_1+u_1 - f(u_1) =0, \quad { (t,x)\in \R\times \R}
\end{equation}
  where $  f(u_1)\defeq
  |u_1|^{p-1}u_1 $ with $ p_3\defeq2\ge p>p_4\defeq\frac{5}{3} $.
As in      Krieger et al.  \cite{KNS2012},
 Kowalczyk et al. \cite{KMM2022} and    Li  and   L\" uhrmann \cite {LL2023}, we  consider only even solutions, eliminating translations  and simplifying the problem. Translations pose  difficulties,
 see  the complications in the virial inequalities  in  Kowalczyk et al. \cite{KMMV2021}.

\noindent Setting ${\bm u}\defeq(u_1,u_2) ^ \intercal  \defeq(u_1,\dot u_1)^ \intercal  $, where in the sequel $\dot u=\partial _t u$ and $u'=\partial _x u$, we obtain the system
\begin{align}\label{NLKG}
\dot{{\bm u}}=\mathbf{J} \begin{pmatrix}
- \partial _x^2   +1 & 0 \\ 0 & 1
\end{pmatrix} {\bm u}+  {f}(u_1) \mathbf{j}, \text{ with } \mathbf{J}\defeq\begin{pmatrix}
0 & 1 \\ -1 & 0
\end{pmatrix}
\end{align}
and with $\mathbf{j}=(0,1)^\intercal$, later also $\mathbf{i}=(0,1)^\intercal$ (not to be confused with the imaginary unit $\im$).
We denote by $Q$ the standing wave, given, see for example in \cite[formula (3.1)]{CGN2007}, by
\begin{align}\label{eq:solit1}
 Q(x)\defeq \left( \frac{p+1}{2}  \right) ^{\frac{1}{p-1}}  \sech    ^{\frac{2}{p-1}}\left(  \frac{p-1}{2}x   \right) .
\end{align}
The system \eqref{NLKG} is a  Hamiltonian system for the symplectic form
\begin{align} &
\Omega({\bm u},{\bm v})\defeq\<\mathbf{J}^{-1}{\bm u},{\bm v}\>,      \label{eq:inner1} \\
&  \label{eq:inner0}
\<{\bm u},{\bm v}\> \defeq  \pare{ {\bm u},\overline{{\bm v}}} ,
&&
  \pare{
 {\bm u},  {\bm v} }  \defeq\int _{\R} {{\bm u}}(x) ^ \intercal \ {\bm v}(x) dx,
\end{align}
and  Hamiltonian,  or  energy function, given by
\begin{align}\label{eq:energy}
E({\bm u})=\frac{1}{2} \left( \norm{ u'_1} ^2 _{L^2\left( \R \right) }+ \norm{ u _2 } ^2_{L^2\left( \R \right) }  \right)-\int_{\R}\dfrac{\av{u_1}^{p+1}}{p+1}\,dx .
\end{align}
We consider the  space of even functions
\begin{equation*}
\bm{\mathcal{H}  }   ^{1}_{\even}
=
\set{{\bf u}=\pare{u_1, u_2} ^ \intercal  \ \middle| \ {\bf u} \in \boldsymbol{\mathcal{H}  }^1 \ \text{and} \ {\bf u}\pare{x} = {\bf u}\pare{-x} }
\end{equation*}
  with norm \begin{align}\label{eq:ennrom}
 \|{\bm u}\|_{\boldsymbol{\mathcal{H}  }   ^{1}   }^2=\|u_1\|_{H^1 }^2+\|u_2\|_{L^2 }^2.
\end{align}
We recall that Kowalczyk et al. \cite{KMM2022} proved, for $p>3$, that there exists an orbitally stable central hypersurface   in $\boldsymbol{\mathcal{H}  }   ^{1}_{\even}$. With minor modifications, their proof   extends also to our case  $2\ge p > \frac{5}{3}$, so we assume it.  Furthermore, Kowalczyk et al. \cite{KMM2022} proved that
$Q$ is asymptotically stable for solutions on this hypersurface.   Li  and   L\" uhrmann \cite {LL2023} considered    $p=p_3=2$,  with $f(u_1)=u_1^2$.
 In  Li  and   L\" uhrmann \cite {LL2023}  the  threshold resonance $\mu _3=1$  is potentially an obstruction to the proof, but since the corresponding generalized eigenfunctions  are odd in $x$, they are naturally orthogonal to the elements of $\boldsymbol{\mathcal{H}  }   ^{1}_{\even}$.  So the threshold resonance $\mu _3=1$
does not affect the argument, in line with the discussion in Kowalczyk and Martel \cite{KM22} on the asymptotic stability of odd perturbations of the standing kink of the $\phi ^4$ model, where the threshold resonance is \textit{even}, and so does not affect odd solutions.  In the case $p=2$, for the non smooth nonlinearity $f(u_1)=|u_1| u_1 $ treated here,  the same will happen in our proof.

  Notice that the case   $2<p<3$  is simpler.  Case $p=3$ is different and still open, because of an  \textit{even} threshold resonance,   certainly affecting even solutions of \eqref{eq:nlkg1}. For a partial finite time result, framed with   very different techniques, see  L\" uhrmann and Schlag \cite{LS2023}. Very recently  Palacios and Pusateri \cite{PalPus24}  obtained a finite time result mixing arguments similar to  Kowalczyk et al. \cite{KMM2022}  and phase space analysis.
 Finally, as $p\rightarrow 1 ^+$, besides the case of the $p>1$ for which there is a resonance, which form a discrete subset in $(1,\infty)$, there is also an increase of the number of eigenvalues $p\rightarrow 1 ^+$, with some converging to 0, as can be seen in the pictures in Chang et al. \cite{CGN2007}. While   methods of proving dispersion by means of dispersive estimates  or Strichartz estimates, with the Duhamel formula,  are not particularly robust and are not applicable for $p$ sufficiently close to 1, in fact for $p<3$, low values of $p$ arbitrarily close to 1 could in principle be treated by the dispersion theory in Kowalczyk et al. \cite{KMM2022}. Furthermore,  the appearance  of a complicated set of eigenvalues, is not, by itself, an impossible problem. The corresponding discrete modes  slow the rate of stabilization, as discovered computationally  in \cite{BCS2011} and observed in a rigorous context   in \cite{BP1995,SW1999,TY3,zhousigal}  as well as in many other papers, most of them quoted in \cite{CMsurv}, to which we refer for an extensive bibliography. But this slowing of the stabilization does not affect proofs of stabilizations similar to  \cite{Cu2011}, which is older, or to \cite{CMS2023}.
  In the stable case,  there is a well known stabilization mechanism that all the references utilize,  denominated Nonlinear Fermi Golden Rule (FGR), was initially thought by Sigal \cite{Sigal1993}, involving the   nonlinear interaction of the continuous modes with the continuous ones. Using the FGR,   older references like \cite{Cu2011}, which used normal forms arguments, and  more recent ones like \cite{CMS2023}, which used the so called Refined Profile, were able to deal with generic configurations of eigenvalues. Finding the Refined Profile is a relatively simple operation
 involving Taylor expansions  of sufficiently high order, depending  on how close to the ground state get the excited states, of the function $f(\boldsymbol{\Phi}\pare{\bm z}_1(x))$, which appears later in
 \eqref{eq:rp1}. In the context of the present paper, the low regularity  of the power function however becomes a problem.   Notice though that  $f(\boldsymbol{\Phi}\pare{\bm z}_1(x))$ is smooth at values of $({\bm z},x)$ for which $\boldsymbol{\Phi}\pare{\bm z}_1(x)\neq 0$.
The gist of the argument and the intuition behind this paper is that  by an appropriate partitioning of spacetime, for example with the distinction in \cref{case:one} and \cref{case:two} in the proof of \Cref{lem:rpcorr}, we can take up to 2 derivatives even when $p<2$. Furthermore, to get estimates like \eqref{eq:Ijest2}, and specifically the fact that $ Q ^{p-2}\varphi ^2_2\sim e^{-|x|} $ (see later for the notation), we neutralize the growth of  $ Q ^{p-2}$ as $x\to  \infty$    with the decay of $\varphi ^2_2$, see later in \eqref {eq:asympt_eigenf}.   It is possible, in principle, that a more systematic analysis of this type  will allow to treat cases with $p$   closer to 1. The specific idea utilized here breaks down
for $p_5= \frac{3}{2}<p<p_4= \frac{5}{3}$, which would have  an additional term
$ Q ^{p-2}\varphi ^2_4 \sim  e^{ (2p-3)|x|}  $,  unbounded as $x\to  \infty$.

 \noindent There is a large literature, beyond the above references, like the influential  Kowalczyk et al. \cite{KMM2017}, which gave great impulse to the literature on 1 dimensional stabilization problems.
  There are many    papers with a different framework and origin,  involving dispersive estimates, like \cite{CLL2020,CP2022,DM2020,GP2022,GPZ2023, LP2022} and   references therein. A quite extensive list of references known to the  authors knowledge  which had appeared before the year 2019  can be found in    \cite{CMsurv}.

\subsection{On the linearization of \Cref{NLKG} around $ Q $} Here we give some information about the linearization of   \Cref{NLKG} around $ Q $, cfr. \cite{CGN2007}.
Key to the analysis is the operator (often denoted, in the literature, by $L_+$),
\begin{align}\label{eq:lin1}
  L_0\defeq - \partial _x^2   +1-p Q ^{p-1} .
\end{align}
  It is well known, see Grillakis et  al. \cite[\S 6]{GSS1},  that $Q$ is orbitally unstable.  Indeed, if we set
\begin{align}\label{eq:lin1.1}
\mathbf{L}_0\defeq\begin{pmatrix}
L_0 & 0 \\ 0 & 1
\end{pmatrix},
\end{align}
the linearization
    of \eqref{NLKG}   at $Q \mathbf{i}$ is
  \begin{equation*}
  \mathbf{J}\mathbf{L}_0
  \defeq
  \pare{
\begin{array}{cc}
0&1 \\ -L_0 & 0
\end{array}
  } ,
  \end{equation*}
   admits   point spectrum
\begin{equation*}
\spp \pare{\mathbf{J}\mathbf{L}_0 } = \set{ \nu \defeq \pm \sqrt{-\mu} \ \middle| \ \mu \in \spp \pare{L_0}
}.
\end{equation*}
In particular the elements of $ \spp \pare{L_0} $   as functions of $ p $ are
\begin{align}
\label{eq:pM_kj}
p_N \defeq  \begin{cases}\frac{N+1}{N-1} \text{  for } N > 1,\\  \infty \text{ if } N=1, \end{cases}
&&
 k_j \defeq \frac{p+1}{2}-\frac{j(p-1)}{2} , \quad j\in\mathbb{N},
\end{align}
if $ p_{N+1}  < p<p_N$   then  $ \spp\pare{L_0} $     has  $ N+1 $ elements    and they   are explicitly  given by
\begin{align} \label{eq:eigenv} \mu _m \defeq 1-k^2_m,
&& m=0, \ldots , N.
   \end{align}

\noindent When $ p=p_N $ for some $ N\geq 1 $, then from \cref{eq:eigenv} we have $ \mu_N =1 $ and the linear operator $ {\bf J}\mathbf{L}_0 $ has a threshold  resonance at the boundary of the continuous spectrum. Here we will consider \begin{align}\label{def:valueN}
  N =\begin{cases}
 3   \text{   if }  p_3=2> p>p_4=\frac{5}{3}  \\
 2 \text{   if }  p=2
\end{cases}
\end{align}
  and    we  will use
 \begin{align}\label{eq:valk}
   k_0 = \frac{p+1}{2}, &&  k_1=1  , && k_2= \frac{3-p}{2} , && k_3= 2-p,  &&  k_4= \frac{5-3p}{2}.
 \end{align}
   Like in   Krieger et al.  \cite{KNS2012},
 Kowalczyk et al. \cite{KMM2022} and    Li  and   L\" uhrmann \cite {LL2023}, we will prove the   asymptotically stability of $Q$ in the central hypersurface of Kowalczyk et al. \cite{KMM2022}. The constraint  $2\ge p> \frac{5}{3}$  renders the nonlinearity more problematic  compared  \cite{KNS2012,KMM2022,LL2023},  in terms of lower   differentiability and bigger strength. 
For 
\begin{align}\label{eq:defSj}
   S_j \defeq\partial_{x}-k_j \tanh \left(  \frac{p-1}{2}x   \right)
 \end{align}
and for the eigenvalues $ \mu_j $   in \cref{eq:eigenv} ,
 we have $
 \ker \pare{L_0-\mu_j} = \Span{\varphi_j} ,$
 with
 \begin{align}
 \label{eq:eigC}
 \varphi_0 \defeq Q ^{k_0} ,  &&
   \varphi_1 \defeq S^\ast_0 Q^{k_1} =   \frac{p-1}{2} \ Q'  ,  && \varphi_2 \defeq  S_0^*S_1^* Q ^{k_2} , && \varphi_3 \defeq S_0^*S_1^* S_2^* Q ^{k_3}.
 \end{align}
We emphasize that the
$ S^\ast_j $ change the symmetry of a function,  so that we have the very general fact that $ \varphi_0 $ and $ \varphi_2 $ are even while $ \varphi_1 $ and $ \varphi_3 $ are odd.  It is elementary and well known \cite{DT1979}, that
 \begin{equation}
 \label{eq:asympt_eigenf}
 \av{\varphi_j\pare{x}}\sim e^{-k_j\av{x}} \text{ as }   \av{x}\to \infty  .
 \end{equation}
Using   \eqref{eq:eigC}, for
\begin{align}\label{eq:matr_eig1}
{\mathbf{Y}}_{\pm }\defeq\begin{pmatrix}
\varphi_0\\ \pm  \nu _0\varphi_0
\end{pmatrix} ,
&&
{\mathbf{Z}}_{\pm }\defeq\begin{pmatrix}
\varphi_0\\ \pm  \nu _0 ^{-1}\varphi_0
\end{pmatrix} ,
\end{align}
we have
 \begin{align*}
\mathbf{J}\mathbf{L}_0  {\mathbf{Y}}_{\pm }   = \pm \nu _0   {\mathbf{Y}}_{\pm }
 .
\end{align*}
We set
\begin{align}\label{eq:matr_eig1}
\boldsymbol{\Phi}_0\defeq \frac{  {\mathbf{Y}}_{+ }- \im {\mathbf{Y}}_{- }}{2} = \frac{1}{2}\binom{1-\ii}{\pare{1+\ii}\nu_0} \varphi_0 \  .
\end{align}
Then,
\begin{align}\label{eq:matr_eig12}
\mathbf{J}\mathbf{L}_0 \boldsymbol{\Phi}_0 =  \nu _0 \bar{\boldsymbol{\Phi}}_0.
\end{align}
Similarly, setting $ \lambda  = \sqrt{\mu _2}>0 $ we define
\begin{align}\label{eq:matr_eig2}
\boldsymbol{\Phi}_2\defeq\begin{pmatrix}
 1 \\ \im \lambda
\end{pmatrix} \ \varphi_2 \  ,
\end{align}
so that
\begin{align}\label{eq:matr_eig3}
\mathbf{J}\mathbf{L}_0 \boldsymbol{\Phi}_2=\im \lambda  \boldsymbol{\Phi}_2   &&
 \mathbf{J}\mathbf{L}_0 \bar{\boldsymbol{\Phi}}_2=-\im \lambda \bar{\boldsymbol{\Phi}}_2.
\end{align}
Notice that for \begin{align}\label{def:calA}
 \mathcal{A} =S_0\cdots S_N   , \text{ for $N$  as in \eqref{def:valueN},}
\end{align}
 we have
 \begin{align}\label{eq:DarConj2}
\mathcal{A}^*  L_0=L_{N+1}\mathcal{A} ^*,
\end{align}
where
\begin{align}\label{eq:opLj}
 L_{j}\defeq  - \partial _x^2   +1-k_{j-1}k_{j }\frac{2}{p-1}Q ^{p-1}.
\end{align}
The procedure outlined in \Cref{eq:opLj,eq:DarConj2,def:calA} transforms the linearization of \cref{NLKG} around $ Q $ from
$
\dot U = {\bf J} \mathbf{L}_0 U ,
$
into the new linear equation, in the auxiliary variable $ V : = \mathcal{A} U $,
\begin{align}
\label{eq:linearNLKG_2}
\dot V = {\bf J } {\bf L}_{ N+1} V ,
&&
{\bf L}_{ N+1}
: =
\left(
\begin{array}{cc}
L_{N+1} & 0
\\
0 & 1
\end{array}
\right) ,
\end{align}
where for  $ p_3 >  p>p_4 $ the operator $L_{N +1}=L_4$ has a repulsive potential, in the sense that
\begin{align}\label{eq:repuls}
  -k_{3}k_{4 }\frac{2}{p-1}   x \left(Q ^{p-1} \right) ' <0 \text{  for all } x\neq 0,
\end{align}
while for $p=2$ we have $L_{N +1}=L_3=- \partial _x^2   +1$  but fortunately the parity of the solutions obviates to the lack of this repulsivity. \\

 \noindent Given a constant $\kappa >0$, we consider the spaces defined by the following norms,
\begin{align}\label{eq:enwei}
 \| {\bm u}\|_{ \boldsymbol{\mathcal{H}}^{1 }_{-\kappa }}\defeq \| \sech \left( \kappa x \right)   {\bm u}\|_{ \boldsymbol{\mathcal{H}}^{1 }  }  , &&
  \| {\bm u}\|_{ L^{2 }_{-\kappa }}\defeq \| \sech \left( \kappa x \right)   {\bm u}\|_{ L^2  }.
\end{align}
In the sequel   $\kappa>0$ is fixed and smaller than a certain finite set of positive numbers coming up in the proof. For ${\bm z}=(z_1,z_2) ^\intercal \in \C^2$, we set
\begin{align}\label{eq:rp}&
 \boldsymbol{\Phi}\pare{\bm z} \defeq Q \mathbf{i} +  \tilde{\boldsymbol{\Phi}}[{\bm z}] ,
  \text{ where }
 \tilde{\boldsymbol{\Phi}}[{\bm z}] \defeq z_1 \boldsymbol{\Phi}_0+z_2 \boldsymbol{\Phi}_2+  \overline{z}_1 \bar{\boldsymbol{\Phi}}_0+\overline{z}_2 \bar{\boldsymbol{\Phi}}_2
   = 2 \Re\pare{ z_1 \boldsymbol{\Phi}_0+z_2 \boldsymbol{\Phi}_2}  \\& = z_{1R}  {\mathbf{Y}}_{+ }-z_{1I}  {\mathbf{Y}}_{- } + 2\varphi _2  \begin{pmatrix}
z_{2R}\\  -\lambda z_{2I}
\end{pmatrix} \text{ where }z_{jR}=\Re z_j \text{ and }z_{jI}=\Im z_j   \nonumber
  .
\end{align}
By the linearity of $\tilde{\boldsymbol{\Phi}}[{\bm z}]$   in ${\bm z}$, we obviously have
\begin{equation}
\label{eq:DzPhi}
D_{\bm z}\boldsymbol{\Phi} ({\bm z}){\bm w} = \tilde{\boldsymbol{\Phi}}\bra{\bm w},
\end{equation}
where $D_{\bm z}\varphi({\bm z}){\bm w}:=\left.\frac{d}{ds}\right|_{s=0}\varphi({\bm z}+s{\bm w})$.
We set
\begin{align}\label{eq:cont_mod}
 \boldsymbol{\mathcal{H}}^{1 }_{c} \defeq \set{ \boldsymbol{\eta} \in \boldsymbol{\mathcal{H}  }   ^{1}_{\even}(\R , \C ^2 ) \ \middle| \  \left \langle  \mathbf{J}\boldsymbol{\eta} , D_{\bm z} \boldsymbol{\Phi}\pare{\bm z} \boldsymbol{\Theta} \right  \rangle  =0 \text{ for all }\boldsymbol{\Theta}\in \C^2} .
\end{align}
There exists a nonzero  $ {\bf g}_2 \in L^\infty\pare{\bR; \bC^2} $  such that $ {\bf JL}_0{\bf g}_2= \ii \ 2\lambda {\bf g}_2 $. It is of the form $ {\bf g}_2 = \binom{1}{ 2\ii \lambda} g $ with $ L_0 g = 4\lambda^2 g $, i.e. $ g $ is a \emph{ distorted plane-wave} for the Schr\"odinger operator $ L_0 $, see \cite{DT1979}.
\begin{assumption}[Fermi Golden Rule]\label{ass:FGRintro}
 We will consider the    $ p\in \left(\xfrac{5}{3}, 2 \ \right] $  for which
\begin{equation*}
- p \pare{p-1} \psc{Q^{p-2}\varphi_2^2}{g}  \neq 0,
\end{equation*}
where $ Q $ and $ \varphi_2 $ are defined in \cref{eq:solit1,eq:eigC} respectively.
\end{assumption}

\begin{rem}
    \Cref{ass:FGRintro} is proved   for $p=2$ by   Li  and   L\" uhrmann \cite [Lemma 3.1] {LL2023} and  for   $p $ close to 2 remains true by continuous dependence on $p$. Since this dependence is also analytic,    \Cref{ass:FGRintro}  is true for all but  a discrete subset of $p\in   [  5/3, 2  ) $. It would be interesting to check computationally if Assumption  \ref{ass:FGRintro} holds for  all $  p\in \left(\xfrac{5}{3}, 2 \ \right] $. This is feasible especially because $Q$, $\varphi_2$  and also $g$ are known explicitly, but we did not try it.
\end{rem}

Our main result is the following.

\begin{theorem}\label{thm:main1}
Under \Cref{ass:FGRintro},
for any $a>0$ and $ \epsilon >0$
  there exists $\delta_0>0$ such that for  $\delta \in (0,\delta_0)$ if
  \begin{align}\label{eq:cond_sta}
    \sup _{t\ge 0} \norm{ {\bm u} (t) -  Q \mathbf{i} }  _{\boldsymbol{\mathcal{H}  }   ^{1}_{\even}(\R , \R ^2 )}<\delta
  \end{align}
    then there exists ${\bm z}\in C ^1\pare{ \R , \C ^{2} } $ and  $ \boldsymbol{\eta }\in C^0 \pare{ \R ,  \boldsymbol{\mathcal{H}}^{1 }_{c} }$ such that we have a global representation
 \begin{align}
 {\bm u}(t) =     {\boldsymbol{\Phi}}\pare{ {\bm z}(t) }   + \boldsymbol{\eta }
   (t)
 \label{eq:main1}
\end{align}
such that
\begin{align}&
 \int _{I }\(   \|     \boldsymbol{\eta }  (t) \| ^2 _{\boldsymbol{\mathcal{H}}^{1} _{-a}(\R )}
 + |z|^4 \) \dd t   \le \epsilon,     \label{eq:main20}
 \\&
 \lim_{t\to+\infty} \norm{     \boldsymbol{\eta }  (t) } ^2 _{\boldsymbol{\mathcal{H}}^{1} _{-a}(\R )}=0 \text{ and }  \label{eq:main2}
\\&
\lim_{t\to  \infty} {\bm z} (t) =0     \text{  .  }   \label{eq:main3}
\end{align}
\end{theorem}

For  $\delta _0>0$ we  set
\begin{align}& \mathcal{B}_0\defeq \set{    \boldsymbol{\varepsilon } \in  \boldsymbol{\mathcal{H}  }   ^{1}_{\even}(\R , \R ^2 )  \ \middle| \   \|     \boldsymbol{\varepsilon}    \| ^2 _{\boldsymbol{\mathcal{H}}^{1}  }  <\delta _0   \text{  and }   \<   \boldsymbol{\varepsilon } , \mathbf{Z} _+\> =0} .  \label{eq:stbm}
\end{align}
The following   existence and uniqueness of a stable central hypersurface, can be proved like   Theorem 2 in  Kowalczyk et al. \cite{KMM2022}.
\begin{theorem}\label{thm:main2}
There exist  $C,\delta _0>0$ and a Lipschitz function $ h: \mathcal{B}_0 \to \R$
 with
 \begin{align}\label{eq:Lipfun}
     \text{$h(0)=0$ and $|h(\boldsymbol{\varepsilon })|\le  C \| \boldsymbol{\varepsilon } \|  _{\boldsymbol{\mathcal{H}}^{1}  } ^{ \frac{p+1}{2}} $}
 \end{align}
 such that, for
 \begin{align}\label{eq:stabman}
     \mathcal{N}\defeq\set{ Q \mathbf{i} + \boldsymbol{\varepsilon }+ h(\boldsymbol{\varepsilon }) \mathbf{Y}_+  \ \Big| \  \boldsymbol{\varepsilon }\in      \mathcal{B}_0             } ,
  \end{align}
 we have the following:
 \begin{description}
   \item[a]  if ${\bm u}_0\in   \mathcal{N}$, then the corresponding solution of \eqref{NLKG} is  defined  for  all $t\ge 0$   and  \begin{align}
  \sup _{t\ge 0} \| {\bm u} (t) - Q \mathbf{i}  \| _{\boldsymbol{\mathcal{H}  }   ^{1}_{\even}(\R , \R ^2 )}\le C  \| {\bm u}_0 - Q \mathbf{i}  \| _{\boldsymbol{\mathcal{H}  }   ^{1}_{\even}(\R , \R ^2 )} .
 \label{eq:main21}
\end{align}
   \item[b] if a a solution ${\bm u} (t)$ defined  for  all $t\ge 0$  satisfies   \begin{align}
    \| {\bm u} (t) - Q \mathbf{i}  \| _{\boldsymbol{\mathcal{H}  }   ^{1}_{\even}(\R , \R ^2 )}   < \frac{\delta _0}{2}
 \label{eq:main22}
 \end{align} then ${\bm u} (t) \in  \mathcal{N}$ for all $t\ge 0$.

 \end{description}

\end{theorem} \qed

\begin{remark} Notice that by $p\le 2$ the exponent $ \dfrac{p+1}{2}$ in \eqref{eq:Lipfun} is smaller than the exponent $3/2$ in  \cite{KMM2022}. In fact here we have inequalities for example like
  \begin{align*}
      \left |   f(Q+v) - f(Q+\tilde{v})-f'(Q) (v-\tilde{v})\right |\lesssim \( |v| ^{\frac{p-1}{2}}+|\tilde{v}| ^{\frac{p-1}{2}} \) |v-\tilde{v}|
  \end{align*}
while in Proposition 4   \cite{KMM2022} the exponent $\dfrac{p-1}{2}$ is replaced by the larger value 1, Theorem \ref{thm:main2} can be obtained    mimicking   almost exactly the discussion in sections 6.2 and 6.3 \cite{KMM2022},  with  only  some minor modifications in the exponents.

\end{remark}

As a corollary, it follows that  the conclusions of  Theorem  \ref{thm:main1}   are true for all the solutions of \eqref{NLKG} in the stable center hypersurface $\mathcal{N}$.   This paper is  focused uniquely on the proof of Theorem  \ref{thm:main1} and, for dispersion, uses the framework of Kowalczyk et al., even though for technical results   often refers to \cite{ CM2022, CM2023,CMS2023}.

\subsection{Refined profile and Fermi Golden Rule }\label{sec:refprof}

 In our earlier work \cite{CMS2023}, we referred to the notion of \textit{Refined Profile}. Our problem here requires only a special case of this notion.
We introduce the notation
\begin{align}\label{eq:rp--1}&  \tilde{{\bm z}}_0\defeq\tilde{{\bm z}}_0[{\bm z}]\defeq (\nu _0 \overline{z}_1, \im \lambda z_2) .
\end{align}
Then $ \boldsymbol{\Phi}\pare{\bm z} = \pare{ \boldsymbol{\Phi}\pare{\bm z}_1, \boldsymbol{\Phi}\pare{\bm z}_2  }^\intercal $, satisfies, using \cref{eq:rp--1,eq:rp,eq:matr_eig3,eq:matr_eig12}, the equation
\begin{equation}\label{eq:LQonPhitilde}
D_{\bm z}\boldsymbol{\Phi}\pare{\bm z}\tilde{{\bm z}}_0 = {\bf J}\mathbf{L}_0 \tilde {\bm \Phi}\bra{z}.
\end{equation}
Thus using the fact that $ Q {\bf i} $ is a stationary solution of \cref{NLKG} 
we obtain
\begin{align}\label{eq:rp1}&
D_{\bm z}\boldsymbol{\Phi}\pare{\bm z}\tilde{{\bm z}}_0 = \mathbf{J}
\begin{pmatrix}
	-\partial_x^2+1 & 0\\ 0 &1
\end{pmatrix}
 \boldsymbol{\Phi}\pare{\bm z} + f(\boldsymbol{\Phi}\pare{\bm z}_1) \mathbf{j} + \widehat{\bm R}\pare{\bm z}, \text{ with}
\\ \label{eq:errp1}&
\widehat{\bm R}\pare{\bm z} := -\left(  f(\boldsymbol{\Phi}\pare{\bm z}_1)   - f(Q)-f' (Q)  \tilde{\boldsymbol{\Phi}}[{\bm z}]_1     \right)  \mathbf{j} .
\end{align}
It is useful to improve the remainder $ \widehat{\bm R}\pare{\bm z}$  as follows.

\begin{lemma} \label{lem:rpcorr} There exist constants $C_1,\delta _1 >0$ and  a map $ \tilde{{\bm z}}_R : D _{\C^2} (0, \delta _1) \to \C^2 $ s.t. the following   holds:
\begin{enumerate}[\bf i)]

\item \label{item:rem_better_3} for
\begin{equation}
\label{eq:errp2}  {\bm R}\pare{\bm z} : = D_{\bm z}\bm{\Phi}\pare{\bm z}\ \tilde{{\bm z}}_R \pare{{\bm z}}
+\widehat{\bm R}\pare{\bm z},
\end{equation}
we have
\begin{align} \label{R:orth} &
 \left< \mathbf{J}{\bm R}\pare{\bm z}  , D _{{\bm z}}\boldsymbol{\Phi}\pare{\bm z}   \Theta  \right>  =0 \text{ for all   $\Theta \in \C ^2 $  and all $ {\bm z} \in D _{\C^2} (0, \delta _1)$;}
\end{align}

\item  \label{item:rem_better_20} we have
 \begin{equation}\label{item:rem_better_1}
 	\av{ \tilde{{\bm z}}_R ({\bm z})} \le C _1 |{\bm z}|^2   \text{  for any ${\bm z}\in D _{\C^2} (0, \delta _1)$;}
 \end{equation}
\item \label{item:rem_better_1.1} $\tilde{{\bm z}}_R \in C^1  \pare{ D _{\C^2} (0, \delta _1), \C^2  }$;
\item \label{item:rem_better_2} $\tilde{{\bm z}}_R$ is twice differentiable in 0.

\end{enumerate}

\end{lemma}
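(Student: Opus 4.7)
The plan is to recast the orthogonality condition \cref{R:orth} as an invertible linear system for the four real parameters of $\tilde{{\bm z}}_R\in\C^2$ and then transfer regularity properties to $\tilde{{\bm z}}_R$. Because $\widetilde{\boldsymbol{\Phi}}[{\bm z}]$ depends $\C$-affinely on $({\bm z},\overline{{\bm z}})$ via \cref{eq:rp}, the tangent vectors $D_{\bm z}\boldsymbol{\Phi}[{\bm z}]\Theta$ are linear combinations of the fixed, ${\bm z}$-independent profiles $\boldsymbol{\Phi}_0,\boldsymbol{\Phi}_2,\overline{\boldsymbol{\Phi}}_0,\overline{\boldsymbol{\Phi}}_2$. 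Substituting \cref{eq:errp2} into \cref{R:orth} and letting $\Theta$ range over a basis of $\C^2$ yields the constant-coefficient system
\begin{equation*}
M\,\tilde{{\bm z}}_R=-{\bm b}({\bm z}),
\end{equation*}
whose right-hand side ${\bm b}({\bm z})$ collects the symplectic pairings $\<\mathbf{J}\hat{\bm R}[{\bm z}],\cdot\>$ against these four profiles, and whose coefficient matrix $M$ collects the symplectic pairings of the profiles among themselves. By \cref{eq:matr_eig12,eq:matr_eig3}, the pairs $\{\boldsymbol{\Phi}_0,\overline{\boldsymbol{\Phi}}_0\}$ and $\{\boldsymbol{\Phi}_2,\overline{\boldsymbol{\Phi}}_2\}$ span the $\mathbf{J}\mathbf{L}_0$-invariant spectral subspaces at the distinct eigenvalue sets $\{\pm\nu_0\}$ and $\{\pm\ii\lambda\}$ respectively; symplectic orthogonality of spectral subspaces at distinct eigenvalue sets then leaves $M$ block-antidiagonal with nonzero blocks $\<\mathbf{J}\boldsymbol{\Phi}_0,\overline{\boldsymbol{\Phi}}_0\>$ and $\<\mathbf{J}\boldsymbol{\Phi}_2,\overline{\boldsymbol{\Phi}}_2\>$, hence invertible. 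Defining $\tilde{{\bm z}}_R({\bm z}):=-M^{-1}{\bm b}({\bm z})$ thus solves \cref{item:rem_better_3}, and since $M$ is independent of ${\bm z}$, the remaining parts \cref{item:rem_better_20,item:rem_better_1.1,item:rem_better_2} reduce to the analogous properties of ${\bm b}$.

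The bound on $\tilde{{\bm z}}_R$ reduces to $|{\bm b}({\bm z})|\lesssim|{\bm z}|^2$, i.e.\ to controlling the second-order Taylor error
$E({\bm z},x):=f(Q+\widetilde{\boldsymbol{\Phi}}[{\bm z}]_1)-f(Q)-f'(Q)\widetilde{\boldsymbol{\Phi}}[{\bm z}]_1$
after pairing with $\varphi_0$ or $\varphi_2$. Here lies the main obstacle: since $p<2$, $f(u)=|u|^{p-1}u$ is not $C^2$, so a naive pointwise Taylor bound fails wherever $Q+\widetilde{\boldsymbol{\Phi}}_1\approx 0$. Following the strategy described after \cref{thm:main2} in the introduction, I partition $\R$ as
\begin{equation*}
\Omega_+({\bm z}):=\set{x\in\R:2|\widetilde{\boldsymbol{\Phi}}[{\bm z}]_1(x)|\le Q(x)},\qquad \Omega_-({\bm z}):=\R\setminus\Omega_+({\bm z}).
\end{equation*}
On $\Omega_+$ the segment $\{Q+\theta\widetilde{\boldsymbol{\Phi}}_1:\theta\in[0,1]\}$ stays comparable to $Q>0$, $f$ is $C^2$ along it, and Taylor's formula gives $|E|\le CQ^{p-2}\widetilde{\boldsymbol{\Phi}}_1^2$. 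On $\Omega_-$ the elementary estimate $|f(a)-f(b)|\le C(|a|^{p-1}+|b|^{p-1})|a-b|$ gives $|E|\le C|\widetilde{\boldsymbol{\Phi}}_1|^p$, which by the constraint $Q\le 2|\widetilde{\boldsymbol{\Phi}}_1|$ and the rewriting $|\widetilde{\boldsymbol{\Phi}}_1|^p=(|\widetilde{\boldsymbol{\Phi}}_1|/Q)^{2-p}Q^{p-2}\widetilde{\boldsymbol{\Phi}}_1^2$ with $2-p>0$ yields the same bound $|E|\le C_p Q^{p-2}\widetilde{\boldsymbol{\Phi}}_1^2$ on all of $\R$. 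Pairing with $|\varphi_i|$ and using $|\widetilde{\boldsymbol{\Phi}}_1|\lesssim|{\bm z}|(|\varphi_0|+|\varphi_2|)$ leaves only integrands of the form $Q^{p-2}\varphi_j\varphi_k\varphi_l$, whose borderline term $Q^{p-2}\varphi_2^2\sim e^{-|x|}$ (cf.\ the decay computation highlighted in the introduction) is integrable, giving $|{\bm b}({\bm z})|\le C|{\bm z}|^2$.

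For the $C^1$ property, differentiation under the integral expresses $\partial_{z_k}{\bm b}_i$ through $(f'(Q+\widetilde{\boldsymbol{\Phi}}_1)-f'(Q))\,\partial_{z_k}\widetilde{\boldsymbol{\Phi}}_1$; since $p>1$, $f'$ is H\"older continuous and dominated convergence with the partition-based majorant from the previous paragraph (uniform in small ${\bm z}$) yields continuity in ${\bm z}$. For twice-differentiability at $0$, on $\Omega_+$ a further Taylor step gives $E=\tfrac{1}{2}f''(Q)\widetilde{\boldsymbol{\Phi}}_1^2+r({\bm z},x)$ with $|r|\le CQ^{p-3}|\widetilde{\boldsymbol{\Phi}}_1|^3$; pairing the quadratic term with $\varphi_i$ defines an explicit symmetric quadratic form $Q_2({\bm z})$ in $({\bm z},\overline{{\bm z}})$. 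The $r$-contribution on $\Omega_+$ is $o(|{\bm z}|^2)$ thanks to the extra factor $|\widetilde{\boldsymbol{\Phi}}_1|$, while $\Omega_-$, which for small ${\bm z}$ is confined to the exterior region $|x|\gtrsim\log(1/|{\bm z}|)$ where $Q$ is exponentially smaller than $|{\bm z}|$, contributes $o(|{\bm z}|^2)$ under the same integrability analysis. Thus ${\bm b}({\bm z})=Q_2({\bm z})+o(|{\bm z}|^2)$, and $\tilde{{\bm z}}_R=-M^{-1}{\bm b}$ inherits the quadratic expansion at $0$, which is precisely twice-differentiability there.
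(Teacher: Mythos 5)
Your overall architecture matches the paper's: orthogonality is a constant-coefficient linear system in $\tilde{\bm z}_R$ (the paper simply observes the left-hand side of the reformulated system is ${\bm z}$-independent, while you supply the block-structure argument for invertibility of $M$), and the estimates on $\hat{\bm R}$ are obtained by partitioning $\R$ according to $|\widetilde{\boldsymbol{\Phi}}[{\bm z}]_1|\lessgtr Q/2$. Your unified bound $|E|\le C\,Q^{p-2}\widetilde{\boldsymbol{\Phi}}_1^2$ on all of $\R$ is a tidier way to package the paper's Case~1/Case~2 computations (the paper keeps the two regimes separate and uses the Case~2 constraint on $z_2$ more explicitly). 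One small slip: your ``rewriting'' $|\widetilde{\boldsymbol{\Phi}}_1|^p=(|\widetilde{\boldsymbol{\Phi}}_1|/Q)^{2-p}Q^{p-2}\widetilde{\boldsymbol{\Phi}}_1^2$ is not an identity (the right side equals $|\widetilde{\boldsymbol{\Phi}}_1|^{4-p}Q^{2(p-2)}$); what you want is $|\widetilde{\boldsymbol{\Phi}}_1|^p=(Q/|\widetilde{\boldsymbol{\Phi}}_1|)^{2-p}Q^{p-2}\widetilde{\boldsymbol{\Phi}}_1^2$, which together with $Q/|\widetilde{\boldsymbol{\Phi}}_1|<2$ on $\Omega_-$ gives the stated bound. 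The $C^1$ part is fine; using concavity of $t\mapsto t^{p-1}$ to get $|f'(Q+\widetilde{\boldsymbol{\Phi}}_1)-f'(Q)|\le p|\widetilde{\boldsymbol{\Phi}}_1|^{p-1}$ directly is a clean alternative to the paper's two-case estimate.

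The genuine gap is in item~iv). You show that ${\bm b}$ (hence $\tilde{\bm z}_R$) admits a second-order expansion ${\bm b}({\bm z})=Q_2({\bm z})+o(|{\bm z}|^2)$ and then declare that this \emph{is} twice-differentiability. That identification is false in general: a quadratic Peano expansion at a point does not imply that the first derivative is differentiable there (standard counterexample $g(x)=x^3\sin(1/x)$, which satisfies $g(x)=o(x^2)$ yet $g'$ is not differentiable at $0$). The paper's proof of item~iv) works with the first derivative directly, showing $\partial_{z_{aA}}\hat{\bm R}[{\bm z}]-D_{\bm z}\partial_{z_{aA}}\hat{\bm R}[0]\,{\bm z}=o(|{\bm z}|)$ after pairing, which is the genuinely stronger Fr\'echet statement. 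To repair your argument you should differentiate $(f'(\bm\Phi[{\bm z}]_1)-f'(Q))D_{\bm z}\bm\Phi[{\bm z}]_1\bm\zeta$ once more, use the same $\Omega_\pm$ split to bound the difference from its linearization at $0$, and pass to the limit by dominated convergence --- exactly the scheme you already used for item~iii). (It is worth remarking that the only subsequent use of item~iv), in \cref{eq:142bis}, requires precisely the Peano expansion you proved, so the application is unaffected; but as a proof of the lemma as stated, the last step is incomplete.)
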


\begin{proof}
We prove  \Cref{item:rem_better_3}.
  First of   all,  \eqref{R:orth}  is equivalent to
\begin{align}\label{R:orth1} &
 \<   \mathbf{J} D _{{\bm z}}\boldsymbol{\Phi}\pare{\bm z}  \tilde{{\bm z}}_R  , D _{{\bm z}}\boldsymbol{\Phi}\pare{\bm z}   \Theta  \>  =-\< \mathbf{J}\hat{{\bm R}}\pare{\bm z}   , D _{{\bm z}}\boldsymbol{\Phi}\pare{\bm z}   \Theta  \> , \text{     for all   $\Theta \in \C ^2 $  and all $ {\bm z} \in D _{\C^2} (0, \delta _1)$.}
\end{align}
By the definition  of $\boldsymbol{\Phi}\pare{\bm z}$,  varying $\Theta$ in a $\R$--basis of $\C ^2$,
\cref{R:orth1} is a linear system with unknown $ \tilde{{\bm z}}_R$ with constant coefficients. The linear operator acting on $ \tilde{{\bm z}}_R$ is invertible, because the affine space spanned by $\boldsymbol{\Phi}\pare{\bm z}$ is symplectic.  In particular,  varying $\Theta$ in the standard $\R$--basis of $\C ^2$ and utilizing the notation in \cref{eq:rp}, the matrix is, here $z _{a,R}=\Re z _{a}$  and  $z _{a,I}=\Im z _{a}$, for $a=0,2$, \small
\begin{align}\label{symplmatr}  &
\left \{   \<   \mathbf{J} \partial  _{z _{a,A}}\boldsymbol{\Phi}\pare{\bm z}    , \partial  _{z _{b,B}}\boldsymbol{\Phi}\pare{\bm z}      \>   \right \} _{ a,b =0,2 \text{ and } A,B =R,I }   = \begin{pmatrix}
	\< J  \mathbf{Y}_-,\mathbf{Y}_+\> J
		&  0\\
	0
		&  \lambda \| \varphi _2 \| _{L^2 (\R )}^2 J
\end{pmatrix} ,
\end{align}\normalsize
which is obviously invertible since $\< J  \mathbf{Y}_-,\mathbf{Y}_+\> = -2\nu _0 \| \varphi _0\| ^2 _{L^2}$.  This   matrix reappears   later in \Cref{lem:modbound}.

We prove \Cref {item:rem_better_20}.
 From the above  we have
\begin{align}
		\av{ \tilde{{\bm z}}_R ({\bm z})}\lesssim   &\   \int     |I \varphi_0|+ | I\varphi_2|  \dd x
 &&
   \text{  with }
   &&
   I  \defeq    f(Q+ \tilde{\boldsymbol{\Phi}}[{\bm z}]_1)   - f(Q)-f' (Q)  \tilde{\boldsymbol{\Phi}}[{\bm z}]_1     . \label{eq:defI}
\end{align}
Notice that
\begin{align} \label{eq:esttildeph}
  \left |   \tilde{\boldsymbol{\Phi}}[{\bm z}]_1\right | \lesssim |z_1| \ |\varphi _0|  + |z_2| \ |\varphi _2| ,
\end{align}
 with $|\varphi_0|\sim e ^{-\frac{p+1}{2}|x|}$ and $|\varphi_2|\lesssim e ^{-k_2|x|}=e ^{-     \frac{3-p}{2}|x|} $ by \cref{eq:asympt_eigenf}.
  Now we estimate pointwise $I $.

\begin{case}\label{case:one}
Let $|\tilde{\boldsymbol{\Phi}}[{\bm z}]_1| \leq  Q/2$.   By $Q\sim e^{-|x|}$,  $k_2=\frac{3-p}{2} $ and $ \av{f''\pare{Q+\tau \tilde{\bm{\Phi}}\bra{{\bm z}}_1 }} \sim Q^{p-2} $, for $ \tau\in \bra{0, 1} $ we have
\begin{multline}
 |I|  \lesssim  \sup_{\tau\in\bra{0, 1}}\av{f''\pare{Q+\tau \tilde{\bm{\Phi}}\bra{{\bm z}}_1 }} \  \pare{  |z_1|^2 |\varphi_0|^2 + |z_2|^2 |\varphi_2|^2 }   \lesssim  |Q|^{p-2} \pare{ |z_1|^2 |\varphi_0|^2 + |z_2|^2 |\varphi_2|^2 }    \\
  \lesssim  Q ^{p-2} |{\bm z}|^2 e^{-2k_2|x|}\sim  |{\bm z}|^2 e^{|x| \pare{  2-p - (3-p) }  } = |{\bm z}|^2 e^{-|x| } .\label{eq:Ijest}
\end{multline}
\end{case}

\begin{case}\label{case:two}
Let $ |\tilde{\boldsymbol{\Phi}}[{\bm z}]_1| >  Q/2 $.   Then
\begin{align}\label{eq:dicot1}
    |z_1| \ |\varphi_0|  + |z_2| \ |\varphi_2| \gtrsim |\tilde{\boldsymbol{\Phi}}[{\bm z}]_1|\gtrsim Q && \Longrightarrow &&  |z_2| \ |\varphi_2|  \gtrsim Q
\end{align}
  by $|\varphi_0|\sim e ^{-\frac{p+1}{2}|x|}$, $Q\sim e ^{- |x|} $ and $|z_1|\lesssim \delta \ll 1$, which obviously yield  $ |z_1| \ |\varphi_0|\ll Q$  always. Then
\begin{equation}
\begin{aligned}
   |I|\lesssim   \left |  \tilde{\boldsymbol{\Phi}}[{\bm z}]_1  \right  |^p         \lesssim |z_2 \varphi_2| ^p   \lesssim |z_2|^p e ^{- p \frac{3-p}{2} |x|} &=   |z_2|^p   e^{ - (2-p) \frac{ p-1}{2}  |x|}        e ^{ 2^{-1}\left( (2-p)  ( p-1)- p (3-p)    \right)|x|}
    \\&=   |z_2|^p   e^{ - (2-p) \frac{ p-1}{2}  |x|}      e^{-|x| } .  \label{eq:Ijest1}
\end{aligned}
\end{equation}
It is easy to conclude from \eqref{eq:dicot1}, that $|z_2| e ^{-\frac{3-p}{2}|x|} \gtrsim e^{-|x|}$, and so $|z_2|  \gtrsim e^{ -   \frac{ p-1}{2}  |x|}$ or $|z_2| ^{2-p} \gtrsim e^{ - (2-p) \frac{ p-1}{2}  |x|}$. This proves that $ \av{I}\lesssim \av{{\bm z}}^2 e^{-\av{x}} $ when $ |\tilde{\boldsymbol{\Phi}}[{\bm z}]_1| >  Q/2 $.
\end{case}
 Cases \ref{case:one} and \ref{case:two}  yield
\begin{align}
   \av{I}\lesssim      |{\bm z}|^2    e^{-|x| }  \text{ for all $x\in \R$}.  \label{eq:Ijest2}
\end{align}
     {  This yields    \eqref{item:rem_better_1}
and, by dominated convergence,   $\tilde{{\bm z}}_R \in C^0 \pare{ D _{\C^2} (0, \delta _1), \C^2 }$, proving \Cref{item:rem_better_20}.}  \\

We prove \Cref{item:rem_better_1.1}. It suffices to prove that the right hand side in \eqref{R:orth1} is in $  C^1 \pare{ D _{\C^2} (0, \delta _1), \C^2 }$. {  We have using \cref{eq:DzPhi,eq:errp1}}
\begin{equation}
\label{eq:first_differential_hatR}
D_{{\bm z}}\hat{ {\bm R}} \pare{{\bm z}} \ \bm{\zeta} = {  \pare{f'\pare{\bm{\Phi}({\bm z})_1} - f'\pare{Q}} \tilde{\bm \Phi}\bra{\bm \zeta }_1 \ \mathbf{j}}.
\end{equation}
 For $ \av{\tilde{\boldsymbol{\Phi}}[{\bm z}]_1} \ll  Q $ we have
\begin{align}\label{eq:estder1}
  \av{f' (Q)   -  f'(\boldsymbol{\Phi}\pare{\bm z}_1)} \lesssim \left |f'' (Q)   \tilde{\boldsymbol{\Phi}}[{\bm z}]_1 \right |  \lesssim  |{\bm z}| e^{|x| \left( 2-p- \frac{3-p}{2}\right) } =  |{\bm z}| e^{-\frac{p-1}{2}|x|}
\end{align}
while  for  $ |\tilde{\boldsymbol{\Phi}}[{\bm z}]_1|\gtrsim   Q $, by $|z_2|\gtrsim e^{- \frac{p-1}{2} |x|}$, and so, by   $|z_2| ^{2-p} \gtrsim e^{ - (2-p) \frac{ p-1}{2}  |x|}$,
 we have
\begin{align}\label{eq:estder2}
  |f' (Q)   -  f'(\boldsymbol{\Phi}\pare{\bm z}_1)| &\lesssim    |     \tilde{\boldsymbol{\Phi}}[{\bm z}]_1   | ^{p-1} \lesssim |z_2|^{p-1}
   e^{-(3-p)\frac{p-1}{2}|x|} =|z_2|^{p-1}
   e^{-(2-p)\frac{p-1}{2}|x|}    e^{- \frac{p-1}{2}|x|} \\& \lesssim  |{\bm z}| e^{-\frac{p-1}{2}|x|}.\nonumber
\end{align}
So, since the differentiation in \cref{eq:first_differential_hatR} is well defined and is uniformly bounded by an integrable function,
    by dominated convergence   we can commute derivative and integral and conclude
\begin{align*}
 D_{{\bm z}}\< \mathbf{J} \hat{{\bm R}}\pare{\bm z}   , D _{{\bm z}}\boldsymbol{\Phi}\pare{\bm z}   \Theta  \> \bm{\zeta} = \< \mathbf{J} D_{\bm z}\hat{{\bm R}}\pare{\bm z}  \bm{\zeta} , D _{{\bm z}}\boldsymbol{\Phi}\pare{\bm z}   \Theta  \>
\end{align*}
with, furthermore, the differential continuous in ${\bm z}$. So, from  \cref{R:orth1} we obtain
  $\tilde{{\bm z}}_R \in C^1 \pare{  D _{\C^2} (0, \delta _1), \C^2 }$.\\

{  We prove  \Cref{item:rem_better_2}.    From \cref{eq:first_differential_hatR} and the linearity in ${\bm z}$ of $ \tilde{ \bmPhi}\bra{{\bm z} }$ ,
\begin{equation}
 D^2_{\bm z} \hat{\bm R}\pare{\bm z} \pare{{\bm \zeta}^1, {\bm \zeta}^2 }
 =-
 f''\pare{\bmPhi \pare{\bmz}_1}  \tilde{ \bmPhi}\bra{{\bm \zeta}^1}_1  \   \tilde{\bmPhi}\bra{{\bm \zeta}^2}  _1    \mathbf{ j} .\label{2ndderhatr}
\end{equation}
Now, for fixed $A=R,I$, $z_{a R}:=\Re z_a$, $z_{a I}:=\Im z_a$ and $a=1,2$, we have
\begin{align}\label{eq:2diff}& \< \mathbf{J}  \( \partial _{z_{a A}}\hat{{{\bm R}}}\pare{ {\bm z} } -  D_\bmz  \partial _{z_{a A}}\hat{{{\bm R}}}\pare{0}  \bmz   \)    , D _{{\bm z}}\boldsymbol{\Phi}\pare{ {\bm z} }   \Theta  \>   =
- \<   II ,1 \>  \\&  \text{with }II:= \( f'(\boldsymbol{\Phi}\pare{\bm z}_1) - f'(Q)- f''(Q)  \tilde{\boldsymbol{\Phi}}[{\bm z}] _1          \)    \partial _{z_{a A}}\tilde{\boldsymbol{\Phi}}[{\bm z}]_1      \overline{D _{{\bm z}}\boldsymbol{\Phi}\pare{ {\bm z} } _1  \Theta }.\nonumber
\end{align}
To show that the above expression is $o(|{\bm z}|)$ we bound $II$. For   $ |\tilde{\boldsymbol{\Phi}}[{\bm z}]_1|\ll  Q $, we have
\begin{align*} &
  |II| \lesssim  \left |f'''(Q) \right | \  \left | \tilde{\boldsymbol{\Phi}}[{\bm z}] _1   e^{- k_2|x|} \right | ^{2} \lesssim
  |{\bm z}|^{2} e^{ \( 3-p - 2 (3-p)\)   |x| } =  |{\bm z}|^{2} e^{-   ( 3-p   )  |x|  } .
\end{align*}
  For $ \av{\tilde{\boldsymbol{\Phi}}[{\bm z}]_1} \gtrsim  Q $,
like in \eqref{eq:estder2} we have  
\begin{align*}
  |II| &\lesssim     \left | \tilde{\boldsymbol{\Phi}}[{\bm z}] _1   \right | ^{p-1} e^{- 2k_2|x|}\sim   \left | z_2 \psi _2   \right | ^{p-1} e^{- 2k_2|x|} \lesssim   |z_2 | ^{p  }   |z_2 | ^{ -1 } e ^{-\frac{(3-p) ^2}{2}|x|}\\&  \lesssim  | {\bm z} | ^{p  }  e^{  \frac{1}{2}\(  {p-1}   -(3-p) ^2 \) |x|} \le  | {\bm z} | ^{p  } e^{-|x|} .
   \end{align*}
So  we can use dominated convergence obtaining
\begin{align*}
   \lim _{\bm z\to 0}   \frac{1}{\av{\bm z}} \< \mathbf{J}  \( \partial _{z_{a A}}\hat{{{\bm R}}}\pare{\bm z} -  D_\bmz  \partial _{z_{a A}}\hat{{{\bm R}}}\pare{0}  \bmz   \)    , D _{{\bm z}}\boldsymbol{\Phi}\pare{\bm z}   \Theta  \> =    \lim _{\bm z\to 0}   O\(  | {\bm z} | ^{p -1 }\) =0    .
\end{align*}
This shows that  the function in \eqref{eq:2diff} is differentiable in $\bm z=0$ for any $a=1,2$ and any $A=R,I$ or, equivalently, that the right hand side in \eqref{R:orth1}, and so also $\tilde{{\bm z}}_R $, is twice differentiable in $\bm z=0$. }

   \end{proof}

We set now
\begin{align}\label{eq:tildez}& \tilde{\bm z}\defeq  \tilde{{\bm z}}_0 + \tilde{{\bm z}}_R
\end{align}
and we write part of the square component of the Taylor expansion at 0 of $ {\bm R}\pare{\bm z}$ considering
\begin{align}\label{eq:G0}  z_2^2\mathbf{G}_2 (x)+ \overline{z}_2^2\overline{\mathbf{G}}_2 (x) &&  \text{  where }
&&
\mathbf{G}_2:=  D_{\bm z}\boldsymbol{\Phi}\pare{\bm z} \left . \partial ^2_{z_2}\tilde{{\bm z}}_R \right |_{{\bm z}=0} -f''\pare{Q} \varphi _{ 2}^2 \mathbf{j},
\end{align}
where, using the notation above \cref{symplmatr} we  applied $ \partial  _{z_2 } := 2^{-1}\(  \partial _{z_{2R}} - \im  \partial _{z_{2I}}   \)$ and we used  \cref{2ndderhatr} with ${\bm \zeta}^1 = {\bm \zeta}^2 = \( 0,\dfrac{1-\im}{2} \) ^\intercal $ to get
\begin{equation*}
\left .  \partial ^2_{z_2}\hat{\bm R}\pare{\bm z}  \right | _{{\bm z} =0} = -f''\pare{Q} \varphi _{ 2}^2 \mathbf{j} .
\end{equation*}
We remark that $f''\pare{Q} \varphi _2^2 =-p (p-1) Q ^{p-2} \varphi _2^2 \sim  -e^{ -       |x|      }$ for $x\to \infty$.

\begin{rem}
  Notice that   $\left . \partial ^2_{z_2}\tilde{{\bm z}}_R \right |_{{\bm z}=0}$ is well defined
by the double differentiability of $\tilde{{\bm z}}_R$ in ${\bm z}=0$, but its exact formula is immaterial because its contribution to the Fermi Golden Rule is null  and its only purpose is to give
the orthogonality in \eqref{R:orth}, which in turn is quite useful later in \Cref{lem:modbound}. For the sake of completeness we show how to compute $ \tilde{\bm z}_R $ which in turn explicit the expression in \eqref{eq:G0}:
by the definition  of $\boldsymbol{\Phi}\pare{\bm z}$ in  { \cref{eq:rp,eq:DzPhi}}, the left hand side  in \eqref{R:orth1} is constant in ${\bm z}$.  in particular  the l.h.s. of \eqref{R:orth1} is equal to
\begin{footnotesize}
\begin{multline}\label{eq:system_ztildeR}
\<   \mathbf{J} D _{{\bm z}}\boldsymbol{\Phi}\pare{\bm z}  \tilde{{\bm z}}_R  , D _{{\bm z}}\boldsymbol{\Phi}\pare{\bm z}   \Theta  \>
=
 \
\pare{
2\Re\pare{\tilde{z}_{R,1} {\bf J \Phi_0} + \tilde{z}_{R,2} {\bf J \Phi_2}}
\ , \
2\Re\pare{\Theta_1 {\bf  \Phi_0} + \Theta_2 {\bf \Phi_2}}
} \\
= 4\pare{
\Re \tilde{z}_{R, 1}  \frac{{\bf J} \bf Y_+}{2}
+ \Im \tilde{z}_{R, 1}  \frac{{\bf J}\bf Y_-}{2}
- \Re \tilde{z}_{R, 2} {\bf j} \varphi_2
- \Im \tilde{z}_{R, 2}\lambda {\bf i} \varphi_2
\ \middle| \
\Re \Theta_1  \frac{ \bf Y_+}{2}
+ \Im \Theta_1  \frac{\bf Y_-}{2}
+ \Re \Theta_2  {\bf i} \varphi_2
- \Im \Theta_2 \lambda {\bf j} \varphi_2
} .
\end{multline}
\end{footnotesize}
Letting $ {\bf \Theta}\in \set{\pare{1, 0}, \pare{\ii, 0}, \pare{0, 1}, \pare{0, \ii}} $ we obtain (cf. \cref{eq:matr_eig1,eq:errp1,eq:eigC}) 
\begin{equation}
\label{eq:ztilde_expression}
\begin{footnotesize}
\begin{aligned}
\Re \tilde{z}_{R, 1} = & -\frac{1}{2\nu_0 \norm{\varphi_0}_{L^2}^2} \ps{{\bf J}\hat{\bm R} \pare{\bm z} }{\bf Y_-}
&
\Im \tilde{z}_{R, 1} = & \frac{1}{2\nu_0 \norm{\varphi_0}_{L^2}^2} \ps{{\bf J}\hat{\bm R} \pare{\bm z} }{\bf Y_+}
\\
\Re \tilde{z}_{R, 2} = & \frac{1}{4\lambda \norm{\varphi_2}_{L^2}^2} \ps{{\bf J}\hat{\bm R} \pare{\bm z} }{\binom{0}{-2\lambda\varphi_2}}=0
&
\Im \tilde{z}_{R, 2} = & \frac{1}{4\lambda \norm{\varphi_2}_{L^2}^2} \ps{{\bf J}\hat{\bm R} \pare{\bm z} }{\binom{2\varphi_2}{0}}
\end{aligned}
\end{footnotesize} \ .
\end{equation}
\end{rem}

\begin{remark}[Fermi Golden Rule]\label{rem:FGR}
Notice that the existence of a function $\mathbf{g}_{2}\in L^\infty\pare{\bR} $ which is a solution  of $\mathbf{J}\mathbf{L}_0 \mathbf{g}_{2}=\im 2\lambda  \mathbf{g}_{2}$ s.t.
\begin{align}\label{ass:FGR1}
\< \mathbf{J} \mathbf{G}_{2},\mathbf{g}_{2}\> \neq 0,
\end{align}
is equivalent to \Cref{ass:FGRintro} since, $ \mathbf{g}_{2} =   {  ( 1, 2 \im \lambda ) ^ \intercal } \  c  \ g$ with $ L_0g= 4\lambda ^2 g$ and $c\in \C \backslash \{  0\}$ and the definition of $\boldsymbol{\Phi}\pare{\bm z}$  gives a cancellation and  leads  to \begin{align*}
  \< \mathbf{J} \mathbf{G}_{2},\mathbf{g}_{2}\> =  - \overline{c}\ p\ (p-1) \<Q ^{p-2}\varphi _2^2  ,g \> .
\end{align*}
where we used
\begin{align*}
\< \mathbf{J} D_{\bm z}\boldsymbol{\Phi}\pare{\bm z} \boldsymbol{\Theta},\mathbf{g}_{2}\>  = 0, \text{ for any $\boldsymbol{\Theta}\in \C^2$}
\end{align*}
by the well known,  elementary to check by a simple integration by parts, simplectic orthogonality between discrete and continuous modes  of  $\mathbf{J}\mathbf{L}_0$.
\end{remark}
We end this section observing that from \eqref{eq:rp1} and \eqref{eq:errp2} we have
\begin{align}\label{eq:rp2}
	D_{{\bm z}}\boldsymbol{\Phi}\pare{\bm z}\tilde{{\bm z}}= \mathbf{J}
	\begin{pmatrix}
		-\partial_x^2+1 & 0\\ 0 &1
	\end{pmatrix}
	\boldsymbol{\Phi}\pare{\bm z} + f(\boldsymbol{\Phi}\pare{\bm z}_1) \mathbf{j} + {\bm R}\pare{\bm z}.
\end{align}

\section{Main estimates and proof of Theorem \ref{thm:main1}.} \label{sec:modul}

Consider a solution ${\bm u}\in C^0 \pare{ [0,+\infty ) ; \boldsymbol{\mathcal{H}}^{1}  _{\even} } $  gravitating around $Q\mathbf{i}$  like in \eqref{eq:cond_sta}.
From the spectral decomposition of ${\bm u}-Q\mathbf{i}$, we have ${\bm u}=\boldsymbol{\Phi}\pare{\bm z}+\boldsymbol{\eta} $ with   $\boldsymbol{\eta}\in C^0([0,+\infty ) , \boldsymbol{\mathcal{H}}^{1} _{c} )$ and  the NLKG \eqref{NLKG} rewrites  \begin{align}\nonumber
\dot{\boldsymbol{\eta}}+D_{{\bm z}}\boldsymbol{\Phi}\pare{\bm z}(\dot{{\bm z}}-\tilde{{\bm z}})+\xcancel{ D_{{\bm z}}\boldsymbol{\Phi}\pare{\bm z} \tilde{{\bm z}}} =\mathbf{J}
 \mathbf{L}_0\boldsymbol{\eta} +\xcancel{ \mathbf{J}
 \mathbf{L}_0\boldsymbol{\Phi}\pare{\bm z}} + \xcancel{ f(\boldsymbol{\Phi}\pare{\bm z}_1) \mathbf{j}} + \xcancel{{\bm R}\pare{\bm z} }
  - {\bm R}\pare{\bm z} \\   + \underbrace{\left(  f'\pare{ \boldsymbol{\Phi}\pare{\bm z}_1 } - f'(Q) \right) \eta _1 \mathbf{j}  + \left( f\left( \boldsymbol{\Phi}\pare{\bm z}_1 +\eta _1 \right)-f(\boldsymbol{\Phi}\pare{\bm z}_1)- f'(\boldsymbol{\Phi}\pare{\bm z}_1)\eta _1  \right) \mathbf{j}} _{\defeq \bm{F}_Q\pare{ \bmz, \bm{\eta}}}  ,\label{eq:modeq}
\end{align}
for ${\bm R}\pare{\bm z}$ defined in \eqref{eq:errp1} and \eqref{eq:errp2} and where the canceling follows from \eqref{eq:errp1}.
Equivalently, we have also
\begin{equation}\label{nuovaequi}
	\dot{\boldsymbol{\eta}}+D_{{\bm z}}\boldsymbol{\Phi}\pare{\bm z}(\dot{{\bm z}}-\tilde{{\bm z}})
	=\mathbf{J}
	\begin{pmatrix}
		-\partial_{x}^2+ 1 &0\\0&1
	\end{pmatrix}\boldsymbol{\eta}
	- {\bm R}\pare{\bm z}
	 + \left( f\left( \boldsymbol{\Phi}\pare{\bm z}_1 +\eta _1 \right)-f(\boldsymbol{\Phi}\pare{\bm z}_1) \right) \mathbf{j}.
\end{equation}
 In the sequel we will consider constants  $A, B,\varepsilon >0$ satisfying
 \begin{align}\label{eq:relABg}
\log(\delta ^{-1})\gg\log\pare{ \epsilon ^{-1} } \gg  A\gg    B^2\gg B \gg  \exp \left( \varepsilon ^{-1} \right) \gg 1.
 \end{align}
We will fix $A,B$ and $\varepsilon$ satisfying the above relation always retaking $A,B$ larger and $\varepsilon$ smaller if necessary.
Then, we will take $\epsilon$ sufficiently small and finally chose $\delta$ sufficiently small.
 Exploiting  the double differentiability of $\tilde{{\bm z}}_R$ in ${\bm z}=0$, the choices of the constants are made so that
\begin{align}\label{eq:142bis}
   A  \av{\tilde \bmz _R\pare{\bmz} - \frac{D_\bmz^2 \tilde\bmz_R\pare{0}\pare{\bmz, \bmz}}{2} } =  o\pare{ \delta ^2 }  \text{  for } \av{{\bm z}} \lesssim \delta.
\end{align}
   We will denote by    $o_{\alpha}(1)$  constants depending on $\alpha >0$ such that
 \begin{align}\label{eq:smallo}
o_{\alpha}(1) \xrightarrow {\alpha  \to 0^+   }0.
 \end{align}
 We will consider the norms
\begin{align}\label{eq:normA}&
\norm{ \boldsymbol{\eta} }_{ \boldsymbol{ \Sigma }_A} \defeq\left \| \sech \pare{\frac{2}{A} x} \eta_1'\right \|_{L^2} +A^{-1}\left \|    \sech \pare{\frac{2}{A} x} \boldsymbol{\eta}\right\|_{L^2}  \\
&  \norm{ \boldsymbol{\eta} }_{ \boldsymbol{L }^2_{-\kappa} } \defeq\left \| \sech \pare{ \kappa  x}  \boldsymbol{\eta}\right \|_{L^2}.\label{eq:normk}
\end{align}
We will prove the following  continuation argument.
 \begin{proposition}\label{prop:contreform}   Under the Assumption  \ref{rem:FGR},
  for any small $\epsilon>0 $
there exists  a    $\delta _0 \defeq \delta _0(\epsilon )  \ll \epsilon $ such that if a solution of \eqref{NLKG} satisfying \eqref{eq:cond_sta},
if   in  $I=[0,T]$ we have
\begin{align}&
\|\dot {{\bm z}} - \tilde{{\bm z}}\|_{L^2(I)}^2+\| {z} _1\|_{L^2(I)}^2 +\| {z} _2\|_{L^4(I)} ^4+ \|   \boldsymbol{\eta}  \|_{L^2(I, \boldsymbol{ \Sigma }_A  \cap   \boldsymbol{L }^2_{-\kappa})}^2\le   \epsilon ^2  \label{eq:main11}
\end{align}
then  for $\delta  \in (0, \delta _0)$
     inequality   \eqref{eq:main11} holds   for   $\epsilon^2$ replaced by $ o_{\varepsilon}(1)   \epsilon ^2$    where $o_{\varepsilon}(1) \xrightarrow {\varepsilon  \to 0^+   }0 $.
\end{proposition}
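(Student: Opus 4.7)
The plan is to close the bootstrap \eqref{eq:main11} by combining four ingredients: modulation equations for $\dot{\bm z}-\tilde{\bm z}$ derived from the orthogonality \eqref{eq:cont_mod} and Lemma \ref{lem:rpcorr}; two virial estimates for $\boldsymbol{\eta}$, one on scale $A$ acting directly on \eqref{nuovaequi}, and one on scale $B$ after Darboux-conjugating via $\mathcal{A}$; the trapping of the unstable mode $z_1$ provided by the center manifold of Theorem \ref{thm:main2}; and a Fermi Golden Rule normal-form computation extracting the $|z_2|^4$ damping from Assumption \ref{ass:FGR}. The hierarchy \eqref{eq:relABg} together with the second-differentiability refinement \eqref{eq:142bis} governs how the resulting source terms can be absorbed.

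First I would derive equations for the modulation parameters by pairing \eqref{nuovaequi} with $D_{\bm z}\boldsymbol{\Phi}[{\bm z}]\Theta$ for a basis of $\C^2$, invoking the orthogonality \eqref{eq:cont_mod} and its time derivative, the identity \eqref{R:orth}, and the spectral relations \eqref{eq:matr_eig12}, \eqref{eq:matr_eig3}. This produces a non-degenerate linear system in $\dot{\bm z}-\tilde{\bm z}$ whose forcing is $\langle \mathbf{J}\bm{F}_Q({\bm z},\boldsymbol{\eta}), D_{\bm z}\boldsymbol{\Phi}[{\bm z}]\Theta\rangle$. Since $D_{\bm z}\boldsymbol{\Phi}[{\bm z}]$ is exponentially localized, the pointwise estimate $|\dot{\bm z}-\tilde{\bm z}| \lesssim \|\boldsymbol{\eta}\|_{\boldsymbol{L}^2_{-\kappa}}^2 + |{\bm z}|\,\|\boldsymbol{\eta}\|_{\boldsymbol{L}^2_{-\kappa}}$ follows, and squaring and integrating yields $\|\dot{\bm z}-\tilde{\bm z}\|_{L^2(I)}^2 \lesssim o_\varepsilon(1)\epsilon^2$ under the bootstrap. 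Next, a virial functional of the form $\mathcal{I}_A[\boldsymbol{\eta}] = \langle \psi_A \eta_1',\eta_2\rangle + (\text{curvature correction})$ with $\psi_A(x)=A\tanh(x/A)$, differentiated along \eqref{nuovaequi}, produces the positive quadratic form $\|\boldsymbol{\eta}\|^2_{\boldsymbol{\Sigma}_A}$ minus a residual involving the potential $-pQ^{p-1}\eta_1^2$ that is not controlled on its own. The fix is the transported variable $\boldsymbol{\xi} \defeq \mathcal{A}\boldsymbol{\eta}$, which by \eqref{eq:DarConj2} solves a linear equation driven by $\mathbf{L}_{N+1}$: for $2>p>5/3$ the repulsivity \eqref{eq:repuls} of $L_4$ makes a Morawetz-type virial on scale $B$ strictly coercive on $(\xi_1',\xi_1,\xi_2)$; for $p=2$ the vanishing of the potential in $L_3$ is harmless because $\boldsymbol{\xi}$ inherits the evenness of $\boldsymbol{\eta}$. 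Interpolating the two virials and absorbing nonlinear residues via the spacetime partition of Lemma \ref{lem:rpcorr} gives $\|\boldsymbol{\eta}\|_{L^2(I,\boldsymbol{\Sigma}_A\cap\boldsymbol{L}^2_{-\kappa})}^2 \lesssim o_\varepsilon(1)\epsilon^2 + C(\|z_1\|_{L^2(I)}^2 + \|z_2\|_{L^4(I)}^4)$.

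The bound $\|z_1\|_{L^2(I)}^2 \lesssim o_\varepsilon(1)\epsilon^2$ is forced by the orthogonality to $\mathbf{Z}_+$ on the center manifold of Theorem \ref{thm:main2}, which cancels the unstable $\nu_0$-mode in the modulation equation and couples $z_1$ to the virial-controlled quantities. For $\|z_2\|_{L^4(I)}^4$ I would perform a normal form that removes the resonant quadratic term $z_2^2 \mathbf{G}_2$ from \eqref{eq:G0} via a substitution $z_2 \mapsto \zeta_2$ built from the $2\lambda$-distorted plane wave $\mathbf{g}_2$; the standard computation then turns $\partial_t|\zeta_2|^2$ into $-2\gamma|\zeta_2|^4$ plus remainders controllable by the previous virial and modulation estimates, yielding $\|z_2\|_{L^4(I)}^4 \lesssim \gamma^{-1}(\delta^2 + o_\varepsilon(1)\epsilon^2)$ by Assumption \ref{ass:FGR}.

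The main obstacle I expect is the interplay between the low regularity of $f$ (only $C^{1,p-1}$ for $p<2$) and the slow decay of $\varphi_2$ at rate $k_2=(3-p)/2$: every cubic and higher residue arising in the modulation, virial, and normal-form computations must be controlled by the same spacetime partition $|\tilde{\boldsymbol{\Phi}}[{\bm z}]_1|\lessgtr Q/2$ used in Lemma \ref{lem:rpcorr}, balancing the $Q^{p-2}$ singularity of $f''$ against the exponential decay of $\varphi_2$ exactly as in \eqref{eq:Ijest2}. Upgrading these error bounds from $O(\epsilon^2)$ to the required $o_\varepsilon(1)\epsilon^2$ is what forces the use of the twice-differentiability refinement \eqref{eq:142bis} together with the parameter ordering \eqref{eq:relABg}, so that every small parameter in the hierarchy can be tuned to swallow the loss coming from the non-smooth nonlinearity.
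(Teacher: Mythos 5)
Your high-level architecture matches the paper's exactly: Proposition \ref{prop:contreform} is shown to follow from the four Propositions \ref{prop:modp}, \ref{prop:FGR}, \ref{prop:1stvirial} and \ref{prop:2ndvirial}, and you correctly identify modulation control, two coupled virial estimates at scales $A$ and $B$ with a Darboux passage to the repulsive operator $L_{N+1}$, and a Fermi Golden Rule step for $|z_2|^4$ as the four ingredients, and you correctly anticipate the role of the hierarchy \eqref{eq:relABg}, the twice-differentiability refinement \eqref{eq:142bis}, and the spacetime partition behind \eqref{eq:Ijest2}. However, there is a genuine gap in your second virial step: you propose the conjugated variable $\boldsymbol{\xi} = \mathcal{A}\boldsymbol{\eta}$, but $\mathcal{A} = S_0\cdots S_N$ is a differential operator of order $N+1$ (order $4$ when $\tfrac{5}{3}<p<2$, order $3$ when $p=2$), while $\boldsymbol{\eta}$ lives only in $\boldsymbol{\mathcal{H}}^1$; $\mathcal{A}\boldsymbol{\eta}$ therefore does not belong to $L^2$ and the virial functional at scale $B$ cannot even be written down for it. The paper circumvents this by instead using the regularized transform $\mathcal{T}=\langle i\varepsilon\partial_x\rangle^{-(1+N)}\mathcal{A}^\ast$ of \eqref{def:Tg} and $\bm v = \mathcal{T}\boldsymbol{\eta}$, where the Fourier mollifier absorbs exactly the derivative loss of $\mathcal{A}^\ast$; the price is the commutator $[\langle i\varepsilon\partial_x\rangle^{-(1+N)},V_{N+1}]$ appearing in \eqref{eq:vBg}, which must then be controlled via Lemma \ref{lem:KM2} and contributes an $O(\varepsilon)$ term that can be absorbed precisely because $\varepsilon$ sits at the bottom of the hierarchy \eqref{eq:relABg}, and the inverse bound \eqref{eq:Tinverse}--\eqref{eq:coer6A1} is what lets one pass back from $\bm v$ to $\boldsymbol{\eta}$ in \eqref{eq:key1}. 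Without this regularization and its attendant commutator bookkeeping, the second virial estimate does not close.

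Two smaller remarks. First, your pointwise modulation bound $|\dot{\bm z}-\tilde{\bm z}|\lesssim \|\boldsymbol{\eta}\|^2_{\boldsymbol{L}^2_{-\kappa}}+|{\bm z}|\,\|\boldsymbol{\eta}\|_{\boldsymbol{L}^2_{-\kappa}}$ would close only by also invoking $\|\boldsymbol{\eta}\|_{L^\infty_t L^2}\lesssim\delta$; the paper instead proves the linear bound $|\dot{\bm z}-\tilde{\bm z}|\lesssim\delta^{p-1}\|\sech(\kappa x)\boldsymbol{\eta}\|_{L^2}$ in Lemma \ref{lem:modbound}, extracting the smallness from the factor $f'(\boldsymbol{\Phi}[{\bm z}]_1)-f'(Q)$ rather than from an extra power of $\boldsymbol{\eta}$, which is the route that survives the low regularity $p<2$. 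Second, the $\|z_1\|_{L^2(I)}^2$ estimate in the paper is not an algebraic consequence of the $\mathbf{Z}_+$-orthogonality of the center manifold; it is an ODE growth argument (Lemma \ref{lem:esz1L4}) exploiting the sign $\nu_0>0$ in $\tfrac{d}{dt}\Re z_1^2 = 2\nu_0|z_1|^2+\cdots$, valid because \eqref{eq:cond_sta} is assumed. Your normal-form version of the FGR for $z_2$ is a legitimate alternative to the paper's Lyapunov functional $\mathcal{J}_{\mathrm{FGR}}$ of \eqref{eq:FGRfunctional}; both are standard realizations of the Kowalczyk--Martel scheme and would yield the damping provided the residues are again controlled by the same $\bm v$-machinery.
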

It is elementary that     Proposition \ref{prop:contreform} follows  from the following
  \Cref{prop:modp,prop:FGR,prop:1stvirial,prop:2ndvirial}, where we always assume that our solution satisfies the stability condition \eqref{eq:cond_sta}.

\begin{proposition}\label{prop:modp}
 Under the assumptions of \Cref{prop:contreform} there exists a $ \kappa > 0 $ such that
\begin{align}
\|\dot{{\bm z}}-\tilde{{\bm z}}\|_{L^2(I)}^2=
\delta^{2(p-1)} \norm{  \boldsymbol{\eta}  }_{L^2 \pare{ I ;   \boldsymbol{L }^2_{-\kappa} } }^2      . \label{eq:lem:estdtz}
\end{align}
\end{proposition}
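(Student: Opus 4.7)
The plan is to differentiate in time the orthogonality conditions built into $\boldsymbol{\eta}(t) \in \boldsymbol{\mathcal{H}}^1_c$ and substitute \eqref{nuovaequi}, so as to obtain a non-degenerate linear system for $\dot{{\bm z}}-\tilde{{\bm z}}$. Since $\boldsymbol{\Phi}[{\bm z}]$ is affine in ${\bm z}$, $D_{{\bm z}}\boldsymbol{\Phi}[{\bm z}]$ is constant and the constraint in \eqref{eq:cont_mod} reduces to $\langle \mathbf{J}\boldsymbol{\eta},\boldsymbol{\Phi}_j\rangle=0$ for $j=0,2$ (the companion conditions with $\overline{\boldsymbol{\Phi}}_j$ follow from the realness of $\boldsymbol{\eta}$). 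Differentiating these and testing \eqref{nuovaequi} via $\langle \mathbf{J}\,\cdot\,,\boldsymbol{\Phi}_j\rangle$ produces two complex equations for $\dot{{\bm z}}-\tilde{{\bm z}}\in\C^2$, with coefficient map $\boldsymbol{\Theta}\mapsto \bigl(\langle\mathbf{J}D_{{\bm z}}\boldsymbol{\Phi}[{\bm z}]\boldsymbol{\Theta},\boldsymbol{\Phi}_j\rangle\bigr)_{j=0,2}$ invertible by non-degeneracy of the symplectic form on the discrete-spectrum subspace.

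On the right-hand side, the ${\bm R}[{\bm z}]$ contribution vanishes by \eqref{R:orth}. For the linear term, decompose $\begin{pmatrix}-\partial_x^2+1 & 0 \\ 0 & 1\end{pmatrix}\boldsymbol{\Phi}_j = \mathbf{L}_0\boldsymbol{\Phi}_j + pQ^{p-1}\boldsymbol{\Phi}_{j,1}\,\mathbf{i}$, where $\boldsymbol{\Phi}_{j,1}$ denotes the first component. Using self-adjointness of $\mathbf{L}_0$, the identity $\mathbf{L}_0 = -\mathbf{J}\cdot\mathbf{J}\mathbf{L}_0$ and the eigenrelations \eqref{eq:matr_eig12}, \eqref{eq:matr_eig3}, one finds $\langle\boldsymbol{\eta},\mathbf{L}_0\boldsymbol{\Phi}_j\rangle=\langle\mathbf{J}\boldsymbol{\eta},\mathbf{J}\mathbf{L}_0\boldsymbol{\Phi}_j\rangle$ with $\mathbf{J}\mathbf{L}_0\boldsymbol{\Phi}_j$ in the span of $\boldsymbol{\Phi}_0,\overline{\boldsymbol{\Phi}}_0,\boldsymbol{\Phi}_2,\overline{\boldsymbol{\Phi}}_2$, hence this piece vanishes by the orthogonality constraint. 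Using $\mathbf{J}\mathbf{j}=\mathbf{i}$ on the nonlinear contribution and the cancellation $f'(Q)=pQ^{p-1}$, the tested equation reduces to
\[
\bigl\langle \mathbf{J}D_{{\bm z}}\boldsymbol{\Phi}[{\bm z}](\dot{{\bm z}}-\tilde{{\bm z}}),\boldsymbol{\Phi}_j\bigr\rangle = \int_{\R}\bigl[\, f(\boldsymbol{\Phi}[{\bm z}]_1+\eta_1) - f(\boldsymbol{\Phi}[{\bm z}]_1) - f'(Q)\eta_1 \,\bigr]\,\overline{\boldsymbol{\Phi}_{j,1}}\,dx.
\]

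I would then estimate the integrand by splitting it as $\bigl[f(\boldsymbol{\Phi}_1+\eta_1)-f(\boldsymbol{\Phi}_1)-f'(\boldsymbol{\Phi}_1)\eta_1\bigr] + \bigl[f'(\boldsymbol{\Phi}_1)-f'(Q)\bigr]\eta_1$. For $p\in (1,2]$, the H\"older-continuity of $t\mapsto |t|^{p-1}$ gives $|f'(a)-f'(c)|\lesssim |a-c|^{p-1}$ and $|f(a+b)-f(a)-f'(a)b|\lesssim |b|^{p}$, so the integrand is pointwise bounded by $|\eta_1|^p|\boldsymbol{\Phi}_{j,1}| + |\widetilde{\boldsymbol{\Phi}}[{\bm z}]_1|^{p-1}|\eta_1||\boldsymbol{\Phi}_{j,1}|$. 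Absorbing $|\eta_1|^{p-1}\le \|\eta_1\|_{L^\infty}^{p-1}\lesssim \delta^{p-1}$ via the Sobolev embedding $H^1\hookrightarrow L^\infty$, and using $|\widetilde{\boldsymbol{\Phi}}[{\bm z}]_1|^{p-1}\lesssim |{\bm z}|^{p-1}(|\varphi_0|+|\varphi_2|)^{p-1}$ together with the exponential decay of $\varphi_j$ from \eqref{eq:asympt_eigenf}, the right-hand side is bounded by $\delta^{p-1}\|\boldsymbol{\eta}\|_{\boldsymbol{L}^2_{-\kappa}}$ for sufficiently small $\kappa>0$. Inverting the symplectic matrix yields $|\dot{{\bm z}}(t)-\tilde{{\bm z}}(t)|\lesssim \delta^{p-1}\|\boldsymbol{\eta}(t)\|_{\boldsymbol{L}^2_{-\kappa}}$ pointwise in $t$, and squaring and integrating over $I$ gives \eqref{eq:lem:estdtz}.

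The principal technical difficulty is the low regularity of $f$ when $p\le 2$: $f'(u)=p|u|^{p-1}\operatorname{sgn}(u)$ is only $(p-1)$-H\"older, so the Taylor expansion familiar from the smooth case must be replaced by the H\"older-type pointwise estimates above. One must then verify, in the spirit of the case analysis of \Cref{lem:rpcorr}, that after multiplication by $\overline{\boldsymbol{\Phi}_{j,1}}$ the resulting integrands retain enough spatial decay to be controlled by $\|\boldsymbol{\eta}\|_{\boldsymbol{L}^2_{-\kappa}}$, so that the prefactor $\delta^{p-1}$ (and not a weaker one) can be extracted.
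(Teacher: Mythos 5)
Your proposal is correct and follows essentially the same route as the paper, which factors the argument through \Cref{lem:modbound} (pairing \eqref{eq:modeq} against $D_{\bm z}\boldsymbol{\Phi}[{\bm z}]\Theta$, killing the $\dot{\boldsymbol{\eta}}$ term by orthogonality, the ${\bm R}[{\bm z}]$ term by \eqref{R:orth}, and the $\mathbf{J}\mathbf{L}_0\boldsymbol{\eta}$ term by the eigenrelations) and \Cref{lem:estF} (the H\"older-type pointwise bounds $|f'(a)-f'(c)|\lesssim |a-c|^{p-1}$ and $|f(a+b)-f(a)-f'(a)b|\lesssim |b|^p$). Starting from \eqref{nuovaequi} rather than \eqref{eq:modeq} means you re-split the free operator into $\mathbf{L}_0 + pQ^{p-1}\operatorname{diag}(1,0)$ by hand and re-absorb the $f'(Q)\eta_1$ piece, but after that bookkeeping you arrive at the same remainder $\bm{F}_Q$ and the same $\delta^{p-1}$-prefactored estimate, so the content is identical.
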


 \begin{proposition}[Radiation Damping] \label{prop:FGR}
We have \begin{align}\label{eq:FGRint}
 \norm{ {z} _1}_{L^2(I)}^2 +\norm{ {z} _2 }_{L^4(I)} ^4=  o(\epsilon ^2) .
 \end{align}
 \end{proposition}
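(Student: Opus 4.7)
The plan is to derive modulation ODEs for $\bmz$ from \eqref{eq:modeq}, pair them against the symplectic duals $\mathbf{J}D_{\bmz}\bmPhi[\bmz]\Theta$ (which kill $\boldsymbol{\eta}$ via the orthogonality \eqref{eq:cont_mod}), and split the analysis into the hyperbolic mode $z_1$ (handled by a Lyapunov–Perron argument using the stable center hypersurface $\mathcal{N}$) and the neutral mode $z_2$ (handled by a Fermi Golden Rule normal form). Combining \Cref{lem:rpcorr}, \Cref{prop:modp} and \eqref{eq:cont_mod}, one should extract the scalar equations
\begin{align*}
\dot z_1 &= \nu_0 \bar z_1 + r_1(\bmz,\boldsymbol{\eta}),\\
\dot z_2 &= \ii\lambda z_2 + r_2(\bmz,\boldsymbol{\eta}),
\end{align*}
where the remainders $r_j$ are controlled in $L^2(I)$ by $\|\dot{\bmz}-\tilde{\bmz}\|_{L^2(I)}$, by $\|\boldsymbol{\eta}\|_{L^2(I,\boldsymbol{\Sigma}_A\cap\boldsymbol{L}^2_{-\kappa})}$, and by higher order polynomial contributions of $|\bmz|$; each of these is $o(\epsilon)$ under \eqref{eq:main11} and the hierarchy \eqref{eq:relABg}.

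For $z_1$, writing $z_1=a_1+\ii b_1$ with $a_1,b_1$ real, the linear part $\nu_0\bar z_1$ separates into the unstable equation $\dot a_1=\nu_0 a_1+\Re r_1$ and the stable equation $\dot b_1=-\nu_0 b_1+\Im r_1$. By \Cref{thm:main2} together with \eqref{eq:cond_sta}, the trajectory lies on $\mathcal{N}$ for all $t\ge 0$, which forces boundedness of $a_1$ at $+\infty$ and hence the Duhamel representation
$$a_1(t)=-\int_t^\infty e^{\nu_0(t-s)}\,\Re r_1(s)\,\dd s.$$
Young's convolution inequality yields $\|a_1\|_{L^2(I)}\lesssim \nu_0^{-1}\|r_1\|_{L^2(I)}$, while the stable component satisfies $\|b_1\|_{L^2(I)}\lesssim |b_1(0)|+\|\Im r_1\|_{L^2(I)}$; together these give $\|z_1\|_{L^2(I)}^2=o(\epsilon^2)$.

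For $z_2$, which is the main obstacle, I would perform a normal form change of variables $\boldsymbol{\xi}:=\boldsymbol{\eta}-z_2^2 \mathbf{h}_2-\bar z_2^2 \bar{\mathbf{h}}_2$, where $\mathbf{h}_2$ is the outgoing (generically non $L^2$) solution of $(\mathbf{J}\mathbf{L}_0-2\ii\lambda)\mathbf{h}_2=\mathbf{G}_2$ picked out by the limiting absorption principle at the resonant energy $(2\lambda)^2$ sitting inside the continuous spectrum of $L_0$. This substitution removes the quadratic resonant forcing from \eqref{eq:modeq}. After computing $\ddt|z_2|^2$ and testing the transformed $\boldsymbol{\xi}$-equation against $\mathbf{g}_2$, \Cref{ass:FGR} delivers
$$\ddt|z_2|^2=-\gamma|z_2|^4+\mathcal{R}(t),$$
with $\mathcal{R}\in L^1(I)$ and $\|\mathcal{R}\|_{L^1(I)}=o(\epsilon^2)$ by the localized smallness of $\boldsymbol{\eta}$ and the control of $r_2$ provided by \eqref{eq:main11}. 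Integrating on $I=[0,T]$ and using $|z_2|^2\lesssim\delta^2=o(\epsilon^2)$ at the endpoints gives $\gamma\|z_2\|_{L^4(I)}^4=o(\epsilon^2)$.

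The hard part is justifying the normal form and controlling $\mathcal{R}$ despite the low regularity of $f(u)=|u|^{p-1}u$ when $p$ is near $5/3$. Constructing $\mathbf{h}_2$ requires sharp distorted-Fourier analysis of $\mathbf{J}\mathbf{L}_0$ at energy $(2\lambda)^2$ and the absence of a threshold obstruction for even data (the role of \eqref{def:valueN} and the discussion following \eqref{eq:repuls}). Controlling $\mathcal{R}$ demands precisely the twice differentiability of $\tilde{\bmz}_R$ at $\bmz=0$ furnished by \Cref{item:rem_better_2} of \Cref{lem:rpcorr} and the auxiliary condition \eqref{eq:142bis}; without it the $|z_2|^4$ contribution cannot be cleanly separated from formal $O(|\bmz|^3)$ remainders coming from the Taylor expansion of $\hat{\bm R}[\bmz]$. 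The case partition $|\widetilde{\bmPhi}[\bmz]_1|\le Q/2$ versus $|\widetilde{\bmPhi}[\bmz]_1|>Q/2$ developed in the proof of \Cref{lem:rpcorr} will reappear when estimating the nonsmooth contributions of $\bm{F}_Q(\bmz,\boldsymbol{\eta})$ to $\mathcal{R}$.
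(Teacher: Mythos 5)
Your two-pronged strategy (hyperbolic dichotomy for $z_1$, Fermi Golden Rule damping for $z_2$) matches the paper's goal, but the technical route you propose for $z_2$ is genuinely different from the paper's and, as written, has a real gap; the route for $z_1$ is a valid alternative, though less elementary than what the paper does.

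For $z_1$, the paper does not use a Duhamel/Lyapunov--Perron representation. Instead, \Cref{lem:esz1L4} differentiates the scalar functional $\Re z_1^2$ and uses $\frac{d}{dt}\Re z_1^2 = 2\nu_0|z_1|^2 + 2\Re\bigl(z_1(\dot z_1-(\tilde{\bm z})_1)\bigr) + 2\Re\bigl(z_1(\tilde{\bm z}_R)_1\bigr)$, absorbing the errors by Young's inequality and integrating the boundary term $[\Re z_1^2]_0^T \lesssim \delta^2$. Your split into $a_1,b_1$ and the Duhamel formula from $t$ to $+\infty$ would also work (with the caveat that the bootstrap interval is $[0,T]$, so one should integrate to $T$ and use $|a_1(T)|\lesssim\delta$ rather than integrating to $\infty$), but it buys nothing here: the paper's monotonicity argument is shorter and requires only \eqref{eq:cond_sta}, not the stable-manifold structure of $\mathcal N$. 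Also note a coupling you should make explicit: your $z_1$ bound involves $\|z_2\|_{L^4}^4$ and your $z_2$ bound involves $\delta^2\|z_1\|_{L^2}^2$, so the two estimates must be closed together; the paper integrates the $z_2$ inequality \eqref{eq:lem:FGR11} first (using only the bootstrap $\|z_1\|_{L^2}\le\epsilon$ and $\delta\ll\epsilon$), then feeds the improved $\|z_2\|_{L^4}^4$ into the $z_1$ bound.

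For $z_2$, the gap is substantive. You propose the classical normal form of Soffer--Weinstein/Buslaev--Perelman type: build the outgoing solution $\mathbf h_2$ of $(\mathbf J\mathbf L_0 - 2\ii\lambda)\mathbf h_2 = \mathbf G_2$ via limiting absorption, shift $\boldsymbol\eta\mapsto\boldsymbol\xi = \boldsymbol\eta - z_2^2\mathbf h_2-\bar z_2^2\bar{\mathbf h}_2$, and then pair against $\mathbf g_2$. Two difficulties are not addressed. First, $\mathbf h_2$ and $\mathbf g_2$ are bounded but not in $L^2$, so both the modified variable $\boldsymbol\xi$ and the pairing $\langle\boldsymbol\xi,\mathbf g_2\rangle$ are not a priori defined in the weighted-$L^2$ framework in which $\boldsymbol\eta$ is controlled (recall that here one has only $\|\boldsymbol\eta\|_{L^2(I,\boldsymbol\Sigma_A\cap\boldsymbol L^2_{-\kappa})}$, no dispersive decay). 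Second, constructing and estimating $\mathbf h_2$ is precisely the spectral work (LAP, distorted Fourier transform at energy $(2\lambda)^2$) that the paper's framework is designed to avoid. What the paper actually does, following Kowalczyk--Martel \cite{KM22}, is to work with the localized functional
$\mathcal J_{\mathrm{FGR}} = \langle\mathbf J\boldsymbol\eta, \chi_A(z_2^2\mathbf g_2 + \bar z_2^2\bar{\mathbf g}_2)\rangle$,
using only the generalized eigenfunction $\mathbf g_2$ multiplied by the cutoff $\chi_A$: the pairing is then trivially finite, the relation $\mathbf J\mathbf L_0\mathbf g_2 = 2\ii\lambda\mathbf g_2$ produces the cancellation, the commutator $[\mathbf L_0,\chi_A]$ error is absorbed by $A^{-1/2}\|\boldsymbol\eta\|_{\boldsymbol\Sigma_A}^2$, and the FGR coefficient $\gamma = \langle\mathbf J\mathbf G_2,\mathbf g_2\rangle$ appears directly upon Taylor-expanding $\mathbf R[\bmz]$ at second order (this is where \Cref{item:rem_better_2} and \eqref{eq:142bis} enter, as you correctly anticipated). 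The cutoff is not a cosmetic device: it is the mechanism by which one extracts the monotonicity $|z_2|^4\lesssim -\dot{\mathcal J}_{\mathrm{FGR}} + \dots$ without ever constructing $\mathbf h_2$. If you insist on the normal form route you would need to localize $\mathbf h_2$ as well (say multiply by $\chi_A$) and control the mismatch terms, at which point you have effectively reconstructed the paper's argument with extra overhead.

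Finally, a small but important point: the quantities $\|\dot{\bmz}-\tilde{\bmz}\|_{L^2(I)}$ and $\|\boldsymbol\eta\|_{L^2(I,\dots)}$ are, under the bootstrap \eqref{eq:main11}, only $\lesssim\epsilon$, not $o(\epsilon)$. The gain to $o_\varepsilon(1)\epsilon^2$ in \eqref{eq:lem:FGR11} comes from the explicit small prefactors $\delta^2$ and $A^{-1/2}$ together with the hierarchy \eqref{eq:relABg}; you mention the hierarchy but should track where these prefactors come from, since without them the bound does not improve.
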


\begin{proposition}[1st virial estimate]\label{prop:1stvirial}
We have
\begin{align}
 \|   \boldsymbol{\eta}  \|_{L^2(I, \boldsymbol{ \Sigma }_A  )}^2  \lesssim \delta^2 +  \|  \boldsymbol{\eta}  \|_{L^2(I,  \boldsymbol{L }^2_{-\kappa})}^2  + \| \mathbf{{z} } \|_{L^4(I)} ^4.\label{eq:1stvInt}
\end{align}

\end{proposition}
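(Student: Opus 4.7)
The plan is a virial identity at spatial scale $A$. Introduce the virial functional
$$
\mathcal{I}_A(t) := \bigl\langle \psi_A\,\partial_x\eta_1 + \tfrac12 \psi_A'\,\eta_1,\ \eta_2 \bigr\rangle_{L^2},
$$
where $\psi_A:\R\to\R$ is a smooth odd bounded primitive of a profile comparable to $\operatorname{sech}^2(2x/A)$ at scale $A$, so $|\psi_A|\lesssim A$ and $|\psi_A^{(k)}|\lesssim A^{1-k}$ for $k\ge 1$. Since $\|\boldsymbol{\eta}(t)\|_{\boldsymbol{\mathcal{H}}^1}\lesssim\delta$ by \eqref{eq:cond_sta}, one has the uniform boundary bound $|\mathcal{I}_A(t)|\lesssim A\delta^2$, which accounts for the $\delta^2$ term on the right-hand side of \eqref{eq:1stvInt} once $\mathcal{I}_A$ is integrated between $t=0$ and $t=T$.

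Differentiating $\mathcal{I}_A$ along \eqref{nuovaequi}, whose linear part reads $\dot\eta_1=\eta_2$, $\dot\eta_2=\partial_x^2\eta_1-\eta_1$, two integrations by parts produce, up to boundary terms that vanish at infinity,
$$
\dot{\mathcal{I}}_A^{\rm lin}=-\int \psi_A'\bigl[(\partial_x\eta_1)^2+\eta_1^2+\eta_2^2\bigr]\,dx+\tfrac14\!\int \psi_A'''\,\eta_1^2\,dx.
$$
The first integrand, $\sim\operatorname{sech}^2(2x/A)$, controls from below $\|\operatorname{sech}(2x/A)\eta_1'\|_{L^2}^2$ directly, while after a Hardy-type inequality tailored to the weight it also controls a multiple of $A^{-2}\|\operatorname{sech}(2x/A)\boldsymbol{\eta}\|_{L^2}^2$, giving the full $\|\boldsymbol{\eta}\|_{\boldsymbol{\Sigma}_A}^2$. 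The corrector involves $|\psi_A'''|\lesssim A^{-2}$ supported in a ball of width $\sim A$ and is absorbed in $\|\boldsymbol{\eta}\|_{\boldsymbol{L}^2_{-\kappa}}^2$ upon choosing $\kappa$ sufficiently small.

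The error contributions to $\dot{\mathcal{I}}_A$ come from the three remaining terms in \eqref{nuovaequi}. The modulation term $D_\mathbf{z}\boldsymbol{\Phi}[\mathbf{z}](\dot{\mathbf{z}}-\tilde{\mathbf{z}})$ is treated by exploiting the exponential localization of $D_\mathbf{z}\boldsymbol{\Phi}[\mathbf{z}]$: Cauchy--Schwarz and Proposition \ref{prop:modp} give a contribution $\lesssim \delta^{p-1}\|\boldsymbol{\eta}\|_{L^2(I,\boldsymbol{L}^2_{-\kappa})}^2$, absorbable on the right-hand side via \eqref{eq:relABg}. The profile remainder $\mathbf{R}[\mathbf{z}]$ is handled by the pointwise bound $|\mathbf{R}[\mathbf{z}]|\lesssim|\mathbf{z}|^2 e^{-|x|}$ from Lemma \ref{lem:rpcorr}\ref{item:rem_better_20}; pairing with $\psi_A\,\partial_x\eta_1+\tfrac12\psi_A'\eta_1$, applying Cauchy--Schwarz in space against the weight $\operatorname{sech}(\kappa x)$, and Young's inequality in time yields a bound $\lesssim \|\mathbf{z}\|_{L^4(I)}^4+\|\boldsymbol{\eta}\|_{L^2(I,\boldsymbol{L}^2_{-\kappa})}^2$.

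The main obstacle is the genuine nonlinear contribution $\int \psi_A(\partial_x\eta_1+\tfrac12(\log\psi_A)'\eta_1)\bigl(f(\boldsymbol{\Phi}[\mathbf{z}]_1+\eta_1)-f(\boldsymbol{\Phi}[\mathbf{z}]_1)\bigr)\,dx$, since for $5/3<p\le 2$ the function $f(u)=|u|^{p-1}u$ is only $C^1$ and the standard virial computation would formally require $f\in C^2$. The remedy, following the case split in the proof of Lemma \ref{lem:rpcorr}, is to decompose pointwise according to $\{|\eta_1|\le Q/2\}$, where the increment equals $f'(\boldsymbol{\Phi}_1)\eta_1+O(Q^{p-2}\eta_1^2)$ with $Q^{p-2}\eta_1^2$ exponentially localized, and $\{|\eta_1|>Q/2\}$, where the increment is bounded crudely by $|\eta_1|^p+Q^p$ and one integrates by parts only on the smooth quadratic remainder. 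Using the smallness $\|\eta_1\|_{L^\infty}^{p-1}\lesssim\delta^{p-1}$ from \eqref{eq:cond_sta}, the nonlinear contribution absorbs into $o_\delta(1)\|\boldsymbol{\eta}\|_{L^2(I,\boldsymbol{\Sigma}_A)}^2+C_A\|\boldsymbol{\eta}\|_{L^2(I,\boldsymbol{L}^2_{-\kappa})}^2+\|\mathbf{z}\|_{L^4(I)}^4$. Collecting all contributions, integrating $\dot{\mathcal{I}}_A$ on $I=[0,T]$, and using $|\mathcal{I}_A|\lesssim A\delta^2\lesssim\delta^2$ (with $A$ fixed by \eqref{eq:relABg}) at the endpoints, yields the desired estimate \eqref{eq:1stvInt}.
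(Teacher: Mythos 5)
Your computation of the linear part of $\dot{\mathcal{I}}_A$ is incorrect, and the error is structural, not typographical. With $S_A\defeq\psi_A\partial_x+\frac12\psi_A'$ skew-adjoint, and $\mathcal{I}_A=\<S_A\eta_1,\eta_2\>$ (which is, up to normalization, the paper's $\mathcal{I}_{\mathrm{1st},1}=\frac12\<\mathbf{J}\boldsymbol{\eta},S_A\boldsymbol{\eta}\>$), the free Klein--Gordon flow gives
\begin{equation*}
\dot{\mathcal{I}}_A^{\rm lin}=\<S_A\eta_2,\eta_2\>+\<S_A\eta_1,\partial_x^2\eta_1-\eta_1\>=-\int\psi_A'(\partial_x\eta_1)^2\,dx+\frac14\int\psi_A'''\,\eta_1^2\,dx.
\end{equation*}
The $\eta_2^2$ contribution vanishes identically because $\<S_A\eta_2,\eta_2\>=0$ by skew-adjointness, and the $\eta_1^2$ contribution also cancels between the $\psi_A\partial_x\eta_1\cdot\eta_1$ term and the $\frac12\psi_A'\eta_1\cdot\eta_1$ term. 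So neither $\int\psi_A'\eta_1^2$ nor $\int\psi_A'\eta_2^2$ appears, contradicting your claimed identity.

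This is fatal for the stated approach, because the target norm $\|\boldsymbol{\eta}\|_{\boldsymbol{\Sigma}_A}$ contains $A^{-1}\|\sech(2x/A)\boldsymbol{\eta}\|_{L^2}$, i.e.\ requires control of $\eta_2$, and your single functional produces no coercive term in $\eta_2$ whatsoever. The Hardy-type inequality \eqref{eq:KM19} can upgrade $\|\sech(2x/A)\eta_1'\|_{L^2}^2$ plus a local $L^2_{-\kappa}$ term into $A^{-2}\|\sech(2x/A)\eta_1\|_{L^2}^2$, so the $\eta_1$ part of your argument is fine, but the $\eta_2$ part is simply missing. The paper resolves this by introducing a \emph{second} virial functional, $\mathcal{I}_{\mathrm{1st},2}=\frac12\<\mathbf{J}\boldsymbol{\eta},\sigma_3\zeta_A^4\boldsymbol{\eta}\>$, whose time derivative produces $+\|\zeta_A^2\eta_2\|_{L^2}^2$ from the $-\<\mathrm{diag}(-\partial_x^2+1,1)\boldsymbol{\eta},\sigma_3\zeta_A^4\boldsymbol{\eta}\>$ term, at the cost of an $\eta_1$-contribution that has just been controlled by the first functional (Lemmas \ref{lem:1stV1} and \ref{lem:1stV2}). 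You would need to add an analogue of $\mathcal{I}_{\mathrm{1st},2}$ to close the estimate. The remainder of your proposal — the treatment of the modulation term via \Cref{lem:modbound}, the profile remainder $\mathbf{R}[\mathbf{z}]$ via \eqref{eq:Ijest2}, and the pointwise case-split on the nonlinearity to cope with $f\in C^1\setminus C^2$ — is broadly compatible with the paper's route, though the paper organizes the nonlinear term as $\<f(\boldsymbol{\Phi}_1+\eta_1)-f(\boldsymbol{\Phi}_1)-f(\eta_1),S_A\eta_1\>+\<f(\eta_1),S_A\eta_1\>$ and uses the concavity estimate $||y+h|^{p-1}-|y|^{p-1}|\le|h|^{p-1}$ rather than a pointwise $\{|\eta_1|\lessgtr Q/2\}$ dichotomy; both are viable once the $\eta_2$-gap is repaired.
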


\begin{proposition}[2nd virial  estimate]\label{prop:2ndvirial}
We have
\begin{align}\label{eq:2ndv}
 \|  \boldsymbol{\eta}  \|_{L^2(I,  \boldsymbol{L }^2_{-\kappa})}^2  \lesssim  {\delta}^2 +o _\varepsilon (1)  \|   \boldsymbol{\eta}  \|_{L^2(I, \boldsymbol{ \Sigma }_A  )}^2  +\| {\bm z}  \|_{L^4(I)}^4   .
\end{align}
\end{proposition}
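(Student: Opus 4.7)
The plan is to adapt the Kowalczyk--Martel--Mu\~noz second virial to the Darboux-conjugated variable, following \cite{KMM2022} modulo low-regularity corrections. First apply the intertwining operator $\mathcal{A}=S_0\cdots S_N$ of \eqref{def:calA} componentwise to $\boldsymbol{\eta}$ and set $\mathbf{v}:=\mathcal{A}\boldsymbol{\eta}$. Using \eqref{eq:DarConj2}--\eqref{eq:linearNLKG_2} applied to \eqref{nuovaequi}, $\mathbf{v}$ satisfies
\begin{equation*}
\dot{\mathbf{v}}=\mathbf{J}\mathbf{L}_{N+1}\mathbf{v}+\mathcal{A}\bigl[-{\bm R}[{\bm z}]+\bigl(f(\boldsymbol{\Phi}[{\bm z}]_1+\eta_1)-f(\boldsymbol{\Phi}[{\bm z}]_1)\bigr)\mathbf{j}\bigr]-\mathcal{A} D_{\bm z}\boldsymbol{\Phi}[{\bm z}](\dot{\bm z}-\tilde{\bm z}).
\end{equation*}
For $p_3>p>p_4$, $L_{N+1}=L_4$ carries the repulsive potential \eqref{eq:repuls}; for $p=2$, $L_{N+1}=L_3=-\partial_x^2+1$ and the parity of $\boldsymbol{\eta}$ substitutes for repulsivity.

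At scale $B$ with $B\ll A$ per \eqref{eq:relABg}, set $\phi_B(x)=\sech(x/B)$ and $\zeta_B(x)=\int_0^x\phi_B^2(y)\,dy$, so $\|\zeta_B\|_\infty\lesssim B$. Define the (suitably normalized) second virial functional
\begin{equation*}
\mathcal{I}_2(t) \defeq \frac{1}{B}\bigl\langle\mathbf{J}^{-1}(2\zeta_B\partial_x+\zeta_B')\mathbf{v},\mathbf{v}\bigr\rangle,
\end{equation*}
or the close variant standard in the Kowalczyk--Martel--Mu\~noz framework. The $B^{-1}$ normalization ensures the uniform bound $|\mathcal{I}_2(t)|\lesssim\|\mathbf{v}\|_{\boldsymbol{\mathcal{H}}^1}^2\lesssim\delta^2$, hence $\int_I\dot{\mathcal{I}}_2\,dt=O(\delta^2)$, which will match the $\delta^2$ on the right of \eqref{eq:2ndv}.

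Differentiating $\mathcal{I}_2$ along the transformed equation, the linear commutator $[\mathbf{J}\mathbf{L}_{N+1},2\zeta_B\partial_x+\zeta_B']$ produces a coercive quadratic form
\begin{equation*}
\mathfrak{C}[\mathbf{v}]\sim\frac{1}{B}\int\phi_B^2\bigl(|\partial_x v_1|^2+|v_1|^2+|v_2|^2\bigr)\,dx+\frac{1}{B}\int\zeta_B\,V_{\mathrm{rep}}|v_1|^2\,dx,
\end{equation*}
which, after inverting $\mathcal{A}$ and absorbing the contribution of the discrete modes into $\|{\bm z}\|_{L^2(I)}^2+\|{\bm z}\|_{L^4(I)}^4$, dominates $\|\boldsymbol{\eta}\|_{\boldsymbol{L}^2_{-\kappa}}^2$ for an appropriate $\kappa>0$. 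The error terms decompose into four families: (i) modulation from $\dot{\bm z}-\tilde{\bm z}$, bounded via \Cref{prop:modp} and the a-priori \eqref{eq:main11}; (ii) $\mathcal{A}{\bm R}[{\bm z}]$, of size $|{\bm z}|^2$ by \eqref{item:rem_better_1} and absorbed into $\|{\bm z}\|_{L^4(I)}^4$; (iii) the tail of the commutator on $\{|x|\gtrsim B\}$, bounded by $(B/A)\|\boldsymbol{\eta}\|_{\boldsymbol{\Sigma}_A}^2=o_\varepsilon(1)\|\boldsymbol{\eta}\|_{\boldsymbol{\Sigma}_A}^2$ by \eqref{eq:relABg}; (iv) the genuinely nonlinear term $\mathcal{A}\bigl[(f(\boldsymbol{\Phi}_1+\eta_1)-f(\boldsymbol{\Phi}_1))\mathbf{j}\bigr]$.

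The main obstacle is (iv), which must contend with the $C^{1,p-1}$ regularity of $f$ for $p<2$. I will proceed by the same dichotomy as in \Cref{lem:rpcorr}: on $\{|\widetilde{\boldsymbol{\Phi}}[{\bm z}]_1|\ll Q\}$ exploit $|f'(\boldsymbol{\Phi}_1+\eta_1)-f'(\boldsymbol{\Phi}_1)|\lesssim Q^{p-2}|\eta_1|$ coupled with the $L^\infty$ control $\|\eta_1\|_{L^\infty}\lesssim\delta$; on $\{|\widetilde{\boldsymbol{\Phi}}_1|\gtrsim Q\}$ pair $|f(\boldsymbol{\Phi}_1+\eta_1)-f(\boldsymbol{\Phi}_1)|\lesssim(|\boldsymbol{\Phi}_1|+|\eta_1|)^{p-1}|\eta_1|$ with the pointwise lower bound $|z_2|\gtrsim e^{-(p-1)|x|/2}$ inherited from $|z_2\varphi_2|\gtrsim Q$, exactly as in \eqref{eq:dicot1}--\eqref{eq:Ijest2}. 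The balancing of exponentials used there forces $p>5/3$: at $p\le 5/3$ the extra eigenfunction $\varphi_4$ produces the unbounded factor $Q^{p-2}\varphi_4^2\sim e^{(2p-3)|x|}$ already flagged after \Cref{thm:main1}, destroying the pointwise bound and the whole approach. Combining (i)--(iv) with $|\mathcal{I}_2(t)|\lesssim\delta^2$ and integrating in time yields \eqref{eq:2ndv}.
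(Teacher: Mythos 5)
There are two genuine gaps in your proposal.

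First, the choice $\mathbf{v}:=\mathcal{A}\boldsymbol{\eta}$ is not a legitimate change of variable at the regularity you have. The Darboux intertwiner $\mathcal{A}=S_0\cdots S_N$ is a differential operator of order $N+1$ (here $N=3$ for $\frac53<p<2$, $N=2$ for $p=2$), so for $\boldsymbol{\eta}\in\boldsymbol{\mathcal{H}}^1$ you get $\mathcal{A}\boldsymbol{\eta}\in H^{-N}$; neither the weighted $L^2$ norms nor your virial functional $\mathcal{I}_2$ make sense on that object, and the asserted a-priori bound $|\mathcal{I}_2|\lesssim\delta^2$ is unavailable. The paper instead sets $\mathbf{v}=\mathcal{T}\boldsymbol{\eta}$ with the smoothing conjugation $\mathcal{T}=\langle\im\varepsilon\partial_x\rangle^{-(1+N)}\mathcal{A}^\ast$ of \eqref{def:Tg}, which restores $\mathbf{v}\in H^1$. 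This has a price: $\langle\im\varepsilon\partial_x\rangle^{-(1+N)}$ does not commute with $V_{N+1}$, so \eqref{eq:vBg} carries the commutator term $[\langle\im\varepsilon\partial_x\rangle^{-(1+N)},V_{N+1}]\mathcal{A}^\ast\boldsymbol{\eta}$, and controlling it (Lemma \ref{lem:KM2}) together with transferring smallness from $\mathbf{v}$ back to $\boldsymbol{\eta}$ (the inversion \eqref{eq:Tinverse}, Lemma \ref{lem:coer6A}, and \eqref{eq:key1}) is an essential part of the argument that your proposal compresses to ``after inverting $\mathcal{A}$.'' This machinery is precisely where the $\varepsilon$-dependence in \eqref{eq:relABg} and the $o_\varepsilon(1)$ loss in \eqref{eq:2ndv} come from, and it cannot be dispensed with.

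Second, your claimed coercivity $\mathfrak{C}[\mathbf{v}]\gtrsim\frac1B\int\phi_B^2(|\partial_x v_1|^2+|v_1|^2+|v_2|^2)$ from a single antisymmetric multiplier is incorrect: writing out $\mathcal{I}_2=\frac{2}{B}\langle S v_1,v_2\rangle$ with $S=2\zeta_B\partial_x+\zeta_B'$ skew-adjoint, the time derivative contains $\langle S v_2,v_2\rangle=0$, so the $v_2$ component drops out entirely of the main bulk term. This is exactly why the paper uses \emph{two} functionals, $\mathcal{I}_{\mathrm{2nd},1}=\frac12\langle\mathbf{J}\mathbf{v},\tilde S_{A,B}\mathbf{v}\rangle$ (Lemma \ref{lem:2v1}, controls $v_1$, $v_1'$) and the genuinely different $\mathcal{I}_{\mathrm{2nd},2}=\frac12\langle\mathbf{J}\mathbf{v},\sigma_3 e^{-\kappa\langle x\rangle}\mathbf{v}\rangle$ (Lemma \ref{lem:2v2}, controls $v_2$). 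Your outline of the nonlinear error (iv), the parity remark at $p=2$, and the observation that $p>\frac53$ enters through $Q^{p-2}\varphi_4^2$ are all consistent with the paper, but without the smoothing conjugation, the commutator lemmas, and the second ($\sigma_3$) virial the proposal does not close.
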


  \begin{proof}[Proof of Theorem \ref{thm:main1}]
By continuity,  Proposition \ref{prop:contreform} implies that inequality \eqref{eq:main11} is valid with $I=\R _+$. This implies, adjusting $\epsilon$,  inequality  \eqref{eq:main20} for $I=\R _+$.
\begin{align}\label{stimello}
 \int _{\R }  \|     \boldsymbol{\eta }  (t) \| ^2 _{\boldsymbol{\mathcal{H}}^{1} _{-a}(\R )} \dd t +  \av{\bm z}^4 _{L^4 (\R  )}   \le \epsilon^2
  .
\end{align}
  From  \eqref{eq:cond_sta},  \eqref{eq:tildez} and  \eqref{lem:modbound1} below,   we have $\dot {{\bm z}}\in L^\infty (\R , \C ^2 )$ that, with  \eqref{eq:main20} implies \eqref{eq:main3}. To prove \eqref{eq:main2}, let
  \begin{equation*}
  \begin{aligned}
  \mathsf{a} \defeq \norm{\bm \eta }^2_{\cH^1_{-a}}
  &\ =
  \psc{\sech^2\pare{a x}\eta_1'}{\eta_1'}
  +
  \psc{\sech^2\pare{a x}\eta_1}{\eta_1}
  +
  \psc{\sech^2\pare{a x}\eta_2}{\eta_2}
  \\
  & \ \eqdef \ \mathsf{a}_1\pare{t} + \mathsf{a}_2\pare{t} + \mathsf{a}_3 \pare{t}.
  \end{aligned}
  \end{equation*}
  Using \cref{eq:DzPhi,nuovaequi} we have
  \begin{equation*}
  \dot{\mathsf{a}}_2 = \psc{\sech^2\pare{ax}\eta_1}{- \tilde {\bm \Phi}\bra{\dot{{\bm z}}-\tilde{{\bm z}}}_1
	+
	\eta_2
	- {\bm R}\pare{\bm z}_1
	 }.
  \end{equation*}
  By \cref{lem:modbound1,eq:errp2,eq:errp1,item:rem_better_1}   we obtain
  \begin{equation*}
  \av{\dot{\mathsf{a}}_2}\lesssim \delta^2.
  \end{equation*}
  Proceeding  similarly,
  \begin{equation}\label{eq:timeder}
  \begin{aligned}
  {\dot{\mathsf{a}}_1} = & \ - 2 \psc{\sech^2 \pare{ax}\eta_1''}{\eta_2} + \cO\pare{\delta^2}
  \\
    {\dot{\mathsf{a}}_3} = & \  2 \psc{\sech^2\pare{ax}\eta_1''}{\eta_2} + \cO\pare{\delta^2}
  \end{aligned}
  \end{equation}
where, in the sum of the two terms, the explicit terms in the right hand side  cancel out,
so that we obtain $ \av{\dot{\mathsf{a}}} =\mathcal{O}\pare{\delta^2} $. This and  \eqref{eq:main20} yield \eqref{eq:main2}. Notice that the computations in \eqref{eq:timeder} are formal, but they can be made rigorous, defining $ \mathsf{a}_{\varepsilon} \defeq \norm{  \< \varepsilon \im \partial _x\> ^{-1}  \bm \eta }^2_{\cH^1_{-a}}$ proving $ \av{\dot{\mathsf{a}} _{\varepsilon} }\le C \delta^2$ with a constant $C>0$ independent from $\varepsilon$ and then letting $\varepsilon \to 0^+ $  to get the bound $ \av{\dot{\mathsf{a}}   }\le C \delta^2$. Similar regularizations can be used to justify rigorously some of the computations later in the manuscript.
\end{proof}

 \section{Proof of Proposition \ref{prop:modp}: bounds for the approximate time-derivative of the discrete modes  }
\label{sec:propdmodes1}

\begin{notation}\label{notation:chi}
We fix an even function $\chi\in C_0^\infty(\R , [0,1])$ satisfying
$1_{[-1,1]}\leq \chi \leq 1_{[-2,2]}$ and $x\chi'(x)\leq 0$ and set $\chi_A\defeq\chi(\cdot/A)$.
\end{notation}

\begin{lemma}\label{lem:estF}
For $ \delta > 0 $, $ \bmz \in D_{\mathbb{C}^2}\pare{0, \delta}$ and  $ \bm{\eta} \in D_{ \cH^1_c}\pare{0, \delta} $   then, for any $ A> 0 $ and $ \kappa >0$,
\begin{align}   \label{eq:lem:estF1}
\left \|    \sech \pare{  \kappa x}    \pare{  f'(\boldsymbol{\Phi}\pare{\bm z}_1) - f'(Q) } \eta _1\right \|_{L^2} \lesssim  \delta ^{p-1} \left \|    \sech \pare{  \pare{\kappa +  (p-1)k _2} x} \eta_1\right \| _{L^2}, \\
\norm{\chi_A  \pare{  f'(\boldsymbol{\Phi}\pare{\bm z}_1) - f'(Q) } \eta _1}_{L^1} \lesssim {  \delta^{p-1}} \norm{      \sech \pare{   \frac{2}{A}x}     \eta_1}_{L^2},    \label{eq:lem:estF2}\\
 \norm{   \sech \pare{  \kappa x}    \pare{ f\pare{ \boldsymbol{\Phi}\pare{\bm z}_1 +\eta _1 }-f(\boldsymbol{\Phi}\pare{\bm z}_1)- f'(\boldsymbol{\Phi}\pare{\bm z}_1)\eta _1  }}_{L^2}
  \lesssim  \delta ^{p-1} \norm{        \sech \pare{  \kappa x}    \eta_1}_{L^2}, \label{eq:lem:estF3} \\
\norm{ \chi_A \pare{ f\pare{ \boldsymbol{\Phi}\pare{\bm z}_1 +\eta _1 }-f(\boldsymbol{\Phi}\pare{\bm z}_1)- f'(\boldsymbol{\Phi}\pare{\bm z}_1)\eta _1  } }_{L^1} \lesssim A^{1-\tfrac{p}{2}} { \delta ^{p-1}}\norm{     \sech \pare{   \frac{2}{A}x}     \eta_1 }_{L^2}.    \label{eq:lem:estF4}
\end{align}
\end{lemma}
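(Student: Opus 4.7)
The plan rests on three ingredients: the Hölder continuity $|f'(a)-f'(b)|\lesssim|a-b|^{p-1}$ of $f'(u)=p|u|^{p-1}$ (with exponent $p-1<1$); the Sobolev embedding $\|\eta_1\|_{L^\infty}\lesssim \|\eta_1\|_{H^1}\lesssim \delta$; and the decay bound $|\widetilde{\boldsymbol{\Phi}}[\bm z]_1|\lesssim |\bm z|(|\varphi_0|+|\varphi_2|)\lesssim \delta\, e^{-k_2|x|}$ coming from \eqref{eq:esttildeph} and \eqref{eq:asympt_eigenf}, using that $k_2=(3-p)/2<(p+1)/2=k_0$, so that $\varphi_2$ controls the sum at infinity. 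Combining the first two via a Taylor expansion with Hölder remainder yields the two basic pointwise bounds
\begin{align*}
|f'(\boldsymbol{\Phi}[\bm z]_1)-f'(Q)| & \lesssim |\widetilde{\boldsymbol{\Phi}}[\bm z]_1|^{p-1}\lesssim \delta^{p-1}\, e^{-(p-1)k_2|x|},\\
|f(\boldsymbol{\Phi}[\bm z]_1+\eta_1)-f(\boldsymbol{\Phi}[\bm z]_1)-f'(\boldsymbol{\Phi}[\bm z]_1)\eta_1| & \lesssim |\eta_1|^p\lesssim \delta^{p-1}|\eta_1|,
\end{align*}
which will drive all four estimates.

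For the weighted $L^2$ bounds \eqref{eq:lem:estF1} and \eqref{eq:lem:estF3}, I multiply the above pointwise bounds by $\sech(\kappa x)$ and take $L^2$. For \eqref{eq:lem:estF1} the elementary comparison $\sech(\kappa x)\, e^{-(p-1)k_2|x|}\lesssim \sech((\kappa+(p-1)k_2)x)$ (both sides behave like $e^{-(\kappa+(p-1)k_2)|x|}$ at infinity and are $O(1)$ near $0$) produces the claimed target weight, while \eqref{eq:lem:estF3} follows by direct insertion.

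For the cutoff $L^1$ bounds the strategy is to use $1=\sech(2x/A)\cosh(2x/A)$ together with $\cosh(2x/A)\le \cosh 4$ on $\supp\chi_A\subset[-2A,2A]$. For \eqref{eq:lem:estF2}, Cauchy--Schwarz gives
\begin{equation*}
\|\chi_A(f'(\boldsymbol{\Phi}[\bm z]_1)-f'(Q))\eta_1\|_{L^1}\lesssim \delta^{p-1}\|\chi_A\, e^{-(p-1)k_2|x|}\cosh(2x/A)\|_{L^2}\,\|\sech(2x/A)\eta_1\|_{L^2},
\end{equation*}
and the first factor is $O(1)$ uniformly in $A$ by integrability of $e^{-2(p-1)k_2|x|}$.

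The main obstacle is \eqref{eq:lem:estF4}: a naive Cauchy--Schwarz applied to $\int\chi_A|\eta_1|^p\,dx$ (after writing $|\eta_1|^p\lesssim \delta^{p-1}|\eta_1|$) only produces the lossy factor $A^{1/2}$, which is insufficient when $p$ is close to $2$. To recover the correct power $A^{1-p/2}$, I instead apply Hölder with exponents $2/p$ and $2/(2-p)$ directly to $|\eta_1|^p$:
\begin{equation*}
\int\chi_A |\eta_1|^p\,dx\le\pare{\int_{|x|\le 2A}|\eta_1|^2\,dx}^{p/2}\pare{\int_{|x|\le 2A}dx}^{(2-p)/2}\lesssim A^{1-p/2}\|\sech(2x/A)\eta_1\|_{L^2}^p,
\end{equation*}
where on $\supp\chi_A$ the weight satisfies $\sech(2x/A)\gtrsim 1$. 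Finally I peel off $p-1$ powers via $\|\sech(2x/A)\eta_1\|_{L^2}^{p-1}\le\|\eta_1\|_{L^2}^{p-1}\lesssim \delta^{p-1}$, producing exactly the claimed $A^{1-p/2}\delta^{p-1}\|\sech(2x/A)\eta_1\|_{L^2}$ bound.
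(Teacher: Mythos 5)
Your proof is correct and follows essentially the same path as the paper: establish the pointwise bounds $|f'(\boldsymbol{\Phi}[{\bm z}]_1)-f'(Q)|\lesssim|\widetilde{\boldsymbol{\Phi}}[{\bm z}]_1|^{p-1}$ and $|f(\boldsymbol{\Phi}[{\bm z}]_1+\eta_1)-f(\boldsymbol{\Phi}[{\bm z}]_1)-f'(\boldsymbol{\Phi}[{\bm z}]_1)\eta_1|\lesssim|\eta_1|^p\lesssim\delta^{p-1}|\eta_1|$, then feed them into Cauchy--Schwarz for \eqref{eq:lem:estF2} and into H\"older with exponents $2/p$, $2/(2-p)$ for \eqref{eq:lem:estF4}, using $\sech(2x/A)\sim 1$ on $\supp\chi_A$. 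The only stylistic difference is that you invoke the global H\"older continuity $|f'(a)-f'(b)|\lesssim|a-b|^{p-1}$ (a fact the paper itself uses later, at \eqref{bonina}) to reach the pointwise bounds in one stroke, whereas the paper's proof derives the same two bounds by splitting into the cases $|\widetilde{\boldsymbol{\Phi}}[{\bm z}]_1|\ll Q$ vs.\ $|\widetilde{\boldsymbol{\Phi}}[{\bm z}]_1|\gtrsim Q$ and $|\eta_1|\ll|\boldsymbol{\Phi}[{\bm z}]_1|$ vs.\ $|\eta_1|\gtrsim|\boldsymbol{\Phi}[{\bm z}]_1|$.
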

\begin{proof}
\begin{step}[Proof of \eqref{eq:lem:estF1} and \eqref{eq:lem:estF2}]
First of all in the points where $|\tilde{\boldsymbol{\Phi}}[{\bm z}]_1|\ll Q$,
\begin{align*}
  \left | f'(\boldsymbol{\Phi}\pare{\bm z}_1) - f'(Q) \right | \lesssim  \left |    f''(Q)\tilde{ \boldsymbol{\Phi}}[{\bm z}]_1 \right | \sim Q ^{p-2}\left |     \tilde{ \boldsymbol{\Phi}}[{\bm z}]_1 \right | ^{1-(2-p)+(2-p)}\le \left |     \tilde{ \boldsymbol{\Phi}}[{\bm z}]_1 \right | ^{p-1}.
\end{align*}
If instead $|\tilde{\boldsymbol{\Phi}}[{\bm z}]_1|\gtrsim Q$ then
\begin{align*}
  \left | f'(\boldsymbol{\Phi}\pare{\bm z}_1) - f'(Q) \right | \lesssim  \left |    f'  \pare{\tilde{ \boldsymbol{\Phi}}[{\bm z}]_1  }\right |
  \sim   \left |     \tilde{ \boldsymbol{\Phi}}[{\bm z}]_1 \right | ^{p-1}.
\end{align*}
Thus, by \eqref{eq:asympt_eigenf} we have
\begin{align}\label{eq:pointwise_bound}
  \left |  \pare{  f'(\boldsymbol{\Phi}\pare{\bm z}_1) - f'(Q) } \eta _1 \right | \lesssim  \left |    {\bm z} \right | ^{p-1}  \sech \pare{ (p-1) k _2 x}  |\eta _1|  ,
\end{align}
which yields \eqref{eq:lem:estF1}. We prove now \eqref{eq:lem:estF2}. Using \eqref{eq:pointwise_bound} and   $ \sech\pare{\frac{2}{A} x} \sim 1 $ on $ \bra{-A, A} $,
\begin{align*}
\norm{\chi_A  \pare{  f'(\boldsymbol{\Phi}\pare{\bm z}_1) - f'(Q) } \eta _1}_{L^1}\lesssim & \  \av{\bm z}  ^{p-1}\int _{-A}^{A}      \sech \pare{ (p-1) k _2 x}  |\eta _1\pare{x}|  \dd x
\\
\lesssim  & \ \delta^{p-1} \norm{\sech\pare{\frac{2}{A} \ x} \eta_1}_{L^2}.
\end{align*}
\end{step}

 \begin{step}[Proof of \eqref{eq:lem:estF3} and \eqref{eq:lem:estF4}]
If $|\eta _1|\gtrsim |\boldsymbol{\Phi}\pare{\bm z}_1|$  we have
\begin{align}\label{eq:buuuuh1}
 \left |  f\pare{ \boldsymbol{\Phi}\pare{\bm z}_1 +\eta _1 }-f(\boldsymbol{\Phi}\pare{\bm z}_1)- f'(\boldsymbol{\Phi}\pare{\bm z}_1)\eta _1  \right | \lesssim | \eta _1 | ^{p}.
\end{align}
In the points where $|\eta _1|\ll |\boldsymbol{\Phi}\pare{\bm z}_1|$, for some $t \in (0,1)$ we have
\begin{align} \label{eq:buuuuh2}
 \left |  f\pare{ \boldsymbol{\Phi}\pare{\bm z}_1 +\eta _1 }-f(\boldsymbol{\Phi}\pare{\bm z}_1)- f'(\boldsymbol{\Phi}\pare{\bm z}_1)\eta _1  \right | = \left |  \pare{f'\pare{ \boldsymbol{\Phi}\pare{\bm z}_1 +t\eta _1 } - f'(\boldsymbol{\Phi}\pare{\bm z}_1) } \eta _1  \right |
 \\ \lesssim  \left |   f''\pare{ \boldsymbol{\Phi}\pare{\bm z}_1   }   \eta _1 ^2  \right | \sim |\boldsymbol{\Phi}\pare{\bm z}_1 | ^{p-2}   \eta _1 ^2 \le  | \eta _1 | ^{p}. \nonumber
\end{align}
Inequality \eqref{eq:lem:estF3}  follows immediately.
 By  $  \sech \pare{   \frac{2}{A}x} \sim 1$ in $\mathrm{supp} \chi_A$,  \eqref{eq:cond_sta} and H\"older inequality, we get
\begin{align*}
\|\chi_A | \eta _1 | ^{p}\|_{L^1}& \ \lesssim  A^{1-\tfrac{p}{2}}  \norm{\sech\pare{\frac{2}{A} \ x} \eta_1}_{L^{ \frac{2}{p}}}   \lesssim  A^{1-\tfrac{p}{2}} \delta ^{p-1} \norm{\sech\pare{\frac{2}{A} \ x} \eta_1}_{L^2},
\end{align*}
which proves \eqref{eq:lem:estF4}.
 \end{step}
\end{proof}

\begin{lemma}
\label{lem:modbound}
With the same hypothesis of \Cref{lem:estF} and for any $\kappa\in\pare{0, k_2} $ the following bound holds true
\begin{align}\label{lem:modbound1}
\av{ \dot{{\bm z}}\pare{t}-\tilde{{\bm z}}\pare{t}}   \lesssim  \delta ^{p-1}   \norm{     \sech \pare{  \kappa x} \boldsymbol{\eta}\pare{t} } _{L^2\pare{\mathbb{R}}}.
\end{align}
\end{lemma}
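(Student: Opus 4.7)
The plan is to project the modulation equation \eqref{eq:modeq} onto the discrete subspace by pairing each side with $D_{\bm z}\boldsymbol{\Phi}[{\bm z}]\Theta$ via $\langle \mathbf{J}\cdot,\cdot\rangle$, for $\Theta$ varying over an $\mathbb{R}$-basis of $\mathbb{C}^2$, and then to read off and invert the resulting algebraic system to recover $\dot{\bm z}-\tilde{\bm z}$. Since $\boldsymbol{\Phi}[{\bm z}]$ is affine in $(\mathrm{Re}\,z_1,\mathrm{Im}\,z_1,\mathrm{Re}\,z_2,\mathrm{Im}\,z_2)$, the operator $D_{\bm z}\boldsymbol{\Phi}[{\bm z}]$ does not depend on ${\bm z}$. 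Differentiating in $t$ the orthogonality condition defining $\boldsymbol{\mathcal{H}}^1_c$ then gives $\langle \mathbf{J}\dot{\boldsymbol{\eta}}, D_{\bm z}\boldsymbol{\Phi}[{\bm z}]\Theta\rangle = 0$, so the LHS of the projected equation collapses to $\Omega_V(\dot{\bm z}-\tilde{\bm z},\Theta) \defeq \langle\mathbf{J} D_{\bm z}\boldsymbol{\Phi}[{\bm z}](\dot{\bm z}-\tilde{\bm z}), D_{\bm z}\boldsymbol{\Phi}[{\bm z}]\Theta\rangle$, a constant real-bilinear form on $\mathbb{C}^2$. Non-degeneracy of $\Omega_V$ follows from \eqref{eq:matr_eig12}, \eqref{eq:matr_eig3} since the eigenfunctions $\boldsymbol{\Phi}_0,\overline{\boldsymbol{\Phi}}_0$ (eigenvalues $\pm\nu_0\neq 0$) and $\boldsymbol{\Phi}_2,\overline{\boldsymbol{\Phi}}_2$ (eigenvalues $\pm\im\lambda\neq 0$) of $\mathbf{J}\mathbf{L}_0$ are non-trivially paired by the symplectic form.

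Next I would check that on the projected right-hand side only the contribution from $\bm{F}_Q({\bm z},\boldsymbol{\eta})$ survives. By self-adjointness of $\mathbf{L}_0$ and $\mathbf{J}^T = -\mathbf{J}$, one has $\langle\mathbf{J}\mathbf{J}\mathbf{L}_0\boldsymbol{\eta}, D_{\bm z}\boldsymbol{\Phi}[{\bm z}]\Theta\rangle = -\langle\boldsymbol{\eta}, \mathbf{L}_0 D_{\bm z}\boldsymbol{\Phi}[{\bm z}]\Theta\rangle$; the eigenvalue relations \eqref{eq:matr_eig12}, \eqref{eq:matr_eig3} yield $\mathbf{L}_0 D_{\bm z}\boldsymbol{\Phi}[{\bm z}]\Theta = \mathbf{J} D_{\bm z}\boldsymbol{\Phi}[{\bm z}]\Theta''$ for some $\Theta''\in\mathbb{C}^2$, and so this term equals $\langle\mathbf{J}\boldsymbol{\eta}, D_{\bm z}\boldsymbol{\Phi}[{\bm z}]\Theta''\rangle = 0$ by the definition of $\boldsymbol{\mathcal{H}}^1_c$. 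The contribution of $\mathbf{R}[{\bm z}]$ vanishes by the identity \eqref{R:orth} of \Cref{lem:rpcorr}. These two structural cancellations are the heart of the argument, and in particular the first uses crucially the $\mathbf{J}\mathbf{L}_0$-invariance of the discrete subspace.

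Finally I bound $|\langle\mathbf{J}\bm{F}_Q, D_{\bm z}\boldsymbol{\Phi}[{\bm z}]\Theta\rangle|$. By \eqref{eq:asympt_eigenf} every component of $D_{\bm z}\boldsymbol{\Phi}[{\bm z}]\Theta$ decays at least as $e^{-k_2|x|}|\Theta|$, so $\|\cosh(\kappa x) D_{\bm z}\boldsymbol{\Phi}[{\bm z}]\Theta\|_{L^2}\lesssim|\Theta|$ for any $\kappa\in(0,k_2)$. Decomposing $\bm{F}_Q$ into its two summands and Cauchy--Schwarzing each against $D_{\bm z}\boldsymbol{\Phi}[{\bm z}]\Theta$ with weight $\sech(\kappa x)$, the estimates \eqref{eq:lem:estF1} and \eqref{eq:lem:estF3} of \Cref{lem:estF}, together with the trivial inequality $\sech((\kappa+(p-1)k_2)x)\le\sech(\kappa x)$ used in the first, produce $|\langle\mathbf{J}\bm{F}_Q, D_{\bm z}\boldsymbol{\Phi}[{\bm z}]\Theta\rangle|\lesssim\delta^{p-1}\|\sech(\kappa x)\boldsymbol{\eta}\|_{L^2}|\Theta|$. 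Inverting the non-degenerate form $\Omega_V$ then delivers \eqref{lem:modbound1}. The principal subtlety is the algebraic verification that $\mathbf{L}_0$ maps the discrete subspace into $\mathbf{J}$ applied to itself; without this, the linear-in-$\boldsymbol{\eta}$ contribution to the projection would not cancel and the stated $\delta^{p-1}$ gain could not be obtained.
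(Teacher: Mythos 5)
Your argument is correct and follows essentially the same route as the paper's own proof: project the modulation equation \eqref{eq:modeq} against $\left\langle \mathbf{J}\,\bullet\,, D_{\bm z}\boldsymbol{\Phi}[{\bm z}]\Theta\right\rangle$, use that $D_{\bm z}\boldsymbol{\Phi}[{\bm z}]$ is constant and $\boldsymbol{\eta}\in\boldsymbol{\mathcal{H}}^1_c$ to kill the $\dot{\boldsymbol{\eta}}$ and the $\mathbf{J}\mathbf{L}_0\boldsymbol{\eta}$ terms (the latter via the $\mathbf{J}\mathbf{L}_0$-invariance of the discrete subspace, which the paper encodes as $\mathbf{J}\mathbf{L}_0 D_{\bm z}\boldsymbol{\Phi}[{\bm z}]\Theta = D_{\bm z}\boldsymbol{\Phi}[{\bm z}]\,\tilde{\bm z}_0[\Theta]$), kill the $\mathbf{R}[{\bm z}]$ term via \eqref{R:orth}, and estimate the surviving $\bm F_Q$ term with \Cref{lem:estF}. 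You are more explicit than the paper in naming the constant bilinear form and invoking its non-degeneracy to invert, which is a useful clarification; the one small slip is describing $\boldsymbol{\Phi}_0,\overline{\boldsymbol{\Phi}}_0$ as eigenfunctions of $\mathbf{J}\mathbf{L}_0$ with eigenvalues $\pm\nu_0$, whereas \eqref{eq:matr_eig12} gives $\mathbf{J}\mathbf{L}_0\boldsymbol{\Phi}_0=\nu_0\overline{\boldsymbol{\Phi}}_0$ so it is the combinations $\mathbf{Y}_\pm$ that are the actual eigenvectors --- but this does not affect the non-degeneracy conclusion.
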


\begin{proof}
 We apply $\< \mathbf{J} \bullet \ , D_{\bm z}\boldsymbol{\Phi}\pare{\bm z}\Theta \>$ to equation \cref{eq:modeq,eq:modeq}. By $ \< \mathbf{J} \dot {\bm \eta} \ , D_{\bm z}\boldsymbol{\Phi}\pare{\bm z}\Theta \> = 0 $, due to $ {\bm \eta}\in \boldsymbol{\mathcal{H}}^1_c $, and $  D_{\bm z}\boldsymbol{\Phi}\pare{\bm z} $ independent of $ \bm z $, we get
\begin{multline*}
  \< \mathbf{J} D_{{\bm z}}\boldsymbol{\Phi}\pare{\bm z}(\dot{{\bm z}}-\tilde{{\bm z}}) , D_{\bm z}\boldsymbol{\Phi}\pare{\bm z}\Theta \>
  =- \cancel{\<   \mathbf{J} \boldsymbol{\eta} ,  \mathbf{J}\mathbf{L}_0 D_{\bm z}\boldsymbol{\Phi}\pare{\bm z}\Theta \> }-\cancel{\< \mathbf{J} {\bm R}\pare{\bm z} , D_{\bm z}\boldsymbol{\Phi}\pare{\bm z}\Theta \> } \\
   +\< \mathbf{J}\pare{  f'(\boldsymbol{\Phi}\pare{\bm z}_1) - f'(Q) } \eta _1 \mathbf{j}  \ , D_{\bm z}\boldsymbol{\Phi}\pare{\bm z}\Theta \> 
    +\< \mathbf{ J}\pare{ f\pare{ \boldsymbol{\Phi}\pare{\bm z}_1 +\eta _1 }-f(\boldsymbol{\Phi}\pare{\bm z}_1)- f'(\boldsymbol{\Phi}\pare{\bm z}_1)\eta _1  } \mathbf{j}  \ , D_{\bm z}\boldsymbol{\Phi}\pare{\bm z}\Theta \> ,
\end{multline*}
where the second cancelation  follows from \eqref{R:orth} and the first, by \eqref{eq:rp}, from (cf, \cref{eq:rp--1})
\begin{align*}
  \mathbf{J}\mathbf{L}_0 D_{\bm z}\boldsymbol{\Phi}\pare{\bm z}\Theta = D_{\bm z}\boldsymbol{\Phi}\pare{\bm z} \  \tilde{{\bm z}}_0 \bra{ \Theta}  ,
\end{align*}
 and the fact that $ \bm{\eta}\in\cH^1_c $.
Using  Lemma \ref{lem:estF}, we obtain
\begin{align*}
\av{\dot{{\bm z}}-\tilde{{\bm z}}}  \lesssim \norm{   \sech \pare{ \kappa x}    \pare{  f'(\boldsymbol{\Phi}\pare{\bm z}_1) - f'(Q) } \eta _1}_{L^2} \\
+  \norm{   \sech \pare{  \kappa x}    \pare{ f\pare{ \boldsymbol{\Phi}\pare{\bm z}_1 +\eta _1 }-f(\boldsymbol{\Phi}\pare{\bm z}_1)- f'(\boldsymbol{\Phi}\pare{\bm z}_1)\eta _1  } }_{L^2}  \lesssim  \delta ^{p-1} \norm{       \sech \pare{   \kappa x} \eta_1 }_{L^2} .
\end{align*}
\end{proof}
The proof   of \Cref{prop:modp} follows from \Cref{lem:modbound} setting $ \kappa \defeq \xfrac{k_2}{2} $ and  integrating-in-time.

\section{Proof of  Proposition \ref{prop:FGR}: the Fermi Golden Rule} \label{sec:FGR}

Here the Fermi Golden Rule does not refer any more to the condition in \eqref{ass:FGR1}, but rather to the proof  of dissipation of $z_2$ due, ultimately,  to the nonlinear interaction of $z_2$ with the continuous modes of ${\bm u}$.  There is an old history, starting from \cite{Sigal1993,BP1995},
with many significant contributions, like \cite{SW1999,MR2006}. Here we use the argument  of  Kowalczyk and   Martel \cite{KM22}. \\


To prove Proposition \ref{prop:FGR},    for the    $\mathbf{g}_{2}$ in Assumption \ref{rem:FGR},    we consider
\begin{align}\label{eq:FGRfunctional}
\mathcal{J}_{\mathrm{FGR}}\defeq\< \mathbf{J} \boldsymbol{\eta},\chi_A \pare{  {z}^{2}_2\mathbf{g}_{2} + \overline{{z}}^{2}_2\overline{\mathbf{g}}_{2}}      \> .
\end{align}

\begin{lemma}\label{lem:FGR1}
We have
\begin{equation}
	|z_2|^4 \lesssim - \dot{\mathcal{J}}_{\mathrm{FGR}}+ \frac{\dd}{\dd t} \Re  \<  {2^{-1}\lambda  ^{-2}} z_2^4\mathbf{J}\mathbf{G}_2 , \overline{\mathbf{g}}_{2} \>+\delta^2|z_1|^2 +
	A^{-1 } \pare{ \|\boldsymbol{\eta}\|_{\boldsymbol{ \Sigma }_A}^2 +   \| \sech  ( k_2 x) \boldsymbol{\eta}\|_{L^2}^2 }
 \label{eq:lem:FGR11}
\end{equation}
\end{lemma}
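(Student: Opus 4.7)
My plan is to differentiate $\mathcal{J}_{\mathrm{FGR}}$ in time, insert the equations of motion for $\boldsymbol{\eta}$ and $z_2$, and isolate the resonant $\av{z_2}^4$--term using \Cref{ass:FGR}. Concretely, by the product rule
$\dot{\mathcal{J}}_{\mathrm{FGR}}=\<\mathbf{J}\dot{\boldsymbol{\eta}},\chi_A(z_2^2\mathbf{g}_2+\bar z_2^2\bar{\mathbf{g}}_2)\>+2\<\mathbf{J}\boldsymbol{\eta},\chi_A(z_2\dot z_2\mathbf{g}_2+\bar z_2\dot{\bar z}_2\bar{\mathbf{g}}_2)\>$, into which I would substitute $\dot{\boldsymbol{\eta}}=-D_{\bmz}\bm{\Phi}[\bmz](\dot\bmz-\tilde\bmz)+\mathbf{J}\mathbf{L}_0\boldsymbol{\eta}-\mathbf{R}[\bmz]+\bm{F}_Q$ from \eqref{eq:modeq}, together with the modulation expansion $\dot z_2=\im\lambda z_2+O(\av{\bmz}^2)+O(\delta^{p-1}\norm{\boldsymbol{\eta}}_{\boldsymbol{L}^2_{-\kappa}})$ coming from \eqref{eq:rp--1}, \eqref{eq:tildez}, \Cref{item:rem_better_20} and \Cref{lem:modbound}.

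The first crucial step is a cancellation at linear order in $\boldsymbol{\eta}$. In the contribution $\<\mathbf{J}\cdot\mathbf{J}\mathbf{L}_0\boldsymbol{\eta},\chi_A z_2^2\mathbf{g}_2\>$ I would shift $\mathbf{L}_0$ onto the test function by self--adjointness and use $\mathbf{L}_0\mathbf{g}_2=-2\im\lambda\mathbf{J}\mathbf{g}_2$ (equivalent to $\mathbf{J}\mathbf{L}_0\mathbf{g}_2=2\im\lambda\mathbf{g}_2$) to produce $-2\im\lambda z_2^2\<\mathbf{J}\boldsymbol{\eta},\chi_A\mathbf{g}_2\>$, modulo the commutator $[\mathbf{L}_0,\chi_A]=-[\partial_x^2,\chi_A]$. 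This exactly cancels the $+2\im\lambda z_2^2\<\mathbf{J}\boldsymbol{\eta},\chi_A\mathbf{g}_2\>$ coming from the leading piece $2z_2\dot z_2\simeq 2\im\lambda z_2^2$ (and similarly for the complex conjugates). The commutator $[\partial_x^2,\chi_A]=\chi_A''+2\chi_A'\partial_x$ is supported on $\av{x}\sim A$ and of size $A^{-1}$, so after Cauchy--Schwarz against $\boldsymbol{\eta}$ and using the boundedness of $\mathbf{g}_2,\mathbf{g}_2'$ it contributes precisely the $A^{-1/2}\pare{\norm{\boldsymbol{\eta}}_{\boldsymbol{\Sigma}_A}^2+\norm{\sech(k_2 x)\boldsymbol{\eta}}_{L^2}^2}$ error on the right--hand side of \eqref{eq:lem:FGR11}.

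The dominant remaining contribution is then $-\<\mathbf{J}\mathbf{R}[\bmz],\chi_A(z_2^2\mathbf{g}_2+\bar z_2^2\bar{\mathbf{g}}_2)\>$. By \Cref{item:rem_better_2} and the definition \eqref{eq:G0} of $\mathbf{G}_2$, the second Taylor coefficient of $\mathbf{R}[\bmz]$ in $z_2^2$ at $\bmz=0$ is $\tfrac12\mathbf{G}_2$, with quadratic remainder whose spatial integral against $\chi_A\mathbf{g}_2$ is $o(\delta^2)$ by the reinforced smallness \eqref{eq:142bis}. The pairing then splits into a resonant piece equal to $\gamma\av{z_2}^4$ by \Cref{ass:FGR}, and a non--resonant oscillation proportional to $z_2^4\<\mathbf{J}\mathbf{G}_2,\bar{\mathbf{g}}_2\>+c.c.$; the latter, using $\dot z_2^4=4\im\lambda z_2^4+\text{errors}$, is converted by division by $4\im\lambda$ and by taking real parts into the exact--derivative correction $\tfrac{\dd}{\dd t}\Re\<(2\lambda^2)^{-1}z_2^4\mathbf{J}\mathbf{G}_2,\bar{\mathbf{g}}_2\>$ appearing on the right of \eqref{eq:lem:FGR11}. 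The remaining errors from $\dot\bmz-\tilde\bmz$ are handled via \Cref{lem:modbound}, those from $\bm{F}_Q$ via \Cref{lem:estF}, and the quadratic $z_1$--part of $\mathbf{R}[\bmz]$ paired with $\chi_A\mathbf{g}_2\in L^\infty$ yields the $\delta^2\av{z_1}^2$ term once one factor of $\av{z_1}$ is absorbed by $\av{\bmz}\le\delta$ and the spatial integral is seen to converge thanks to the exponential decay $\varphi_0\sim e^{-\frac{p+1}{2}\av{x}}$. The delicate point throughout is that, because $f$ is only $C^{1,p-1}$, the second--order Taylor expansion of $\mathbf{R}[\bmz]$ is accurate in the strong sense needed above only by virtue of \Cref{item:rem_better_2} together with the reinforced condition \eqref{eq:142bis} on $A$; without these, the coercive coefficient $\gamma$ of $\av{z_2}^4$ in \eqref{eq:lem:FGR11} would be contaminated by an $O(1)$ error coming from the low regularity of the nonlinearity, and the estimate would not close.
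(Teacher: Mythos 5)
Your proposal reproduces the paper's own proof essentially step by step: differentiate $\mathcal{J}_{\mathrm{FGR}}$, cancel the linear‑in‑$\boldsymbol{\eta}$ terms via $\mathbf{J}\mathbf{L}_0\mathbf{g}_2 = 2\im\lambda\mathbf{g}_2$ up to the commutator $[\mathbf{L}_0,\chi_A]$ (which supplies the $A^{-1/2}$ error), isolate $-\<\mathbf{J}\mathbf{R}[\bmz],\chi_A(z_2^2\mathbf{g}_2+\bar z_2^2\bar{\mathbf{g}}_2)\>$ as the main term, Taylor‑expand $\mathbf{R}[\bmz]$ to second order using \Cref{item:rem_better_2} and the refinement \eqref{eq:142bis} to control the remainder, extract $\gamma|z_2|^4$ from the resonant piece via \Cref{ass:FGR}, convert the non‑resonant piece into the exact time derivative $\frac{\dd}{\dd t}\Re\<2^{-1}\lambda^{-2}z_2^4\mathbf{J}\mathbf{G}_2,\bar{\mathbf{g}}_2\>$ via $\frac{\dd}{\dd t}z_2^4 = -4\lambda^2 z_2^4 + \text{errors}$, and bound the remaining terms with \Cref{lem:modbound} and \Cref{lem:estF}. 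This matches the paper's decomposition into $A_1,\ldots,A_4$, $A_{11},\ldots,A_{14}$ and $A_{141},A_{142}$ and is correct.
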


\begin{proof}
Differentiating $\mathcal{J}_{\mathrm{FGR}}$  and using \cref{eq:modeq}, we have
\begin{align*}
\dot{\mathcal{J}}_{\mathrm{FGR}}=&
\< \mathbf{J} \dot{\boldsymbol{\eta}}, \chi_A \pare{  {z}^{2}_2\mathbf{g}_{2} + \overline{{z}}^{2}_2\overline{\mathbf{g}}_{2}}\>
+\< \mathbf{J}  \boldsymbol{\eta},\chi_A     D_{\bm z} \pare{  {z}^{2}_2\mathbf{g}_{2} + \overline{{z}}^{2}_2\overline{\mathbf{g}}_{2}} \widetilde{{\bm z} }_0        \> +\< \mathbf{J}  \boldsymbol{\eta},\chi_A     D_{\bm z} \pare{  {z}^{2}_2\mathbf{g}_{2} + \overline{{z}}^{2}_2\overline{\mathbf{g}}_{2}} \widetilde{{\bm z} } _R       \>
\\&+\< \mathbf{J}   \boldsymbol{\eta},\chi_A D_{\bm z} \pare{  {z}^{2}_2\mathbf{g}_{2} + \overline{{z}}^{2}_2\overline{\mathbf{g}}_{2}} \pare{\dot{{\bm z}}-\tilde{{\bm z}}}  \> =:A_1+A_2+A_3+A_4.\nonumber
\end{align*}
By  \cref{lem:modbound},  \eqref{eq:relABg} and the following elementary inequality,
\begin{equation}
\label {eq:lem:rhoequiv}
 \| \sech \pare{  \kappa x}  \eta_1\|_{L^2}    \le     \norm{ \sech   \pare{   \frac{2}{A}x}  \eta_1}_{L^2} \le A \norm{\bm{\eta}}_{{\bm \Sigma}_A},
\end{equation}
  we have
\begin{align*}
|A_4|&\lesssim
 \|\boldsymbol{\eta}\chi_A\|_{L^1}\delta ^2 \av{\dot{{\bm z}}-\tilde{{\bm z}} } _{\C^2}
\lesssim
 \delta  ^{p+1}  A ^{\frac{1}{2}}  \norm{ \sech \pare{ \frac{2}{A}x}\boldsymbol{\eta} }_{L^2}  \norm{ \sech \pare{  k_2 x} {\eta}_1 }_{L^2}
\\&\lesssim
  \delta ^{p+1}   A ^{\frac{5}{2}}    \|    \boldsymbol{\eta}\|_{\boldsymbol{ \Sigma }_A}^2 \lesssim A^{-1 }\|    \boldsymbol{\eta}\|_{\boldsymbol{ \Sigma }_A}^2.
\end{align*}
We remark that
\begin{align}\label{eq:fgr11}
  A_2= \< \mathbf{J}  \boldsymbol{\eta},\chi_A     \pare{  2\im \lambda  {z}^{2}_2\mathbf{g}_{2} -  2\im \lambda \overline{{z}}^{2}_2\overline{\mathbf{g}}_{2} } \> .
\end{align}
By   \cref{eq:modeq} and with the cancelation in the 1st line   due to \eqref{eq:fgr11}  and $\mathbf{J} \mathbf{L}_1\mathbf{g}_{2}  =2\im \lambda \mathbf{g}_{2} $,
\begin{align*}&
A_1+A_2= -\< \mathbf{J}  D_{{\bm z}}\boldsymbol{\Phi}\pare{\bm z}(\dot{{\bm z}}-\tilde{{\bm z}}),\chi_A\pare{  {z}^{2}_2\mathbf{g}_{2} + \overline{{z}}^{2}_2\overline{\mathbf{g}}_{2}} \>
-\xcancel{
\< \mathbf{J}
	\boldsymbol{\eta},\chi_A  \mathbf{J} \mathbf{L}_0 \pare{  {z}^{2}_2\mathbf{g}_{2} + \overline{{z}}^{2}_2\overline{\mathbf{g}}_{2}}\> }  + \xcancel{A_2}\\
	& -\<
	\boldsymbol{\eta} , \comm{ \mathbf{L}_0}{\chi_A}     \pare{   {z}^{2}_2\mathbf{g}_{2} + \overline{{z}}^{2}_2\overline{\mathbf{g}}_{2} } \>  +\< \mathbf{J}\bm{F}_Q\pare{ \bmz, \bm{\eta}} , \chi_A \pare{   {z}^{2}_2\mathbf{g}_{2} + \overline{{z}}^{2}_2\overline{\mathbf{g}}_{2} } \>
\\&
-\<  \mathbf{J} {\bm R}\pare{\bm z},\chi_A \pare{   {z}^{2}_2\mathbf{g}_{2} + \overline{{z}}^{2}_2\overline{\mathbf{g}}_{2} }  \>
=:A_{11}+A_{12}+A_{13}+A_{14} .
\end{align*}
By  \Cref{lem:modbound}, \eqref{eq:relABg} and \eqref{eq:lem:rhoequiv}, we have
\begin{align*}
|A_{11}|&\lesssim
|\dot{{\bm z}}-\tilde{{\bm z}} |  |z_2 |^2 \lesssim
\delta  ^{p-1} \pare{|{\bm z} |^4+ \norm{\sech \pare{ \xfrac{k _2 x}{2}}\boldsymbol{\eta} }_{L^2}^2  } \le A^{-1 }\pare{ |{\bm z} |^4+\|\boldsymbol{\eta}\|_{\boldsymbol{ \Sigma }_A}^2}.
\end{align*}
By $ \bra{\mathbf{L}_0,\chi_A}  =\begin{pmatrix}
-\chi_A''-2\chi_A'\partial_x & 0 \\ 0 & 0
\end{pmatrix}$,  for a fixed small $\delta _2>0$     we have
\begin{align*}
|A_{12}|
&\lesssim  |z_2 |^2  \pare{\norm{\chi_A'' \eta_1}_{L^1}+\norm{\chi_A' \eta_1'}_{L^1}}\\
&\lesssim |z_2 |^2 \pare{ A^{-3/2}\norm{\sech \pare{ \frac{2}{A} x} \eta_1}_{L^2}+A^{-1/2}\norm{\sech \pare{ \frac{2}{A} x} \eta_1'}_{L^2} }\\
&
\le   \delta _2 |z_2|^4  + C(\delta _2)  A^{-1 }\pare{ \|\sech \pare{ \frac{2}{A} x}\eta_1'\|_{L^2}^2+A^{-2}\|\sech \pare{ \frac{2}{A} x} \eta_1\|_{L^2}^2} ,
\end{align*}
so that we can absorb the $ \delta _2 |z_2|^4$ term in the left hand side of    \eqref{eq:lem:FGR11}.

\noindent By   \Cref{lem:estF}, \eqref{eq:lem:estF2} and  \eqref{eq:lem:estF4},   we have
\begin{align*}
|A_{13}|&\lesssim   |z_2 |^2   \norm{  \chi_A \bm{F}_Q\pare{\bm{\eta}, \bmz} }_{L^1}  \ \lesssim {  A \delta ^{p-1}} \ \pare{ \av{\bmz}^4 +  \norm{\bm{\eta}}_{\bm{\Sigma}_A}^2 }.
\end{align*}
The key term is the following,
\begin{align*}
A_{14}= & \  -2^{-1}\<  \mathbf{J}  D^2_{\bm z} {\bm R}[0] {\bm z}^2,\chi_A\pare{  {z}^{2}_2\mathbf{g}_{2} + \overline{{z}}^{2}_2\overline{\mathbf{g}}_{2}} \> \\
& +\<  \mathbf{J}  \pare{ \frac{1}{2}  D^2_{\bm z} {\bm R}[0]    \pare{\bmz, \bmz} - {\bm R}\pare{\bm z}   },\chi_A\pare{  {z}^{2}_2\mathbf{g}_{2} + \overline{{z}}^{2}_2\overline{\mathbf{g}}_{2}} \>
=:    A_{141} + A_{142}
\end{align*}
We bound the term $ A_{142} $.
 We have
\begin{equation*}
\begin{aligned}
  D^2_{\bm z} {\bm R}[0]   \pare{\bmz, \bmz} = & \  \left. D^2_{\bm z} \left [  D_{\bm z}\bm{\Phi}\pare{\bm z}\ \tilde{{\bm z}}_R \pare{{\bm z}}
 -\left(  f(\boldsymbol{\Phi}\pare{\bm z}_1)   - f(Q)-f' (Q)  \tilde{\boldsymbol{\Phi}}[{\bm z}]_1     \right)  \mathbf{j}   \right ] \right | _{{\bm z}=0} \pare{\bmz, \bmz}
 \\
 = & \   D_{\bm z}\bm{\Phi}\pare{\bm z}D^2_{\bm z}\tilde{{\bm z}}_R \bra{0} \pare{\bmz, \bmz} - f''\pare{Q}\mathbf{j} \  \tilde{\bmPhi}\bra{\bmz}^2 ,
\end{aligned}
\end{equation*}
so that
\begin{align}
\label{eq:Taylor_second_order_Remainder}
\frac{1}{2}  D^2_{\bm z} {\bm R}[0]    \pare{\bmz, \bmz} - {\bm R}\pare{\bm z}
&=
D_{\bmz}\bmPhi\pare{\bmz} \pare{ \frac{1}{2} D^2_\bmz \tilde{{\bm z}}_R \bra{0}\pare{\bmz, \bmz}  - \tilde{\bmz}_R }
\\&
-\left(     f(Q) + f' (Q)  \tilde{\boldsymbol{\Phi}}[{\bm z}]_1   + \frac{f''\pare{Q}}{2} \tilde{\bmPhi}\bra{z}^2 - f(\boldsymbol{\Phi}\pare{\bm z}_1)  \right)  \mathbf{j} . \nonumber
\end{align}
In $\supp \chi (\cdot /A) \subseteq [-2A,2A]$,  we have $\left |   \boldsymbol{\Phi} ({\bm z})_1\right |\ll Q$,  so that by \eqref{eq:Taylor_second_order_Remainder}, $ \av{D_{\bmz}\bmPhi\pare{\bmz}\pare{x} \cdot w}\lesssim e^{-k_2\av{x}}\av{w} $  and  \eqref{eq:142bis},   we have
\begin{equation*}
 \begin{aligned}
  |A_{142}|   \lesssim & \  |{\bm z}| ^2  \int _{-2A}^{2A} \set{ \av{ \frac{1}{2} D^2_\bmz \tilde{{\bm z}}_R \bra{0}\pare{\bmz, \bmz}  - \tilde{\bmz}_R }
  e^{-k_2|x|}  + \left|     f(Q) + f' (Q)  \tilde{\boldsymbol{\Phi}}[{\bm z}]_1   + \frac{f''\pare{Q}}{2} \tilde{\bmPhi}\bra{z}^2 - f(\boldsymbol{\Phi}\pare{\bm z}_1)  \right|} \dd x
   \\
    \lesssim  & \    o \pare{ \av{{\bm z}} ^4 }  + |{\bm z}| ^5 \norm{  f'''(Q) e^{-3k_2 |x|} }_{L^1(\R )} ,
 \end{aligned}
 \end{equation*}
  so that, by   $\av{ f'''(Q\pare{x}) e^{-3k_2 |x|}}\sim e^{-k_2\av{x}} $, we obtain
 \begin{equation}
 \label{eq:142}
  |A_{142}|   \lesssim   o \pare{ \av{{\bm z}} ^4 }.
 \end{equation}
 Next we  write
\begin{align*}
  A_{141} &= -2^{-1}\<  \mathbf{J}\pare{ z_2^2\mathbf{G}_2  + \overline{z}_2^2\overline{\mathbf{G}}_2  } ,   {z}^{2}_2\mathbf{g}_{2} + \overline{{z}}^{2}_2\overline{\mathbf{g}}_{2} \> \\& -2^{-1}\<  \mathbf{J}\pare{ z_2^2\mathbf{G}_2  + \overline{z}_2^2\overline{\mathbf{G}}_2   } ,(1-\chi_A)\pare{  {z}^{2}_2\mathbf{g}_{2} + \overline{{z}}^{2}_2\overline{\mathbf{g}}_{2}} \> +O\pare{  z_1 ^4   } +  \delta _2    O\pare{  z_2 ^4  }
\end{align*}
for a   $\delta _2 >0$ arbitrarily small but fixed.  The first term in the second line, can be absorbed in the third.  So,
\begin{align*}
  A_{142} &= - |z_2| ^4 \<  \mathbf{J}\mathbf{G}_2 , \mathbf{g}_{2} \> - 2\Re \<  z _2 ^4\mathbf{J}\mathbf{G}_2 , \overline{\mathbf{g}}_{2} \> +O\pare{  z_1 ^4   } +  \delta _2    O\pare{  z_2 ^4  }.
\end{align*}
By elementary computation,    for $\tilde{{\bm z}} =:((\tilde{{\bm z}})_1,(\tilde{{\bm z}})_2) ^ \intercal $ and using \eqref{eq:rp--1}  and \eqref{eq:tildez},    we have
\begin{align*}
   \frac{d}{\dd t}z_2 ^4&= 4\im \lambda (\tilde{{\bm z}})_2 z_2 ^3 +4\im \lambda \pare{ \dot z_2- (\tilde{{\bm z}})_2 } z_2 ^3\\&  =  -4\lambda ^2z_2 ^4+4\im \lambda (\tilde{{\bm z}}_{R})_2 z_2 ^3 +4\im \lambda \pare{ \dot z_2- (\tilde{{\bm z}})_2 } z_2 ^3.
\end{align*}
Hence we conclude the following, which with \eqref{lem:modbound1} yields the statement,
\begin{align*}
  A_{142} &= - |z_2| ^4 \<  \mathbf{J}\mathbf{G}_2 , \mathbf{g}_{2} \>  + \frac{d}{\dd t}\Re \<  2^{-1}\lambda ^{-2} z _2 ^4\mathbf{J}\mathbf{G}_2 , \overline{\mathbf{g}}_{2} \>
   +O\pare{  z_1 ^4   } +  \delta _2    O\pare{  z_2 ^4  }  + O\pare{  |\dot {{\bm z}} -\tilde{{\bm z}} |^2} .
\end{align*}
\end{proof}

 \begin{lemma}\label{lem:esz1L4} We have
 \begin{equation}\label{eq:esz1L41}
 	 |z_1| ^2  \lesssim   \frac{d}{\dd t} \Re z_1 ^2  +    |z_2| ^4 +   \delta^{2(p-1)}     \| \sech  ( k_2 x) \boldsymbol{\eta}\|_{L^2}^2   .
 \end{equation}
 \end{lemma}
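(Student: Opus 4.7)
The approach is a direct Lyapunov-type estimate on $\Re z_1^2$, exploiting the fact that $\pm\nu_0$ are \emph{real} (nonzero) eigenvalues of $\mathbf{J}\mathbf{L}_0$. At the linear level, since $(\tilde{\bm z}_0)_1 = \nu_0 \bar z_1$ by \eqref{eq:rp--1}, one has
\begin{equation*}
\frac{d}{dt} z_1^2 = 2 z_1 \dot z_1 = 2\nu_0 z_1 \bar z_1 = 2\nu_0 |z_1|^2,
\end{equation*}
which is automatically real and coercive in $|z_1|^2$. This is the entire mechanism behind the estimate; everything else is error control.

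Concretely, I would first differentiate $\Re z_1^2$ using the full modulation decomposition $\dot{\bm z} = \tilde{\bm z}_0 + \tilde{\bm z}_R + (\dot{\bm z} - \tilde{\bm z})$ from \eqref{eq:tildez}, obtaining
\begin{equation*}
\frac{d}{dt}\Re z_1^2 = 2\nu_0 |z_1|^2 + 2\Re\bra{z_1 (\tilde{\bm z}_R)_1} + 2\Re\bra{z_1 (\dot{\bm z} - \tilde{\bm z})_1}.
\end{equation*}
Second, I would bound the two error terms. The quadratic control $|\tilde{\bm z}_R(\bm z)| \le C_1 |\bm z|^2$ from \eqref{item:rem_better_1}, combined with Young's inequality and $|z_1| \le \delta$, gives for arbitrary small $\epsilon_0>0$,
\begin{equation*}
|z_1(\tilde{\bm z}_R)_1| \lesssim |z_1|^3 + |z_1||z_2|^2 \lesssim (\delta + \epsilon_0)|z_1|^2 + C_{\epsilon_0} |z_2|^4.
\end{equation*}
The modulation bound $|\dot{\bm z} - \tilde{\bm z}| \lesssim \delta^{p-1} \|\sech(\kappa x) \boldsymbol{\eta}\|_{L^2}$ from \Cref{lem:modbound} (taking $\kappa$ sufficiently close to $k_2$; the restriction $\kappa<k_2$ in the statement is not essential for the step applied here), combined with Young, yields
\begin{equation*}
|z_1 (\dot{\bm z} - \tilde{\bm z})_1| \leq \epsilon_0 |z_1|^2 + C_{\epsilon_0} \delta^{2(p-1)} \|\sech(k_2 x) \boldsymbol{\eta}\|_{L^2}^2.
\end{equation*}

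Third, I would take $\epsilon_0$ and $\delta$ small enough that the total $|z_1|^2$ coefficient on the right side is strictly less than $2\nu_0$, allowing it to be absorbed into the leading $2\nu_0 |z_1|^2$ on the left. Rearranging then yields \eqref{eq:esz1L41}.

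The main obstacle is essentially nonexistent: the strict positivity of $\nu_0$ produces the coercive linear term for free, and the quadratic control of $\tilde{\bm z}_R$ from \Cref{lem:rpcorr} together with the weighted $L^2$ bound on the modulation error $\dot{\bm z} - \tilde{\bm z}$ from \Cref{lem:modbound} dispose of all nonlinear corrections in a single step. The only minor care concerns matching the weight $\sech(k_2 x)$ in the final statement, a matter of tracking constants in the comparable exponential weights.
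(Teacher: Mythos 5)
Your proof is correct and follows essentially the same route as the paper: differentiate $z_1^2$, split $\dot z_1 = \nu_0\bar z_1 + (\tilde{\bm z}_R)_1 + (\dot z_1 - (\tilde{\bm z})_1)$ to isolate the coercive term $2\nu_0|z_1|^2$, then dispose of the two error terms via the quadratic bound \eqref{item:rem_better_1} and Lemma \ref{lem:modbound} with Young's inequality, absorbing the small multiples of $|z_1|^2$. Your parenthetical remark on $\kappa$ versus $k_2$ is apt — the paper's own proof ends with the weight $\sech(\kappa x)$ for $\kappa < k_2$, which is nominally weaker than the $\sech(k_2 x)$ appearing in the lemma statement, but the discrepancy is harmless since downstream both are dominated by $\|\boldsymbol{\eta}\|_{L^2_{-\kappa}}$.
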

 \begin{proof} Using \eqref{eq:rp--1} and \eqref{eq:tildez}, we have
    \begin{align*}
       \frac{d}{\dd t}z_1^2 = 2\nu _0  |z_1| ^2 + 2z_1 (\dot z_1 - \nu _0 \overline{z}_1) =       2\nu _0  |z_1| ^2 +      2z_1 (\dot z_1 - ( \tilde{{\bm z}} )_1) +  2z_1 ( \tilde{{\bm z}}_R )_1.
    \end{align*}
Taking the real part of the above equation and using   Young inequality, for any $\theta>0$   we get
\begin{align*}
2\nu _0 |z_1| ^2\leq &\frac{d}{\dd t} \Re z_1 ^2+ 2\theta |z_1|^2 + \frac{1}{\theta} | \dot{{\bm z}}-\tilde{{\bm z}}|^2+ \frac{1}{\theta}| \tilde{{\bm z}}_R|^2  \\
\stackrel{\eqref{item:rem_better_1}}{\leq} & \frac{d}{\dd t} \Re z_1 ^2+ 2\theta |z_1|^2 + \frac{1}{\theta} | \dot{{\bm z}}-\tilde{{\bm z}}|^2+ \frac{2 C_1^2}{\theta}  |{\bm z_1}|^4+ \frac{2 C_1^2}{\theta}  |{\bm z_2}|^4\\
{\leq} & \frac{d}{\dd t} \Re z_1 ^2+ (2\theta+\frac{2 C_1^2}{\theta}  \delta^2) |z_1|^2 + \frac{1}{\theta} | \dot{{\bm z}}-\tilde{{\bm z}}|^2+ \frac{2 C_1^2}{\theta}  |{\bm z_2}|^4
\end{align*}
  Choosing $ \theta\defeq \frac{\nu_0}{4}$ and using $ \delta\ll 1$, we get the following, which proves \eqref{eq:esz1L41},
    \begin{equation*}
 |z_1| ^2\lesssim \frac{1}{\nu_0^2} | \dot{{\bm z}}-\tilde{{\bm z}}|^2+\frac{1}{\nu_0} \frac{d}{\dd t} \Re z_1 ^2+\frac{1}{\nu_0^2}|{\bm z_2}|^4\lesssim \frac{\delta ^{2(p-1)}}{\nu_0^2}    \norm{     \sech \pare{  \kappa x} \boldsymbol{\eta}\pare{t} } _{L^2\pare{\mathbb{R}}}^2+\frac{1}{\nu_0} \frac{d}{\dd t} \Re z_1 ^2+\frac{1}{\nu_0^2}|{\bm z_2}|^4.
    \end{equation*}

 \end{proof}

\begin{proof}[Proof of Proposition \ref{prop:FGR}]
	Integrating \eqref{eq:lem:FGR11}  for $t \in I$ and by \eqref{eq:main11},
\begin{align}\nonumber
  \| z_2  \|  ^{4}_{L^4(0,t)} &\lesssim  \left |  \left .\pare{ {\mathcal{J}}_{\mathrm{FGR}}-  \Re  \<  {2^{-1}\lambda  ^{-2}} z _2 ^4\mathbf{J}\mathbf{G}_2 , \overline{\mathbf{g}}_{2} \> }  \right ] _{0} ^{t}\right |  +\delta ^2 \| z_1  \|  ^{2}_{L^2(0,t)} +
A^{-1/2}    \|   \boldsymbol{\eta}  \|_{L^2((0,t), \boldsymbol{ \Sigma }_A  \cap   \boldsymbol{L }^2_{-\kappa})}^2\\& = o _\epsilon  (1)      \epsilon ^2  .\label{eq:esz1L421}
\end{align}
Integrating \eqref{eq:esz1L41}   for $t \in I$ and by \eqref{eq:main11} we obtain the following, which completes the proof,
 \begin{align*}
      \| z_1  \|  ^{2}_{L^2(0,t)}&\lesssim  \left |  \left . \Re z_1 ^2 \right ] _{0} ^{t}\right |  + \delta _2 ^{-1}\| z_2  \|  ^{4}_{L^4(0,t)}  +
A^{-1/2}    \| \sech  ( k_2 x) \boldsymbol{\eta}\|_{L^2 ((0,t),L^2)}^2= o _\epsilon  (1)      \epsilon ^2.
\end{align*}

\end{proof}

\section{Proof of Proposition \ref{prop:1stvirial}: the first virial estimate.}
\label{sec:1virial}

\begin{notation}
For the $\chi$ in \Cref{notation:chi} and for $C>0$, we set
\begin{align}\label{def:zetaphi}
\zeta_C(x)\defeq\exp\left(-\frac{|x|}{C}(1-\chi(x))\right), && \varphi_C(x)\defeq\int_0^x \zeta_C^2(y)\,\dd y\  &&  S_C\defeq\frac{1}{2}\varphi_C'+\varphi_C\partial_x.
\end{align}
\end{notation}
We define functionals (notice that $S_A\mathbf{J}$ and   $\sigma_3\mathbf{J}$ are selfadjoint)
\begin{align*}
&\mathcal{I}_{\mathrm{1st},1}\defeq\frac{1}{2}\< \mathbf{J} \boldsymbol{\eta},S_A\boldsymbol{\eta}\>, \\
& \mathcal{I}_{\mathrm{1st},2}\defeq\frac{1}{2}\< \mathbf{J}  \boldsymbol{\eta},\sigma_3\zeta_A^4 \boldsymbol{\eta}\> && \text{where} && \sigma_3\defeq \begin{pmatrix}
	1 & 0 \\ 0 & -1
\end{pmatrix}.
\end{align*}

\begin{lemma}\label{lem:1stV1}
There exist $\delta_0>0$, $A_0>0$ and $ 0 < \kappa_0 \ll k_2 $ s.t.  if $\delta\in\pare{0,  \delta_0 }$, for any $ \kappa\in\pare{0, \kappa_0} $ and $A>A_0$, we have
\begin{equation}
\norm{    \sech  \left(   \frac{2}{A}x\right)\eta_1'}_{L^2}^2 + A^{-2}\norm{    \sech  \left(   \frac{2}{A}x\right) \eta_1}_{L^2}^2   \lesssim \dot{\mathcal{I}}_{\mathrm{1st},1}+ \|   \sech  \left(  \kappa x\right) \boldsymbol{\eta}\|_{L^2}^2+    |{\bm z} |^4. \label{eq:lem:1stV1}
\end{equation}
\end{lemma}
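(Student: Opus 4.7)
The plan is to compute $\dot{\mathcal{I}}_{\mathrm{1st},1}$ by substituting \eqref{nuovaequi} into $\dot{\mathcal{I}}_{\mathrm{1st},1}=\langle \mathbf{J}\dot{\boldsymbol{\eta}},S_A\boldsymbol{\eta}\rangle$, which follows by symmetrizing the product rule and using $\mathbf{J}^\ast=-\mathbf{J}$ together with the antisymmetry $S_A^\ast=-S_A$ (direct from $S_A=\tfrac12\varphi_A'+\varphi_A\partial_x$). This yields the decomposition
\begin{equation*}
\dot{\mathcal{I}}_{\mathrm{1st},1} = T_{\mathrm{disp}} + T_{\mathrm{mod}} + T_{\mathrm{R}} + T_{\mathrm{nl}},
\end{equation*}
where $T_{\mathrm{disp}}=-\langle \mathbf{L}_0^{\mathrm{sch}}\boldsymbol{\eta},S_A\boldsymbol{\eta}\rangle$ with $\mathbf{L}_0^{\mathrm{sch}}=\mathrm{diag}(-\partial_x^2+1,1)$, and $T_{\mathrm{mod}}$, $T_{\mathrm R}$, $T_{\mathrm{nl}}$ come respectively from $-D_{{\bm z}}\boldsymbol{\Phi}[{\bm z}](\dot{{\bm z}}-\tilde{{\bm z}})$, $-\mathbf{R}[{\bm z}]$, and the nonlinearity $(f(\boldsymbol{\Phi}[{\bm z}]_1+\eta_1)-f(\boldsymbol{\Phi}[{\bm z}]_1))\mathbf{j}$.

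For $T_{\mathrm{disp}}$, antisymmetry kills $\langle \eta_i,S_A\eta_i\rangle$ for $i=1,2$, so the contribution reduces to $\langle \partial_x^2\eta_1,S_A\eta_1\rangle$; two integrations by parts together with $\varphi_A'=\zeta_A^2$ produce the textbook virial expression $\int\zeta_A^2(\eta_1')^2\,dx -\tfrac14\int \varphi_A'''\,\eta_1^2\,dx$, up to sign. Since $\zeta_A^2\gtrsim \sech^2(2x/A)$ globally and $|\varphi_A'''|\lesssim A^{-2}\sech^2(2x/A)$ outside $\mathrm{supp}\,\chi$, this generates the two terms on the left-hand side of \eqref{eq:lem:1stV1}, plus an $O(\|\sech(\kappa x)\boldsymbol{\eta}\|_{L^2}^2)$ error from the compact region $\{|x|\le 2\}$. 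For $T_{\mathrm{mod}}$ I would invoke \Cref{lem:modbound}, which gives $|\dot{{\bm z}}-\tilde{{\bm z}}|\lesssim \delta^{p-1}\|\sech(\kappa x)\boldsymbol{\eta}\|_{L^2}$, together with the exponential spatial decay of $D_{{\bm z}}\boldsymbol{\Phi}[{\bm z}]$, which localizes the pairing; Cauchy--Schwarz then yields a contribution dominated by $\delta^{p-1}\|\sech(\kappa x)\boldsymbol{\eta}\|_{L^2}^2$, which is absorbed thanks to the hierarchy \eqref{eq:relABg}. For $T_{\mathrm R}$, \Cref{lem:rpcorr}\eqref{item:rem_better_20} combined with the exponential decay of $\widetilde{\boldsymbol{\Phi}}[{\bm z}]$ yields $|\mathbf{R}[{\bm z}]|\lesssim |{\bm z}|^2 e^{-k_2|x|}$, so Cauchy--Schwarz and Young give $|T_{\mathrm R}|\lesssim |{\bm z}|^4+o(1)\|\boldsymbol{\eta}\|_{\boldsymbol{\Sigma}_A}^2$.

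The main obstacle is $T_{\mathrm{nl}}$, because for $p<2$ the nonlinearity $f(u)=|u|^{p-1}u$ is only $C^{1,p-1}$ while $S_A$ already consumes one derivative, forbidding a second integration by parts. I would split, as in \Cref{lem:estF},
\begin{equation*}
f(\boldsymbol{\Phi}_1+\eta_1)-f(\boldsymbol{\Phi}_1) = f'(Q)\eta_1 + \bigl[f'(\boldsymbol{\Phi}_1)-f'(Q)\bigr]\eta_1 + \bigl[f(\boldsymbol{\Phi}_1+\eta_1)-f(\boldsymbol{\Phi}_1)-f'(\boldsymbol{\Phi}_1)\eta_1\bigr].
\end{equation*}
The $f'(Q)\eta_1$ piece paired with $S_A\eta_1$ integrates by parts to $-\tfrac12\int (f'(Q))'\varphi_A\eta_1^2\,dx$, which is localized on the soliton since $(f'(Q))'=p(p-1)Q^{p-2}Q'$ is exponentially small, hence absorbed in $\|\sech(\kappa x)\boldsymbol{\eta}\|_{L^2}^2$. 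The correction $[f'(\boldsymbol{\Phi}_1)-f'(Q)]\eta_1$ carries the pointwise gain $\delta^{p-1}\sech((p-1)k_2\,x)$ of \eqref{eq:pointwise_bound}, and after Cauchy--Schwarz is absorbed using $A\delta^{p-1}\ll 1$. The quadratic remainder is the delicate term; it is handled via the case dichotomy of \eqref{eq:buuuuh1}--\eqref{eq:buuuuh2}: on $\{|\eta_1|\ll|\boldsymbol{\Phi}_1|\}$ it is bounded by $|\boldsymbol{\Phi}_1|^{p-2}\eta_1^2$, and on $\{|\eta_1|\gtrsim|\boldsymbol{\Phi}_1|\}$ by $|\eta_1|^p$. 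In both regimes one pairs directly against $S_A\boldsymbol{\eta}=\tfrac12\varphi_A'\eta_1+\varphi_A\eta_1'$, using H\"older's inequality, the $L^\infty$ smallness $\|\eta_1\|_{L^\infty}\lesssim \delta$, the uniform bound on $\varphi_A$, and the $\delta^{p-1}$ prefactor from \Cref{lem:estF} to reabsorb the result into the left-hand side by Young's inequality. The genuinely hard regime is $\{|\eta_1|\gtrsim|\boldsymbol{\Phi}_1|\}$, where integration by parts is unavailable and one must close purely through the pointwise $|\eta_1|^p$ bound, the spatial weight hidden in $S_A$, and the smallness $\delta\ll 1$.
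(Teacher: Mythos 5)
Your overall architecture is right (the decomposition from \eqref{nuovaequi}, the skew-adjointness of $S_A$, the virial commutator for $T_{\mathrm{disp}}$, \Cref{lem:modbound} for $T_{\mathrm{mod}}$, and \Cref{lem:rpcorr} together with \eqref{eq:Ijest2} for $T_{\mathrm R}$), but the treatment of $T_{\mathrm{nl}}$ has a gap that you partly flag but do not close, and that your chosen decomposition cannot close.

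You write $f(\boldsymbol{\Phi}_1+\eta_1)-f(\boldsymbol{\Phi}_1) = f'(Q)\eta_1 + [f'(\boldsymbol{\Phi}_1)-f'(Q)]\eta_1 + [f(\boldsymbol{\Phi}_1+\eta_1)-f(\boldsymbol{\Phi}_1)-f'(\boldsymbol{\Phi}_1)\eta_1]$. The first two pieces localize exponentially, fine, but the third piece on the set $\{|\eta_1|\gtrsim|\boldsymbol{\Phi}_1|\}$ — which, since $\boldsymbol{\Phi}_1$ decays like $e^{-k_2|x|}$, is essentially all of $\{|x|\gg 1\}$ — is only $O(|\eta_1|^p)$ with no spatial weight whatsoever. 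When you pair it against $S_A\eta_1 = \tfrac12\varphi_A'\eta_1 + \varphi_A\eta_1'$, the $\varphi_A'\eta_1 = \zeta_A^2\eta_1$ half is weighted and harmless, but $\varphi_A$ itself is bounded by $\sim A$ and does \emph{not} decay. There is no ``spatial weight hidden in $S_A$'' on that half. The resulting term $\int_{|x|\gtrsim A} |\eta_1|^p\,\varphi_A\,|\eta_1'|\,dx \lesssim A\delta^{p-1}\|\eta_1\|_{L^2}\|\eta_1'\|_{L^2}$ has no weighted norm on the right and, after time-integration over $[0,T]$, is $O(A\delta^{p+1}T)$, not absorbable into any term of \eqref{eq:lem:1stV1}. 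Young's inequality does not help because there is nothing on the left-hand side to absorb an unweighted $\|\eta_1'\|_{L^2}^2$.

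The paper avoids this by choosing a different subtraction: it writes $f(\boldsymbol{\Phi}_1+\eta_1)-f(\boldsymbol{\Phi}_1) = \bigl[f(\boldsymbol{\Phi}_1+\eta_1)-f(\boldsymbol{\Phi}_1)-f(\eta_1)\bigr] + f(\eta_1)$, i.e.\ it subtracts $f(\eta_1)$ rather than the linearization $f'(\boldsymbol{\Phi}_1)\eta_1$. This is the decisive idea, for two reasons. First, the unlocalized piece $\langle f(\eta_1),S_A\eta_1\rangle$ \emph{does} integrate by parts exactly, because $f(\eta_1)\eta_1' = \tfrac{1}{p+1}\partial_x|\eta_1|^{p+1}$, and one obtains
\begin{equation*}
\langle f(\eta_1),S_A\eta_1\rangle = \frac{p-1}{2(p+1)}\int_{\R}\zeta_A^2\,|\eta_1|^{p+1}\,dx,
\end{equation*}
which now carries the weight $\zeta_A^2$ for free and is bounded by $\delta^{p-1}$ times a weighted $L^2$ norm (cf.\ \eqref{bound_B32}), absorbable via \eqref{eq:KM19} and $A^2\delta^{p-1}\ll 1$. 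Second, the localized piece $f(\boldsymbol{\Phi}_1+\eta_1)-f(\boldsymbol{\Phi}_1)-f(\eta_1)$ is controlled not through a $C^2$ Taylor expansion (which the non-smooth nonlinearity does not permit globally) but through the \emph{concavity} inequality $\bigl||y+h|^{p-1}-|y|^{p-1}\bigr|\le|h|^{p-1}$, valid for $1<p\le 2$, which yields the pointwise bound $\lesssim |\boldsymbol{\Phi}_1|^{p-1}|\eta_1| \lesssim \sech\bigl(k_2(p-1)x\bigr)|\eta_1|$ (cf.\ \eqref{bonina}). This exponential localization is precisely what is needed to pair against the unweighted $\varphi_A\eta_1'$ part of $S_A$. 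You should replace your subtraction by the paper's and invoke the primitive-structure integration by parts together with the concavity estimate; your current plan does not close.
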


\begin{proof}
Using   \eqref{nuovaequi} and the fact that $ S_A$ is skew-adjoint ,  we have
\begin{equation}
\label{eq:1V1}
\begin{aligned}
\dot{\mathcal{I}}_{\mathrm{1st},1}&=\< \mathbf{J} \dot {\boldsymbol{\eta}},S_A\boldsymbol{\eta}\> =    \< \mathbf{J}S_A  D\boldsymbol{\Phi}\pare{\bm z}(\dot{{\bm z}}-\tilde{{\bm z}}),\boldsymbol{\eta}\> +\<\partial_x^2 \eta_1  ,S_A\eta_1 \>
\\
& \ + \<\left( f\left( \boldsymbol{\Phi}\pare{\bm z}_1 +\eta _1 \right)-f(\boldsymbol{\Phi}\pare{\bm z}_1) \right),S_A\eta_1\>  -\< \mathbf{J}  {\bm R}\pare{\bm z},S_A\boldsymbol{\eta}\>
 \eqdef   \
 B_1+B_2+B_3+B_4 .
\end{aligned}
\end{equation}
By Lemma 1 in Kowalczyk et al. \cite{KMM2022},
\begin{align}\label{B2}
& B_2 =     -\|\zeta_A\eta_1'\|_{L^2}^2   - \frac{1}{2} \int _\R \(   \zeta   _A \zeta '' _A - \(\zeta '  _A\) ^2    \) \eta_1 ^2 dx     \text{ with }
\\& \label{cbound}
\left |  \zeta   _A \zeta '' _A - \(\zeta '  _A\) ^2    \right | \lesssim \frac{1_{1\le |x|\le 2}}{A}.
\end{align}
Using  Lemma \ref{lem:modbound} to bound $| \dot {{\bm z}} -\tilde{{\bm z}}|$, we have, for $\kappa<k_2$,
\begin{equation}\label{B1}
|B_{1}| \le  | \dot {{\bm z}} -\tilde{{\bm z}}|  \| \sech(\kappa x)  \boldsymbol{\eta}\| _{L^2} \lesssim \delta ^{p-1}\| \sech(\kappa x)  \boldsymbol{\eta}\| _{L^2}^2.
\end{equation}
 We   decompose
 \begin{equation}\label{B4}
 	B_4=   \< \mathbf{J}S_A D_{\bm z}\boldsymbol{\Phi}\pare{\bm z}\tilde{{\bm z}}_R ,\boldsymbol{\eta}\>+ \<      f(\boldsymbol{\Phi}\pare{\bm z}_1)   - f(Q)-f' (Q)  \tilde{\boldsymbol{\Phi}}[{\bm z}]_1      ,S_A {\eta}_1\> =: B_{41}+ B_{42}.
 \end{equation}
By \Cref{lem:rpcorr}       we have
\begin{align}
 | B_{41}| = |\< \mathbf{J}S_A D_{\bm z}\boldsymbol{\Phi}\pare{\bm z}\tilde{{\bm z}}_R ,\boldsymbol{\eta}\> |
  \lesssim  & |\tilde{{\bm z}}_R| \  \| \sech(\kappa x)\boldsymbol{\eta} \| _{L^2 } \notag \\
   \lesssim    |{\bm z}|^2 \|\sech(\kappa x) \boldsymbol{\eta} \| _{L^2}
    \le &  |{\bm z}|^4+ \|\sech(\kappa x) \boldsymbol{\eta} \| _{L^2}^2.\label{bound_B41}
 \end{align}
By \eqref{eq:Ijest2}  and for $\delta_2>0$ small fixed,
\begin{align}
 | B_{42}| \lesssim & |{\bm z}|^2 \|  e^{-|x| }\varphi _A \cosh \pare{ \frac{2}{A}x} \| _{L^2} \| \sech \pare{ \frac{2}{A}x} \eta _1 '\| _{L^2}  + |{\bm z}|^2\|\sech(\kappa x)  {\eta}_1 \| _{L^2}
 \nonumber \\ \le &   \frac{2}{\delta_2} |{\bm z}|^4+ \delta_2\|\sech(\kappa x)  {\eta} _1\| _{L^2}^2 +\delta_2 \| \sech \pare{ \frac{2}{A}x} \eta _1 '\| _{L^2}^2     .\label{bound_B42}
 \end{align}
We decompose
 \begin{equation}\label{B3}
   B_3=   \< f\left( \boldsymbol{\Phi}\pare{\bm z}_1 +\eta _1 \right)-f(\boldsymbol{\Phi}\pare{\bm z}_1)- f\left(  \eta _1 \right) ,S_A {\eta}_1\> + \<  f\left(  \eta _1 \right) ,S_A {\eta}_1\>=:B_{31}+B_{32}.
 \end{equation}
For the pure non-linear term $B_{32}$, we have
\begin{align}
B_{32}=& \frac{1}{p+1}\int_{\R}  \partial_x\big(|\eta_1|^{p+1}\big) \varphi_A(x)\, {\rm d}x\, + \frac12 \int_{\R} \zeta_A^2(x) |\eta_1|^{p+1}\,{\rm d} x\notag \\
=&\frac{p-1}{2(p+1)} \int_{\R} \zeta_A^2(x) |\eta_1|^{p+1}\,{\rm d} x.
\end{align}
By     a  clever but very simple integration by parts      Kowalczyk et al. \cite{KMM2022}, see inequality (36)  therein,   get the following  bound, which  is a striking validation of their   dispersion theory,
\begin{equation}\label{bound_B32}
	|B_{32}|\lesssim \delta^{\frac{2p}{3}} \| (\zeta_A \eta_1)'\|_{L^2}^2.
\end{equation}
 Next, since for the concave function $t\to |t| ^{p-1}$ for $1<p\le 2$ we have
$$
\big||y+h|^{p-1}-|y|^{p-1}\big|\leq |h|^{p-1} \quad \text{for any} \quad y,h\in \C,
$$
we conclude
\begin{align}
|f\left( \boldsymbol{\Phi}\pare{\bm z}_1 +\eta _1 \right)-f(\boldsymbol{\Phi}\pare{\bm z}_1)- f\left(  \eta _1 \right)|\leq &\int_0^1 \big|f'(\boldsymbol{\Phi}\pare{\bm z}_1+ \tau \eta _1)- f'(\tau \eta_1)\big|{\rm d} \tau\, \cdot  | \eta_1|\notag  \\
\leq & p \av{\boldsymbol{\Phi}\pare{\bm z}_1}^{p-1} |\eta_1|\lesssim \sech( k_2(p-1) x ) | \eta_1|.\label{bonina}
\end{align}
Then, for $ \tfrac{2}{A}<\kappa< k_2 (p-1)/2$, we have
\begin{align}
  |B_{31}| \lesssim \int_{\R} \sech^2(\kappa x ) | \eta_1| | \eta_1'| +\int_{\R} \sech^2( \kappa x ) | \eta_1|^2\leq (1+\tfrac{1}{\delta_2})\|\sech(\kappa x ) \eta_1\|_{L^2}^2+ \delta_2\| \sech\left(\frac{2}{A}x\right) \eta_1'\|_{L^2}^2. \label{B31}
\end{align}
Collecting all the terms in \cref{eq:1V1,B2,B4,B3}  and their bounds in \cref{B1,bound_B41,bound_B42,bound_B32,B31}, we get
\begin{align*}
&\|(\zeta_A \eta_1)'\|_{L^2}^2\lesssim   -\dot{\mathcal{I}}_{\mathrm{1st},1}
 +\|\sech  \left(  \kappa x\right)\boldsymbol{\eta}\|_{L^2}^2+  |{\bm z} |^4+\delta_2\| \sech\big(\frac{2}{A}x\big) \eta_1'\|_{L^2}^2.
\end{align*}
  Finally, we use the following, which is analogous to  (19) of \cite{KM22} and is proved in  \cite{CMS2023}, see formula (6.5),
\begin{equation}\label{eq:KM19}
\| \sech  \left(   \frac{2}{A}x\right) \eta_1'\|_{L^2}^2+ \frac{1}{A^2}\| \sech  \left(   \frac{2}{A}x\right) \eta_1\|_{L^2}^2\lesssim  \| (\zeta_A \eta_1)'\|_{L^2}^2 + \frac{1}{A}\|\sech  \left(  \kappa x\right)\eta_1\|_{L^2}^2.
\end{equation}
This, choosing $\delta_2>0$ small, yields \eqref{eq:lem:1stV1}.
\end{proof}

\begin{lemma}\label{lem:1stV2}
With the same hypothesis of \Cref{lem:1stV1}, we have
\begin{align}
\norm{\sech  \left(   \frac{2}{A}x\right)\eta_2}_{L^2}^2  \lesssim  \dot{\mathcal{I}}_{\mathrm{1st},2}+\norm{\sech  \left(   \frac{2}{A}x\right) \eta_1' }_{L^2}^2+ \norm{ \sech  \left(   \frac{2}{A}x\right)\eta_1}_{L^2}^2+ \norm{\boldsymbol{\eta} }_{L^2_{-\kappa}}^2+ |{\bm z} |^4.\label{eq:lem:1stV2}
\end{align}

\end{lemma}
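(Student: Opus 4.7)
The plan is to mirror the strategy of \Cref{lem:1stV1}, but now with the auxiliary functional $\mathcal{I}_{\mathrm{1st},2}$, whose role is to generate an $\eta_2$--coercive term. First I would rewrite the functional in scalar form: since $\mathbf{J}\sigma_3 = \begin{pmatrix} 0 & -1 \\ -1 & 0 \end{pmatrix}$, one has $\mathcal{I}_{\mathrm{1st},2} = \int_{\mathbb{R}} \zeta_A^4 \, \eta_1 \eta_2 \, dx$. Differentiating in time and substituting $\dot{\boldsymbol{\eta}}$ from \eqref{nuovaequi}, whose principal part is $\dot\eta_1 = \eta_2 + (\cdots)$ and $\dot\eta_2 = (\partial_x^2-1)\eta_1 + (f(\bm{\Phi}[{\bm z}]_1+\eta_1)-f(\bm{\Phi}[{\bm z}]_1)) + (\cdots)$, I obtain schematically
\begin{align*}
\dot{\mathcal{I}}_{\mathrm{1st},2} = \int \zeta_A^4 \eta_2^2 \, dx - \int \zeta_A^4 (\eta_1')^2 \, dx - 4\int \zeta_A^3 \zeta_A' \, \eta_1 \eta_1' \, dx - \int \zeta_A^4 \eta_1^2 \, dx + E_{\mathrm{mod}} + E_{\bm R} + E_{\mathrm{nl}},
\end{align*}
with error terms $E_{\mathrm{mod}}, E_{\bm R}, E_{\mathrm{nl}}$ arising respectively from the modulation $D_{\bm z}\bm{\Phi}[{\bm z}](\dot{\bm z}-\tilde{\bm z})$, the refined remainder ${\bm R}[{\bm z}]$, and the nonlinearity $f(\bm{\Phi}+\eta_1)-f(\bm{\Phi})$. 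Since $\zeta_A^2(x) \sim \sech(\tfrac{2}{A}x)$ uniformly on $\mathbb{R}$, the first term controls $\|\sech(\tfrac{2}{A}x)\eta_2\|_{L^2}^2$ from below up to an absolute constant, and the claimed estimate \eqref{eq:lem:1stV2} will follow by rearranging and dominating every other contribution by the norms on the right-hand side.

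Next I would estimate the linear quadratic tail. Dropping the $-\int \zeta_A^4 (\eta_1')^2$ since it has the favourable sign, and using $|\zeta_A'|\lesssim A^{-1}\zeta_A$, Young's inequality gives
\begin{align*}
\Bigl| 4\int \zeta_A^3 \zeta_A' \eta_1 \eta_1' \, dx \Bigr| \lesssim A^{-1}\bigl\|\sech(\tfrac{2}{A}x)\eta_1\bigr\|_{L^2}\bigl\|\sech(\tfrac{2}{A}x)\eta_1'\bigr\|_{L^2},
\end{align*}
absorbable in the right-hand side of \eqref{eq:lem:1stV2}. The term $\int \zeta_A^4 \eta_1^2$ is bounded by $\|\sech(\tfrac{2}{A}x)\eta_1\|_{L^2}^2$ directly. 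The modulation error $E_{\mathrm{mod}}$ is pairing $\mathbf{J} D_{\bm z}\bm{\Phi}[{\bm z}](\dot{\bm z}-\tilde{\bm z})$ against $\sigma_3\zeta_A^4\boldsymbol{\eta}$: invoking \Cref{lem:modbound} and the exponential decay of $D_{\bm z}\bm{\Phi}[{\bm z}]$ yields $|E_{\mathrm{mod}}|\lesssim \delta^{p-1}\|\sech(\kappa x)\boldsymbol{\eta}\|_{L^2}^2$, which is absorbed in $\|\boldsymbol{\eta}\|_{L^2_{-\kappa}}^2$ using $\delta\ll 1$.

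For the remainder I would split ${\bm R}[{\bm z}] = D_{\bm z}\bm{\Phi}[{\bm z}]\,\tilde{\bm z}_R + \hat{\bm R}[{\bm z}]$ as in \eqref{eq:errp2}. Using \Cref{lem:rpcorr}(ii) the first piece contributes $\lesssim |{\bm z}|^2 \|\sech(\kappa x)\boldsymbol{\eta}\|_{L^2} \le |{\bm z}|^4 + \|\boldsymbol{\eta}\|_{L^2_{-\kappa}}^2$; the second piece is estimated via the pointwise bound \eqref{eq:Ijest2} to give the same kind of contribution. Finally for the nonlinearity $E_{\mathrm{nl}} = \int \zeta_A^4 \eta_1\bigl(f(\bm{\Phi}+\eta_1)-f(\bm{\Phi})\bigr)\, dx$, I would decompose as in \eqref{B3} writing $f(\bm{\Phi}+\eta_1)-f(\bm{\Phi}) = f(\eta_1) + \bigl(f(\bm{\Phi}+\eta_1)-f(\bm{\Phi})-f(\eta_1)\bigr)$. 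The pure monomial part is bounded by $\delta^{p-1}\|\sech(\tfrac{2}{A}x)\eta_1\|_{L^2}^2$, exactly as in \eqref{bound_B32}; the difference is bounded using \eqref{bonina} and the exponential decay $\sech((p-1)k_2 x)$, giving $\lesssim \|\sech(\kappa x)\eta_1\|_{L^2}^2$ for $\kappa < (p-1)k_2/2$, all of which fit inside the right-hand side of \eqref{eq:lem:1stV2}.

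The main obstacle is the bookkeeping for the Taylor remainders of $f$ when $p \in (5/3, 2]$: unlike the smooth regime, the second derivative $f''$ is only a $Q^{p-2}$--weight and one must repeatedly balance this growth against the decay of $\varphi_2$ and of $\zeta_A$. Concretely, one has to verify that every cross term with weight $\zeta_A^4$ multiplied by a singular piece of $f$ still lands either in the $\boldsymbol{\Sigma}_A$ seminorm of $\boldsymbol{\eta}$ or in $\|\boldsymbol{\eta}\|_{L^2_{-\kappa}}^2$; this uses the hierarchy $A^{-1}\gg\delta^{p-1}$ from \eqref{eq:relABg} exactly as in the closure of \Cref{lem:1stV1}. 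Once those absorptions are performed and \eqref{eq:KM19} is applied to upgrade $\eta_1$--only weighted norms if necessary, the estimate \eqref{eq:lem:1stV2} follows.
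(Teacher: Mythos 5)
Your proposal is correct and follows essentially the same route as the paper: differentiate $\mathcal{I}_{\mathrm{1st},2}$, substitute \eqref{nuovaequi}, identify the coercive term $\int\zeta_A^4\eta_2^2$ inside what the paper calls $C_2$, integrate by parts on the $(-\partial_x^2+1)\eta_1$ piece, and bound the modulation, ${\bm R}$, and nonlinear contributions exactly as you describe (the paper's $C_1,C_4,C_3$, with \Cref{lem:modbound}, \Cref{lem:rpcorr}, \eqref{eq:Ijest2} and the pointwise bound $|f(\boldsymbol{\Phi}+\eta_1)-f(\boldsymbol{\Phi})|\le p|\boldsymbol{\Phi}|^{p-1}|\eta_1|+|\eta_1|^p$). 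Writing the functional in scalar form $\int\zeta_A^4\eta_1\eta_2$ and noting the favourable sign of $-\int\zeta_A^4(\eta_1')^2$ are cosmetic; the paper simply absorbs that term into the right-hand side, and neither proof needs \eqref{eq:KM19} here.
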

\begin{proof}
We have, from \cref{nuovaequi}
\begin{align*}
\dot{\mathcal{I}}_{\mathrm{1st},2} & =-\< \mathbf{J} D_{{\bm z}}\boldsymbol{\Phi}\pare{\bm z}(\dot{{\bm z}}-\tilde{{\bm z}}),\sigma_3\zeta_A^4 \boldsymbol{\eta} \>
-\< \begin{pmatrix}
	-\partial_{x}^2+ 1 &0\\0&1
\end{pmatrix} \boldsymbol{\eta}
,
\sigma_3\zeta_A^4 \boldsymbol{\eta}\>
\\
& \qquad +  \<   \left( f\left( \boldsymbol{\Phi}\pare{\bm z}_1 +\eta _1 \right)-f(\boldsymbol{\Phi}\pare{\bm z}_1) \right),
\zeta_A^4  {\eta}_1\> -\< \mathbf{J}  {\bm R}\pare{\bm z},
\sigma_3\zeta_A^4 \boldsymbol{\eta}\>\\
&=:  \  C_1+C_2+C_3+C_4
\end{align*}
For the main term $C_2$, we have
\begin{equation}\label{eq:C2_bound1}
C_2 =- \<(-\partial_{x}^2+ 1) \eta_1, \zeta_A^4\eta_1\> + \norm{\zeta_A^2\eta_2} _{L^2}^2,
\end{equation}
with
\begin{align}
\label{eq:C2_bound2}
|\<(-\partial_{x}^2+ 1)\eta_1,\zeta_A^4\eta_1\>|\lesssim \norm{\sech  \left(   \frac{2}{A}x\right) \eta_1'}_{L^2}^2+\norm{\sech  \left(   \frac{2}{A}x\right) \eta_1}_{L^2}^2.
\end{align}
We consider  the remainder terms, starting with $ C_1 $. By \Cref{lem:modbound} and
\begin{equation}\label{scemetta}
	\av{D_\bmz \bmPhi\pare{\bmz}(w)\zeta_A^4}\lesssim  \sech\left(  \frac{4}{A}x\right)\sech  \left(  \kappa x\right) \av{w}, \quad \forall w \in \C^2,
\end{equation} we obtain
\begin{align}\label{eq:C1_bound}
|C_1|&\lesssim |\dot{{\bm z}}-\tilde{{\bm z}}|\norm{\sech  \left(  \kappa x\right)\boldsymbol{\eta}}_{L^2}\lesssim \delta ^{p-1} \norm{\boldsymbol{\eta}}_{L^2_{-\kappa}}^2 .
\end{align}
We next focus on $C_4$, with
\begin{equation}
\label{eq:C4_decomposition}
 \begin{aligned}
 C_4 & \ = C_{41}+ C_{42} , \\
 C_{41} & \ \defeq -\< \mathbf{J} D_{\bm z}\boldsymbol{\Phi}\pare{\bm z}\tilde{{\bm z}}_R ,  \sigma_3\zeta_A^4 \boldsymbol{\eta} \>
 \\
  C_{42}& \ \defeq  \<{      f(\boldsymbol{\Phi}\pare{\bm z}_1)   - f(Q)-f' (Q)  \tilde{\boldsymbol{\Phi}}[{\bm z}]_1}, {\sigma_3\zeta_A^4 \boldsymbol{\eta}} \>
 \end{aligned}
\end{equation}
Combining  \eqref{scemetta} and \eqref{item:rem_better_1}, we get
\begin{align}\label{eq:C41_bound}
 | C_{41}| \le \av{\bm{z}}^2\norm{\boldsymbol{\eta}}_{L^2_{-\kappa}}\le  |{\bm z}|^4+ \| \boldsymbol{\eta} \| _{L^2_{- \kappa}}^2.
 \end{align}
By \eqref{eq:Ijest2}, we have
\begin{equation}\label{eq:C42_bound}
  | C_{42}|\leq \av{\bm{z}}^2\norm{\boldsymbol{\eta}}_{L^2_{-\kappa}} \le      |{\bm z}| ^4 +   \norm{\boldsymbol{\eta}}^2_{L^2_{-\kappa}} ,
\end{equation}
so that \cref{eq:C4_decomposition,eq:C42_bound,eq:C41_bound} give that
\begin{equation} \label{eq:C4_bound}
|C_4| \lesssim \av{\bmz}^4
+ \norm{   \boldsymbol{\eta}  } ^2 _{L^2_{-\kappa}}.
\end{equation}
To bound $C_3$, we notice that by
$$
 \av{f\left( \boldsymbol{\Phi}\pare{\bm z}_1 +\eta _1 \right)-f(\boldsymbol{\Phi}\pare{\bm z}_1)}\leq p \av{\boldsymbol{\Phi}\pare{\bm z}_1}^{p-1} | \eta_1| + \av{\eta_1}^p
$$
 we obtain \begin{equation}\label{loveroC3}
	|C_3| \lesssim \| \sech\left(\kappa x\right)\eta_1\|_{L^2}^2+ \delta^{p-1}\| \sech\left(\frac{2}{A} x\right)\eta_1\|_{L^2}^2.
\end{equation}
Collecting the estimates in \cref{eq:C2_bound1,eq:C2_bound2,eq:C1_bound,loveroC3}, we have the conclusion.
\end{proof}

\begin{proof}[Proof of Proposition \ref{prop:1stvirial}]
It is immediate from  $|\mathcal{I}_{\mathrm{1st},1}|\lesssim A\delta^2$, $|\mathcal{I}_{\mathrm{1st},2}|\lesssim \delta^2$,   Lemmas \ref{lem:1stV1} and \ref{lem:1stV2} and the definition \eqref{eq:normA} of
$\| \cdot \| _{ \boldsymbol{ \Sigma }_A}.$
\end{proof}

\section{Technical estimates on the Darboux transform}\label{sec:tech}

 For the $N$  in \eqref{def:valueN}   and  the  $ \mathcal{A}$ in \eqref{def:calA}, we consider
\begin{align} \label{def:Tg} &
\mathcal{T}\defeq\<\im \varepsilon \partial_x\>^{- \pare{1+N} }\mathcal{A}^\ast.
\end{align}
We will use the following   formula proved in \cite{CM2022}, with $L^2_c(L_0)$ the continuous spectrum component in $L^2(\R , \C)$  of $L_0$ and $P_c$ the corresponding orthogonal projection,
\begin{align}\label{eq:Tinverse}
  {\bm u}=\prod_{j=0}^{N}R _{L_0}(\mu _j ) P_c \mathcal{A} \<   \im \varepsilon\partial_x\>^{N+1}  \mathcal{T} {\bm u} && \forall \ {\bm u}\in L^2_c(L_0).
\end{align}
In  \cite{CM2022}  the following result  is proved.
\begin{lemma}\label{lem:coer6A}
There exists a $ 0 < \kappa_0 \ll k_2  $ such that for any $ \kappa\in\pare{0, \kappa_0} $ there exists a constant $C_\kappa $ such that  for all  $0<\varepsilon\le  1$ and $\bm{w} \in L ^{2}_{-\frac{\kappa}{2}} $
\begin{align}\label{eq:coer6A1} \norm{
   \prod_{j=0}^{N}R _{L_0}(\mu _j ) P_c \mathcal{A}^* \<   \im \varepsilon\partial_x\>^{N+1}   \bm{w}  } _{L ^{2}_{-\kappa}} \le  C_\kappa    \|
     \bm{w}  \| _{L ^{2}_{-\frac{\kappa}{2}}} .
\end{align}
\end{lemma}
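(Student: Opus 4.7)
The plan is to exploit the Darboux intertwining $L_0\mathcal{A}^* = \mathcal{A}^* L_{N+1}$, obtained from \eqref{eq:DarConj2} by taking adjoints, to transport the resolvents from $L_0$ (where each $\mu_j$ is a discrete eigenvalue, forcing the presence of $P_c$) to $L_{N+1}$, where the situation is qualitatively better: by the repulsivity in \eqref{eq:repuls} (or by the parity/$L_3=-\partial_x^2+1$ structure when $p=2$), the operator $L_{N+1}$ has no discrete spectrum, so each $\mu_j = 1-k_j^2 <1$ lies strictly below the essential spectrum $[1,\infty)$ of $L_{N+1}$. Consequently $R_{L_{N+1}}(\mu_j)$ is a bounded pseudodifferential operator of order $-2$ whose Green's function decays exponentially at rate $\sqrt{1-\mu_j} \ge k_N$ in $|x-y|$.

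First I would establish that $\mathrm{Range}(\mathcal{A}^*) \subseteq P_c L^2(\R)$, equivalently $P_c\mathcal{A}^* = \mathcal{A}^*$. This holds because $\mathrm{Range}(\mathcal{A}^*) = (\ker \mathcal{A})^\perp$ and, from the factorization $\mathcal{A} = S_0\cdots S_N$ together with \eqref{eq:eigC}, one verifies that $\ker \mathcal{A} = \Span{\varphi_0,\dots,\varphi_N}$. Combined with the intertwining $(L_0-\mu_j)\mathcal{A}^* = \mathcal{A}^*(L_{N+1}-\mu_j)$ and the invertibility of $L_{N+1}-\mu_j$ on $L^2$, this yields
\begin{align*}
R_{L_0}(\mu_j) P_c \mathcal{A}^* = \mathcal{A}^* R_{L_{N+1}}(\mu_j) .
\end{align*}
Iterating, the operator on the left side of \eqref{eq:coer6A1} reduces to $\mathcal{A}^* T_\varepsilon$, where $T_\varepsilon \defeq \prod_{j=0}^{N} R_{L_{N+1}}(\mu_j) \langle \im\varepsilon\partial_x\rangle^{N+1}$.

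Next, since $\mathcal{A}^*$ is a differential operator of order $N+1$ with bounded smooth coefficients (polynomials in $\tanh(\tfrac{p-1}{2}x)$ and their derivatives), it maps $H^{N+1}(\R)$ to $L^2(\R)$ and commutes with $\sech(\kappa x)$ up to bounded lower-order terms; so it suffices to prove
\begin{align*}
\bigl\lVert \sech(\kappa x)\, T_\varepsilon \bm{w}\bigr\rVert_{H^{N+1}(\R)} \le C_\kappa \lVert \bm{w}\rVert_{L^2_{-\kappa/2}} \qquad \text{uniformly in } \varepsilon\in(0,1].
\end{align*}
The $N+1$ resolvents together provide $2(N+1)$ orders of smoothing, while $\langle \im\varepsilon\partial_x\rangle^{N+1}$ costs only $N+1$ orders uniformly in $\varepsilon$ (since $\langle \im\varepsilon\xi\rangle^{N+1}\le\langle\xi\rangle^{N+1}$), yielding a net gain of $N+1$ derivatives. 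The exponential weight estimate then follows from a Schur test applied to the integral kernel of $T_\varepsilon$, exploiting the $e^{-k_N|x-y|}$ decay of the composed resolvent kernels, which for $\kappa<k_N$ is strictly faster than the growth $e^{\kappa|x|}$ of the reciprocal weight.

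The main technical obstacle is the interaction between the non-local Fourier multiplier $\langle \im\varepsilon\partial_x\rangle^{N+1}$ and the exponential weights: conjugating $\langle \im\varepsilon\partial_x\rangle^{N+1}$ by $e^{\kappa|x|}$ produces a symbol unbounded as $\xi\to\infty$, so the multiplier does not preserve $L^2_{-\kappa}$ directly. This is exactly why the input weight $\kappa/2$ is strictly smaller than the output weight $\kappa$: the gap provides the slack needed for the strict exponential decay of the resolvent Green's functions to absorb the non-local spreading caused by $\langle \im\varepsilon\partial_x\rangle^{N+1}$, uniformly in $\varepsilon\in(0,1]$.
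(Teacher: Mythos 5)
The paper gives no argument for this lemma; it is stated as a black box with the proof deferred to \cite{CM2022}, so there is no in-paper proof to compare against. I will assess your sketch on its merits.

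Your overall strategy is sound and almost certainly close to what is done in \cite{CM2022}: take the adjoint of \eqref{eq:DarConj2} to get $L_0\mathcal{A}^*=\mathcal{A}^*L_{N+1}$, use $\mathcal{A}\varphi_j=0$ (which holds because $\mathcal{A}\varphi_j$ would otherwise be an $L^2$ eigenfunction of $L_{N+1}\ge 1$ at energy $\mu_j<1$) to deduce $P_c\mathcal{A}^*=\mathcal{A}^*$ and hence $R_{L_0}(\mu_j)P_c\mathcal{A}^*=\mathcal{A}^*R_{L_{N+1}}(\mu_j)$, and then trade the $2(N+1)$ orders of resolvent smoothing against the $N+1$ orders lost to $\langle \im\varepsilon\partial_x\rangle^{N+1}$, uniformly in $\varepsilon\le 1$, with the $\kappa/2$--versus--$\kappa$ weight gap absorbing the nonlocality. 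All of this is correct reasoning and it is the right way to exploit the fact that $L_{N+1}$ has no point spectrum.

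Two things you should address. First, a structural one: the lemma as printed has $\mathcal{A}^*$, but \eqref{eq:Tinverse}, which is where the lemma is applied (to obtain \eqref{eq:key1}), has $\mathcal{A}$ in exactly this slot. Your argument is specific to the $\mathcal{A}^*$ case: $P_c\mathcal{A}^*=\mathcal{A}^*$ rests on $\mathcal{A}\varphi_j=0$, while $P_c\mathcal{A}\ne\mathcal{A}$ in general and $\mathcal{A}L_0=L_{N+1}\mathcal{A}$ moves resolvents in the wrong direction for the operator $\prod_jR_{L_0}(\mu_j)P_c\,\mathcal{A}$. If the intended operator is the one with $\mathcal{A}$, you would instead need a direct Agmon-type bound on the kernel of the reduced resolvent $R_{L_0}(\mu_j)P_c$ itself, not the clean intertwined form. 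Second, the final step is underspecified: a Schur bound on the kernel of $T_\varepsilon$ only yields an $L^2_{-\kappa/2}\to L^2_{-\kappa}$ estimate, whereas to undo the loss from $\mathcal{A}^*$ you need $L^2_{-\kappa/2}\to H^{N+1}_{-\kappa}$. The top component $\partial_x^{N+1}T_\varepsilon$ is an order-zero operator and does not have an integrable kernel, so a pure kernel argument fails there; the cleaner route is to conjugate, using $e^{-ax}\langle\im\varepsilon\partial_x\rangle^{N+1}e^{ax}=\langle\im\varepsilon(\partial_x+a)\rangle^{N+1}$ (well defined and uniformly bounded on $H^{N+1}\to L^2$ for $|\varepsilon a|<1$, hence here since $\kappa\ll 1$), together with the conjugated mapping property of $\prod_j R_{L_{N+1}}(\mu_j)$ between weighted Sobolev spaces. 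Finally, note that the relevant Agmon rate is $k_N$ (which equals $k_3=2-p\to 0$ as $p\to 2^-$), not $k_2$; this is compatible with the lemma's ``there exists $\kappa_0\ll k_2$'' but the dependence of $\kappa_0$ on $p$ should be made explicit.
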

\qed

In \cite[Lemma 7.5]{CMS2023}, the following is proved.

\begin{lemma} \label{lem:KM1}
there exists $ A_0, \varepsilon_0 > 0 $ such that for any $ A > A_0 $ and $ \varepsilon\in\pare{0, \varepsilon_0} $ and for any $u\in H^1$ we have
\begin{align}\label{eq:KM1}
\norm{ \sech \left(   \frac{4}{A}    x \right)    \mathcal{T} u}_{L^2}\lesssim & \
 \varepsilon^{-\pare{1+N}}\norm{\sech \left(    \frac{2}{A}    x\right)u}_{L^2},\\ \label{eq:KM2}
\norm{ \sech \left(    \frac{4}{A}    x  \right) \partial_x\mathcal{T} u }_{L^2}\lesssim & \
 \varepsilon^{-\pare{1+N}}\norm{ \sech \left(  \frac{2}{A}    x\right)u'}_{L^2}+ \varepsilon^{- N }\norm{ u}_{L^2_{-\kappa}}.
\end{align}

\end{lemma}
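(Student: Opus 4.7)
My plan is to reduce both estimates to weighted $L^{2}$ bounds on Fourier multipliers that are localized at scale $\varepsilon$, using the fact that the weights $\sech(2\cdot/A),\sech(4\cdot/A)$ vary on the much larger scale $A$. I would first expand
\[
\mathcal{A}^{*}=S_N^{*}\cdots S_0^{*}=\sum_{k=0}^{N+1}a_k(x)\partial_x^{k},
\]
with $a_{N+1}$ constant (it equals $(-1)^{N+1}$) and every other $a_k$ a polynomial in $\tanh(\tfrac{p-1}{2}\cdot)$ and $\sech^{2}(\tfrac{p-1}{2}\cdot)$; in particular every $a_k$ lies in $W^{\infty,\infty}(\R)$, and those coefficients that come from differentiating a $\tanh$ factor inherit $\sech^{2}$-exponential localization. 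Commuting $a_k$ past $\langle \im\varepsilon\partial_x\rangle^{-(1+N)}$ costs an error of semiclassical order $\varepsilon$ and is strictly lower than the leading $\varepsilon^{-(1+N)}$, so it suffices to analyze the clean Fourier multiplier $\langle \im\varepsilon\partial_x\rangle^{-(1+N)}\partial_x^{k}$, whose symbol is $\varepsilon^{-k}M_k(\varepsilon\xi)$ with $M_k(\eta)=(\im\eta)^{k}(1+\eta^{2})^{-(1+N)/2}\in L^{\infty}(\R)$.

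For \eqref{eq:KM1}, Plancherel immediately delivers the unweighted bound $\varepsilon^{-k}\leq\varepsilon^{-(1+N)}$. For the weighted version I would use the convolution representation: for $k\leq N$ the kernel is of the form $\varepsilon^{-(k+1)}K_k(\cdot/\varepsilon)$, with $K_k$ smooth and exponentially decaying (inherited from the Bessel kernel of $(1+\eta^{2})^{-(1+N)/2}$); for $k=N+1$ the symbol $M_{N+1}$ tends at infinity either to a constant (if $N+1$ is even) or to $\mathrm{sign}(\xi)$ times a constant (if $N+1$ is odd), so the kernel splits as a delta measure or a Hilbert-transform principal value of size $\varepsilon^{-(N+1)}$, plus a smooth exponentially decaying tail. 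Because $\varepsilon\ll A$ (guaranteed by taking $\varepsilon_0/A_0$ small), the ratio $\sech(4x/A)/\sech(2y/A)\lesssim e^{2|x-y|/A}$ pairs against the smooth tail so that Schur's test yields a uniform bound, while the delta contribution is controlled by the trivial inequality $\sech(4x/A)\leq\sech(2x/A)$ and the Hilbert-transform contribution is bounded on $\sech$-weighted $L^{2}$ by Calderón–Zygmund theory with $A_{2}$ constants uniform in $\varepsilon$; summing over $k$ gives the stated estimate.

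For \eqref{eq:KM2} I would use the commutator identity
\[
\partial_x\mathcal{T}=\mathcal{T}\partial_x+\langle\im\varepsilon\partial_x\rangle^{-(1+N)}[\partial_x,\mathcal{A}^{*}].
\]
Applying the first term to $\partial_x u$ and invoking \eqref{eq:KM1} produces the $\varepsilon^{-(1+N)}\|\sech(2x/A)u'\|_{L^{2}}$ contribution. The commutator $[\partial_x,\mathcal{A}^{*}]=\sum_{k\leq N}b_k(x)\partial_x^{k}$ has each $b_k$ equal to a derivative of a $\tanh$ factor of $\mathcal{A}^{*}$, hence decaying like $\sech^{2}(\tfrac{p-1}{2}x)$; composing with $\langle\im\varepsilon\partial_x\rangle^{-(1+N)}$ then yields an operator of net order $-1$. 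For any $0<\kappa<\tfrac{p-1}{2}$, the $\sech^{2}$-localization of $b_k$ allows moving the $\partial_x^{k}$ to the test function by duality (it is absorbed by the order $N+1$ smoothing), which yields the $\|u\|_{L^{2}_{-\kappa}}$ bound without any extra $\varepsilon^{-(1+N)}$ factor, the weights being again handled by the Schur argument.

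The main obstacle throughout is the weighted estimate at top order $k=N+1$ in \eqref{eq:KM1}: the Fourier symbol there does not decay at infinity, so the convolution kernel is not merely $L^{1}$ at scale $\varepsilon$ but contains a genuinely singular piece. Isolating this delta/principal-value piece cleanly, verifying that the smooth remainder is integrable with exponential decay uniformly in $\varepsilon$, and confirming that $\sech(\cdot/A)$ is a Muckenhoupt $A_{2}$ weight with constants independent of $A$ and $\varepsilon$, is the technical heart of the argument and what \cite[Lemma 7.5]{CMS2023} actually carries out in detail.
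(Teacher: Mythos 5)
Your framework---expand $\mathcal{A}^*=\sum_{k\leq N+1}a_k\partial_x^k$ with $a_{N+1}=(-1)^{N+1}$ constant, reduce to the semiclassical multipliers $M_k(\varepsilon D)$ with $M_k(\eta)=(\im\eta)^k\langle\eta\rangle^{-(1+N)}$, use $\varepsilon\ll A$, and treat the non-decaying symbol at $k=N+1$ separately---is the right one, and the commutator step for \eqref{eq:KM2} is sound in outline. The gap is in your treatment of the top-order singular piece, which you yourself flag as the crux.

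First, $\sech(cx)$ is \emph{not} a Muckenhoupt $A_2$ weight for any $c>0$: on $I=[-R,R]$,
$\pare{\tfrac{1}{|I|}\int_I\sech^2(c\cdot)}\pare{\tfrac{1}{|I|}\int_I\cosh^2(c\cdot)}\gtrsim e^{2cR}/R^2\to\infty$.
And the global Hilbert transform $H$ genuinely fails the estimate you need: if $v$ is supported on $[R,R+1]$ with $\int v\sim 1$ and $u=\cosh(2\cdot/A)v$, then near $x=0$ one has $|\sech(4x/A)\,Hu(x)|\sim e^{2R/A}/R\to\infty$ as $R\to\infty$, while $\|\sech(2\cdot/A)u\|_{L^2}=\|v\|_{L^2}\sim 1$. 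So writing $M_{N+1}(\varepsilon D)$ as $\varepsilon^{-(N+1)}$ times (a constant multiple of) $H$ plus a tame remainder, and invoking weighted CZ theory for $H$, cannot work. Relatedly, that remainder would \emph{not} be exponentially decaying: $H$ has kernel $\mathrm{p.v.}\,c/z$, decaying only as $1/|z|$, so subtracting it from any exponentially localized kernel leaves a $1/|z|$ tail, for which Schur again fails against the weight ratio $e^{2|x-y|/A}$.

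What actually makes the lemma true is that $\check M_{N+1}$ is itself $\varepsilon$-localized. Since $M_{N+1}$ extends holomorphically to a horizontal strip of width $1$ and its derivatives decay polynomially, its inverse Fourier transform $\check M_{N+1}(z)$ decays exponentially for $|z|\geq 1$, and its only singularity at $z=0$ is a simple $c/z$ (for $N+1$ odd, i.e.\ $p=2$) or a Dirac mass (for $N+1$ even, i.e.\ $5/3<p<2$). The kernel of $M_{N+1}(\varepsilon D)$ is $\varepsilon^{-1}\check M_{N+1}(z/\varepsilon)$: a truncated Calder\'on--Zygmund kernel concentrated at scale $\varepsilon$, not a global one. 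Conjugating by $\sech(4x/A)\,(\cdots)\,\cosh(2y/A)$ costs a factor bounded by $e^{2|x-y|/A}$; since $\varepsilon\ll A$ this is harmless on the exponential tail (Schur), while on the near-diagonal singular part $|x-y|\lesssim\varepsilon$ the weight ratio is $1+O(\varepsilon/A)$ and the unweighted multiplier is bounded on $L^2$ uniformly in $\varepsilon$ by Plancherel. No $A_2$ theory is needed, which is fortunate, because it is not available here.

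A smaller point worth double-checking against the reference: in the commutator step you assert the $\|u\|_{L^2_{-\kappa}}$ term comes with no $\varepsilon$-power, but $\langle\im\varepsilon\partial_x\rangle^{-(1+N)}\partial_x^k$ for $k\leq N$ still has operator norm $\sim\varepsilon^{-k}$; the cancellation of these powers requires a more explicit argument than ``absorbed by the order $N+1$ smoothing.''
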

\qed

In \cite[Lemma 7.6]{CMS2023}   the following, obvious for $ V_{N+1} =0$,  is proved.

\begin{lemma} \label{lem:KM2}
For any $u\in H^1$,
\begin{align}  \label{eq:KM3}
\norm{     \comm{\<\im \varepsilon \partial_x\>^{-\pare{1+N}}}{V_{N+1}}\mathcal{A} u }_{L^2}& \ \lesssim
 \varepsilon \norm{ \mathcal{T}u} _{L^2_{-\kappa}}, \\
   \label{eq:KM3b} \norm{  \cosh \left( \frac{\kappa}{2} x\right)   \comm{\<\im \varepsilon \partial_x\>^{-\pare{1+N}}}{V_{N+1}} \mathcal{A} u}_{L^2}& \ \lesssim
 \varepsilon \norm{ \mathcal{T}u } _{L^2_{-\frac{\kappa}{2}}}.
\end{align}
\end{lemma}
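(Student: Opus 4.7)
The case $p = 2$ is trivial: by \eqref{eq:valk}, $k_3 = 2 - p = 0$, hence by \eqref{eq:opLj} the potential $V_{N+1} = V_3$ vanishes identically (with $N = 2$), and both commutators in \eqref{eq:KM3}--\eqref{eq:KM3b} are zero. I therefore restrict to $2 > p > 5/3$, where $N = 3$ and $V_4(x) = -k_3 k_4 \frac{2}{p-1} Q(x)^{p-1}$ is smooth with $|V_4^{(j)}(x)| \lesssim e^{-(p-1)|x|}$ for every $j \in \bN$.

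The core idea is to extract an $\varepsilon$-gain from the commutator by representing $\<\im\varepsilon\partial_x\>^{-(1+N)}$ as a convolution and Taylor-expanding the potential. Let $K \defeq \cF^{-1}\{\<\cdot\>^{-(1+N)}\}$, which is Schwartz and in fact exponentially decreasing because the symbol $\<\xi\>^{-(1+N)}$ is analytic in a strip around $\R$. Then $\<\im\varepsilon\partial_x\>^{-(1+N)} f = K_\varepsilon * f$ with $K_\varepsilon(x) = \varepsilon^{-1} K(x/\varepsilon)$, and the substitution $y = x - \varepsilon z$ together with $V_{N+1}(y) - V_{N+1}(x) = -\varepsilon z \int_0^1 V_{N+1}'(x - s\varepsilon z)\,\dd s$ yields the pointwise representation
\begin{equation*}
\comm{\<\im\varepsilon\partial_x\>^{-(1+N)}}{V_{N+1}} f(x) = -\varepsilon \int_{\R} z\, K(z) \int_0^1 V_{N+1}'(x - s\varepsilon z)\,\dd s\; f(x - \varepsilon z)\, \dd z,
\end{equation*}
exhibiting the desired $\varepsilon$-gain explicitly.

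Setting $f = \cA u$, taking $L^2_x$-norms and applying Minkowski's inequality in $z$, the pointwise bound $|V_{N+1}'(x - s\varepsilon z)| \lesssim e^{(p-1)\varepsilon|z|} \sech(\kappa x)$ (valid for any $\kappa < p-1$), together with the $L^1_z$-integrability of $|zK(z)|\, e^{(p-1)\varepsilon|z|}$ for $\varepsilon$ small, reduces the estimate to $\varepsilon \norm{\sech(\kappa \cdot)\, \cA u}_{L^2}$. To close \eqref{eq:KM3}, I then convert this into $\varepsilon \norm{\cT u}_{L^2_{-\kappa}}$ via the identification $\cA u = \<\im\varepsilon\partial_x\>^{1+N}\cT u$ (modulo lower-order corrections between $\cA$ and $\cA^\ast$, whose coefficients decay exponentially on the support of the weight), exploiting the $L^2$-boundedness of the $\varepsilon$-scaled Fourier multipliers and the fact that the translations $x \mapsto x - \varepsilon z$ distort the $\sech(\kappa\cdot)$ weight by at most $e^{\kappa\varepsilon|z|}$, which is absorbed by $K(z)$.

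For the weighted version \eqref{eq:KM3b}, I would absorb the outer weight $\cosh(\kappa x /2)$ into $V_{N+1}'$ via the pointwise inequality $\cosh(\kappa x/2)\, e^{-(p-1)|x|} \lesssim \sech(\kappa x /2)$, valid because $\kappa \ll p-1$, and re-run the argument above with $\sech(\kappa\cdot/2)$ in place of $\sech(\kappa\cdot)$, yielding $\norm{\cT u}_{L^2_{-\kappa/2}}$ on the right. The main obstacle throughout is the weight bookkeeping: the exponential decay of $V_{N+1}'$ must simultaneously dominate the growth of $\cosh(\kappa x /2)$, the translation distortions $x \mapsto x - \varepsilon z$ introduced by the convolution, and the lower-order losses from interchanging $\cA$ with $\cA^\ast$, while all residual exponential $z$-growth must be absorbed by the Schwartz decay of $K(z)$. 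This is precisely why the hypothesis $0 < \kappa \ll k_2$ is imposed in the companion lemmas; once $\kappa$ is chosen small enough, the balancings go through and both \eqref{eq:KM3} and \eqref{eq:KM3b} follow.
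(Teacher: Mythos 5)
The paper does not prove \Cref{lem:KM2} here; it cites \cite[Lemma~7.6]{CMS2023}, so there is nothing to compare against directly. Taking your proposal on its own merits, the setup is promising but the key step does not close. Your Taylor-expansion representation of the commutator is fine, and the $\varepsilon$-extraction from $V_{N+1}(y)-V_{N+1}(x)$ is the right instinct. The gap is the intermediate reduction to $\varepsilon\,\|\sech(\kappa\cdot)\,\cA u\|_{L^2}$: since $\cA$ (or $\cA^\ast$, which is what is actually used in \eqref{eq:vBg}) is a differential operator of order $N+1$, for generic $u\in H^1$ one has $\cA u\in H^{-N}$ only, and $\|\sech(\kappa\cdot)\,\cA u\|_{L^2}$ is infinite. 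Applying Minkowski in $z$ and bounding $|(\cA u)(x-\varepsilon z)|$ pointwise discards precisely the smoothing carried by the kernel $zK(z)$ (whose Fourier transform decays like $\langle\xi\rangle^{-(N+2)}$), which is what must absorb the $N+1$ derivatives sitting in $\cA u$. The subsequent ``conversion'' $\|\sech(\kappa\cdot)\,\cA u\|\lesssim\|\cT u\|_{L^2_{-\kappa}}$ invokes ``$L^2$-boundedness of the $\varepsilon$-scaled Fourier multiplier $\langle\im\varepsilon\partial_x\rangle^{1+N}$,'' but this operator is unbounded on $L^2$ for every fixed $\varepsilon>0$; it is precisely the inverse of the bounded one. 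So the argument proves nothing stronger than $\infty\le\infty$.

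What is actually needed is to keep the smoothing and the $\cA$ derivatives together, e.g.\ via the algebraic identity
\[
\comm{\langle\im\varepsilon\partial_x\rangle^{-(N+1)}}{V_{N+1}}\,\langle\im\varepsilon\partial_x\rangle^{\,N+1}
=\langle\im\varepsilon\partial_x\rangle^{-(N+1)}\comm{\langle\im\varepsilon\partial_x\rangle^{\,N+1}}{V_{N+1}},
\]
and then, writing $\cA^\ast u=\langle\im\varepsilon\partial_x\rangle^{\,N+1}\cT u$, exploit that in the nontrivial case $N=3$ the operator $\langle\im\varepsilon\partial_x\rangle^{4}=(1-\varepsilon^2\partial_x^2)^2$ is a polynomial in $\varepsilon\partial_x$, so $\comm{\langle\im\varepsilon\partial_x\rangle^{4}}{V_4}$ is a third-order differential operator whose coefficients all carry at least $\varepsilon^2$ and involve derivatives of $V_4$ (hence decay like $e^{-(p-1)|x|}$). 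Pre-composing with $\langle\im\varepsilon\partial_x\rangle^{-4}$ then produces a genuinely $L^2$-bounded operator of size $O(\varepsilon)$, and one commutes it past the $\sech$ weights using the exponential off-diagonal decay of the kernel (for \eqref{eq:KM3b} one needs $\cosh(\kappa x/2)\,e^{-(p-1)|x|}\lesssim\sech(\kappa x/2)$, which is the place your observation about $\kappa\ll k_2$ enters). Without this bookkeeping of how many $\varepsilon$'s accompany each $\partial_x$, the $\varepsilon$ you extracted from the Taylor expansion is illusory, since differentiating the scaled kernel $K_\varepsilon$ costs $\varepsilon^{-1}$ per derivative.
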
 \qed

\section{Proof of Proposition \ref{prop:2ndvirial}: virial estimates for the transformed equation} \label{proof:2dnv}

Using the $\mathcal{T}$  in  \eqref{def:Tg} with $ N=3 $ for $ p\in\pare{\frac{5}{3}, 2} $ and $ N=2 $ for $ p=2 $, we consider the new   variable
\begin{align}   \label{def:vBg}
{\bm v}\defeq\mathcal{T}\boldsymbol{\eta}.
\end{align}
Here   ${\bm v}$ is even for $5/3<p<2$ but, crucially,  is odd for $p=2$.
For ${\bf L}_{ N+1}$, cfr. \cref{eq:linearNLKG_2},    ${\bm v}$ satisfies
\begin{multline}
\label{eq:vBg}
\dot{{\bm v}}=
-\mathcal{T}D\boldsymbol{\Phi}\pare{\bm z}(\dot{{\bm z}}-\tilde{{\bm z}})
+\mathbf{J}\left({\bf L}_{ N+1}{\bm v}
+\begin{pmatrix}
\comm{\< \ii\varepsilon \partial_x\>^{-\pare{N+1}}}{V_{N+1}} & 0 \\ 0 & 0
\end{pmatrix}
\mathcal{A}^*\boldsymbol{\eta}\right)\\
+        \mathcal{T} \left  [\left(  f'(\boldsymbol{\Phi}\pare{\bm z}_1) - f'(Q) \right) \eta _1 \mathbf{i}  + \left( f\left( \boldsymbol{\Phi}\pare{\bm z}_1 +\eta _1 \right)-f(\boldsymbol{\Phi}\pare{\bm z}_1)- f'(\boldsymbol{\Phi}\pare{\bm z}_1)\eta _1  \right) \mathbf{i}
 -  {\bm R}\pare{\bm z}\right ]   .
\end{multline}
From Lemma \ref{lem:coer6A}, we have
\begin{align}
\|      \sech ( \kappa x)\boldsymbol{\eta}\|_{L^2}\lesssim \norm{ \sech \pare{  \frac{\kappa}{2} \  x }{\bm v}}_{L^2}.\label{eq:key1}
\end{align}
Set, for the $\varphi_B$ defined in \eqref{def:zetaphi},
\begin{align*}
\psi_{A,B}=\chi_A^2 \varphi_B,
&&
  \tilde{S}_{A,B}=\frac{1}{2}\psi_{A,B}'+\psi_{A,B}\partial_x,
\end{align*}
and consider the functionals
\begin{align*}
\mathcal{I}_{\mathrm{2nd},1}\defeq\frac{1}{2}\< \mathbf{J} {\bm v},\tilde{S}_{A,B}{\bm v}\> ,
&&
 \mathcal{I}_{\mathrm{2nd},2}\defeq\frac{1}{2}\< \mathbf{J}  {\bm v},\sigma_3 e^{-\kappa \<x\>}{\bm v}\> .
\end{align*}

\begin{lemma}\label{lem:2v1}
	We have
\begin{align} &
\|v_1'\|_{L^2_{-\frac{\kappa}{2}}}^2+\|v_1\|_{L^2_{-\frac{\kappa}{2}}}^2+\dot{\mathcal{I}}_{\mathrm{2nd},1}
\lesssim   \left(\varepsilon^{-\pare{N+1}}A^2\delta+A^{-1/2}\right)\|\boldsymbol{\eta}\|  _{\boldsymbol{ \Sigma }_A}^2
+  |{\bm z} |^4.
\label{eq:lem:2v1} \end{align}
\end{lemma}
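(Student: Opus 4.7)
My plan is to mirror the structure of the first virial argument in Section 5, but now carried out in the transformed variable ${\bm v}=\mathcal{T}\boldsymbol{\eta}$ where the linearization becomes $\mathbf{J}\mathbf{L}_{N+1}$ with the (near-)repulsive potential $V_{N+1}$. I would differentiate
\[
\mathcal{I}_{\mathrm{2nd},1}=\tfrac{1}{2}\langle \mathbf{J}{\bm v},\tilde{S}_{A,B}{\bm v}\rangle
\]
in time, substitute $\dot{\bm v}$ from \eqref{eq:vBg}, and split the resulting identity into a main virial quadratic form coming from $\mathbf{J}\mathbf{L}_{N+1}{\bm v}$ plus four error contributions: the modulation term $\mathcal{T}D\boldsymbol{\Phi}[{\bm z}](\dot{\bm z}-\tilde{\bm z})$, the commutator $[\langle \ii\varepsilon\partial_x\rangle^{-(N+1)},V_{N+1}]\mathcal{A}^*\boldsymbol{\eta}$, the two nonlinear corrections in \eqref{eq:vBg}, and the remainder $\mathcal{T}{\bm R}[{\bm z}]$.

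\textbf{Main term and coercivity.} Since $\tilde{S}_{A,B}$ is skew-adjoint and the second component of $\mathbf{L}_{N+1}$ is the identity, a direct computation using $[-\partial_x^2,\tilde{S}_{A,B}]=-2\partial_x(\psi'_{A,B}\partial_x)-\tfrac{1}{2}\psi'''_{A,B}$ and $[V_{N+1},\tilde{S}_{A,B}]=-\psi_{A,B}V'_{N+1}$ yields
\[
\dot{\mathcal{I}}_{\mathrm{2nd},1}=-\|\sqrt{\psi'_{A,B}}\,v_1'\|_{L^2}^2+\tfrac{1}{4}\langle \psi'''_{A,B}v_1,v_1\rangle-\tfrac{1}{2}\langle \psi_{A,B}V'_{N+1}v_1,v_1\rangle+\text{errors}.
\]
For $5/3<p<2$, the repulsivity \eqref{eq:repuls} combined with the sign of $\psi_{A,B}$ (odd, positive for $x>0$) turns $-\tfrac{1}{2}\langle \psi_{A,B}V'_{N+1}v_1,v_1\rangle$ into a positive contribution controlling $\|v_1\|^2_{L^2_{-\kappa/2}}$ for the optimal choice $\kappa\approx 2(p-1)$; for $p=2$ we have $V_{N+1}=0$ but ${\bm v}$ is odd, and a Hardy inequality provides the missing coercivity. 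In either case, combining with a Poincaré-type inequality analogous to \eqref{eq:KM19} and the fact that $\psi'_{A,B}\sim \chi_A^2\zeta_B^2$ dominates $\sech(\kappa x/2)$ on $[-A,A]$ for the chosen parameters, one arrives at $\|v_1'\|^2_{L^2_{-\kappa/2}}+\|v_1\|^2_{L^2_{-\kappa/2}}\lesssim -\dot{\mathcal{I}}_{\mathrm{2nd},1}+\text{errors}$.

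\textbf{Error terms.} The modulation contribution is bounded by $|\dot{\bm z}-\tilde{\bm z}|\cdot \|\tilde{S}_{A,B}{\bm v}\|_{L^2_{\text{weighted}}}$; invoking Lemma \ref{lem:modbound} to obtain $|\dot{\bm z}-\tilde{\bm z}|\lesssim \delta^{p-1}\|\sech(\kappa x)\boldsymbol{\eta}\|_{L^2}$, then passing to $\boldsymbol{\eta}$ via \eqref{eq:key1} and converting back via Lemma \ref{lem:KM1} produces the factor $\varepsilon^{-(N+1)}A^2\delta$ once one uses $\|\sech(\kappa x)\boldsymbol{\eta}\|_{L^2}\lesssim A\|\boldsymbol{\eta}\|_{\bm{\Sigma}_A}$. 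The commutator term is handled directly by \eqref{eq:KM3}, giving $O(\varepsilon)$ which is absorbed. The nonlinear corrections are treated by dualizing $\mathcal{T}$ and applying the pointwise estimates \eqref{eq:pointwise_bound}, \eqref{eq:buuuuh1}--\eqref{bonina} from Lemma \ref{lem:estF}, producing bounds of the form $\delta^{p-1}\|\boldsymbol{\eta}\|_{\bm{\Sigma}_A}^2$, which contributes to the $A^{-1/2}\|\boldsymbol{\eta}\|_{\bm{\Sigma}_A}^2$ term after using $\delta^{p-1}\ll A^{-1/2}$. Finally, $\mathcal{T}{\bm R}[{\bm z}]$ is estimated via \eqref{eq:Ijest2} and the item \eqref{item:rem_better_1} of Lemma \ref{lem:rpcorr}, yielding $|{\bm z}|^4+\|\boldsymbol{\eta}\|^2_{\boldsymbol{L}^2_{-\kappa}}$ which is absorbed.

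\textbf{Main obstacle.} The genuinely delicate point is the action of $\mathcal{T}=\langle \ii\varepsilon\partial_x\rangle^{-(N+1)}\mathcal{A}^*$ on the low-regularity nonlinear remainder. Since $\mathcal{A}^*$ involves up to $N+1=4$ spatial derivatives, naive estimates would require differentiating $f(u)=|u|^{p-1}u$ more times than is meaningful for $p<2$. The resolution, following the spirit of \cite{CMS2023}, is to bypass direct differentiation by duality: I would move the adjoint of $\mathcal{T}$ onto a test function obtained from $\tilde{S}_{A,B}{\bm v}$, so that only pointwise bounds on $f$ and its first derivative (already established in Lemmas \ref{lem:rpcorr} and \ref{lem:estF}) enter, with the $\varepsilon^{-(N+1)}$ loss being the price paid for this smoothing maneuver. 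The smallness hierarchy \eqref{eq:relABg} is then used to guarantee that $\varepsilon^{-(N+1)}A^2\delta\ll 1$, closing the estimate.
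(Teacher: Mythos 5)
Your overall strategy — differentiate $\mathcal{I}_{\mathrm{2nd},1}$, substitute \eqref{eq:vBg}, split into the main virial form plus modulation, commutator, nonlinear, and remainder errors — is exactly the paper's $D_1,\dots,D_5$ decomposition, and your diagnosis of the coercivity mechanism (repulsivity of $V_{N+1}$ for $5/3<p<2$, odd parity of $v_1$ when $p=2$) is correct. One cosmetic difference: rather than expanding $[\mathbf{L}_{N+1},\tilde{S}_{A,B}]$ directly as you propose, the paper follows Kowalczyk–Martel and substitutes $\xi_1=\chi_A\zeta_B v_1$, rewriting $D_2$ as $-\int(\xi_1'^2+V_B\xi_1^2)+D_{21}$, where $V_B$ inherits the positivity from $-\tfrac{\varphi_B}{\zeta_B^2}V'_{N+1}$. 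These are equivalent, but the $\xi_1$ form is what makes the error terms $D_{21}$ directly match \cite[Lemma 4]{KM22}.

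The genuine gap in your proposal is the remainder term $\langle\mathcal{T}{\bm R}[{\bm z}],\tilde{S}_{A,B}{\bm v}\rangle$ (the paper's $D_5$, and in particular the piece $D_{52}$ coming from the Taylor remainder $I=f(\boldsymbol{\Phi}[{\bm z}]_1)-f(Q)-f'(Q)\widetilde{\boldsymbol{\Phi}}[{\bm z}]_1$). You say this is ``estimated via \eqref{eq:Ijest2} and item \eqref{item:rem_better_1},'' but the pointwise bound $|I|\lesssim|{\bm z}|^2e^{-|x|}$ alone does not let you control $\mathcal{T}I$: since $\mathcal{A}^*$ carries $N+1=4$ derivatives, a naive attack would need four derivatives of $f$, which do not exist for $p<2$. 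You correctly flag this as the main obstacle but then propose to fix it by moving $\mathcal{T}^*$ onto $\tilde{S}_{A,B}{\bm v}$; this is \emph{not} what the paper (nor \cite{CMS2023}) does, and it is not backed by the technical lemmas in the text — Lemma \ref{lem:KM1} gives weighted $L^2$ bounds for $\mathcal{T}$, and Lemma \ref{lem:coer6A} for the ``inverse'' $\prod R_{L_0}(\mu_j)P_c\mathcal{A}^*\langle\ii\varepsilon\partial_x\rangle^{N+1}$, but there is no weighted bound for $\mathcal{T}^*=\mathcal{A}\langle\ii\varepsilon\partial_x\rangle^{-(N+1)}$ that you could invoke. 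Even granting such a bound, the weight transfer and the growth of $\psi_{A,B}$ on $[-2A,2A]$ have to be tracked, none of which appears in your sketch.

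What the paper actually does for $D_{52}$ is a three-layer argument that your proposal does not reproduce: (i) split the kernel into a far-diagonal part handled by the Schwartz-kernel decay \eqref{eq:scker}, which costs nothing in derivatives; (ii) for the near-diagonal part, use the Fourier identity $\chi(x-y)=\int\hat{\chi}(\xi)e^{\ii\xi(x-y)}\,d\xi$ together with Paley–Littlewood in $y$ to commute modulations through $\mathcal{T}$; and (iii), crucially, Taylor-expand $I$ one order further on $|y|\le 2A$, writing $I=\tfrac12 f''(Q)\widetilde{\boldsymbol{\Phi}}[{\bm z}]_1^2+E$ with $|E|\lesssim|{\bm z}|^3e^{-\frac{3-p}{2}|y|}$ — this is valid there because $\log(\delta^{-1})\gg A$ forces $|\widetilde{\boldsymbol{\Phi}}[{\bm z}]_1|\ll Q$ on that region. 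The smooth quadratic part $f''(Q)\widetilde{\boldsymbol{\Phi}}^2$ can absorb the $N+1$ derivatives in $\mathcal{A}^*$ (it is a linear combination of smooth exponentially decaying profiles with coefficients $z_iz_j$), while the non-smooth cubic error $E$ is controlled purely in $L^2$ with an $\varepsilon^{-(N+1)}$ loss, which the hierarchy \eqref{eq:relABg} renders harmless. Without step (iii) your argument does not close, because neither the pointwise bound on $I$ nor the quadratic bound on $\tilde{\bm z}_R$ gives you any handle on $\mathcal{A}^*I$. Your estimates for $D_1,D_3,D_4$ are in line with the paper (Lemma \ref{lem:modbound}, Lemma \ref{lem:KM2}, and the weighted $L^2$ bounds of Lemma \ref{lem:KM1} applied directly to the nonlinear term — note the paper does \emph{not} dualize there either), so the missing ingredient is concentrated in this one, genuinely delicate, piece.
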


\begin{proof}
By \cref{eq:vBg}, we have
\begin{align*}
\dot{\mathcal{I}}_{\mathrm{2nd},1}= & \  -\< \mathbf{J} \mathcal{T}D\boldsymbol{\Phi}({\bm z})(\dot{{\bm z}}-\tilde{{\bm z}}),\tilde{S}_{A,B}{\bm v}\> -
\<{\bf L}_{ N+1}{\bm v},\tilde{S}_{A,B}{\bm v}\> \\& +\<\begin{pmatrix}
\comm{\< \ii\varepsilon \partial_x\>^{-\pare{N+1}}}{V_4} & 0 \\ 0 & 0
\end{pmatrix}\mathcal{A}^*\boldsymbol{\eta},\tilde{S}_{A,B}{\bm v}\>
  \\&+\<\mathcal{T} \left( \left(  f'(\boldsymbol{\Phi}\pare{\bm z}_1) - f'(Q) \right) \eta _1   + \left( f\left( \boldsymbol{\Phi}\pare{\bm z}_1 +\eta _1 \right)-f(\boldsymbol{\Phi}\pare{\bm z}_1)- f'(\boldsymbol{\Phi}\pare{\bm z}_1)\eta _1  \right)   \right)
 ,\tilde{S}_{A,B} {v}_1\>  \\& -\<\mathcal{T}  {\bm R}\pare{\bm z} ,\tilde{S}_{A,B}{\bm v}\>
=:   \ D_1+D_2+D_3+D_4+D_5.
\end{align*}
 Following \cite[Section 5]{KM22},  for the main term $D_2$  we have
\begin{align}\label{def:xi}
D_2=-\<L_{N+1}v_1,\widetilde{S}_{A,B}v_1\>= -\int \left(  \xi _1 ^{\prime  2}+V_B \xi _1 \right)\,dx+D_{21}  , && \xi _1\defeq \chi_A\zeta_B v_1,
\end{align}
  where
\begin{align}\label{eq:defvb}&
 V_B = \frac{1}{2} B^{-1}\left(\chi'' |x|+2\chi' \frac{x}{|x|}\right)  -\frac{1}{2}\  \frac{\varphi _B}{\zeta _B^2}V'_{N+1} \text{ and}\\&
D_{21}=\frac{1}{4}\int (\chi_A^2)'(\zeta_B^2)'v_1^2+\frac{1}{2}\int \left(3(\chi_A')^2+\chi_A''\chi_A\right)\zeta_B^2v_1^2-\int (\chi_A^2)'\varphi_B(v_1')^2+\frac{1}{4}\int (\chi_A^2)''' \varphi_B v_1^2. \nonumber
\end{align}
We claim that
\begin{align}\label{eq:KMlemma3}
\int ( \xi _1 ^{\prime  2}+V_B \xi _1 )\,dx  +A^{-1}\|\boldsymbol{\eta}\|_{\boldsymbol{ \Sigma }_A}^2\gtrsim \left(\norm{\sech \left( \frac{\kappa}{2} x\right) v_1'}_{L^2}^2+\norm{\sech \left( \frac{\kappa}{2} x\right) v_1}^2\right).
\end{align}
The proof is like in \cite[Lemma 3]{KM22}.  We have, by $0<\zeta  _B\le 1$,
\begin{align*} & \int _{|x|\le A}\sech \left(  \kappa x\right)v_1^2\le  \int _{|x|\le A}\sech \left(  \frac{\kappa}{2} x\right) \zeta ^2_B   v_1^2\le  \int _{|x|\le A}\sech \left(  \frac{\kappa}{2} x\right) \xi_1^2.
\end{align*}
We have
\begin{align*} & \int _{|x|\le A}\sech \left(  \kappa x\right)v_1 ^{\prime 2}\le  \int _{|x|\le A}\sech \left(  \frac{\kappa}{2} x\right) \left( \xi _1' - \zeta '_Bv_1\right) ^2
\lesssim  \int _{|x|\le A}\sech \left(  \frac{\kappa}{2} x\right)   (  \xi _1 ^{\prime 2} + \xi _1 ^{  2}    )    .
\end{align*}
We have, thanks to \eqref{eq:KM1},
\begin{multline*}
 \int _{|x|\ge A}\sech \left(  \kappa x\right) \left(  v_1 ^{\prime 2}+     v_1^2\right) \le  \sech \left(  \frac{\kappa}{2} A\right)\int _{\R }\sech \left( \frac{8}{A} x\right) \left(  v_1 ^{\prime 2}+     v_1^2\right) dx \\
   \lesssim \sech \left(  \frac{\kappa}{2} A\right) \varepsilon ^{-\pare{N+1}}\int _{\R }\sech \left( \frac{4}{A} x\right) \left(  \eta_1 ^{\prime 2}+     \eta_1^2\right) dx\le A ^{-1}\|\boldsymbol{\eta}\|_{\boldsymbol{ \Sigma }_A}^2.
\end{multline*}
Finally,  we claim the following, which  completes the proof of \eqref {eq:KMlemma3},
\begin{align} \label{eq:coercVB} & \int _{\R } \sech \left(  \frac{\kappa}{2} x\right)   (  \xi _1 ^{\prime 2} + \xi _1 ^{  2}    )  \lesssim  \int _{\R }( \xi _1 ^{\prime  2}+V_B \xi _1^{  2}   )\,dx .
\end{align}
In the case $5/3<p<2$, the above inequality  is true for all $\xi _1\in H ^{1}    \left( \R \right)$ and
follows easily from the fact  for $B$ large the potential $V_B$ is a small perturbation of the  positive potential  $- \  \frac{\varphi _B}{\zeta _B^2}V'_{N+1}
  >0$ for $x\neq 0$. In the case $p=2$, then  $V _{N+1}=V_3=0$, and so $V_B$ is a small potential.  Then for $B$ large, the coercivity in \eqref{eq:coercVB} is true
 only for $\xi _1\in H ^{1} _{\odd}  \left( \R \right)$.    Fortunately, from \eqref{def:xi} we see that $\xi _1$ has the same parity of $v_1$ and, as we remarked right under  \eqref{def:vBg},   $v_1$ is odd if  $p=2$.

\noindent We next have the following, which is \cite[Lemma 4]{KM22}  (see also the argument  in  \cite{CMS2023}, here $A^{-1} B\varepsilon^{-N-1}\ll A^{-\frac{1}{2}}$),
\begin{align}\label{eq:KMlemma4}
|D_{21}|\lesssim A^{-1/2}\left(\|\boldsymbol{\eta}\|    _{\boldsymbol{ \Sigma }_A}^2+\|\sech \left( \kappa x\right)\eta_1\|_{L^2}^2\right)   .
\end{align}
  By Lemma \ref{lem:modbound}, $ \av{\tilde{S}_{A,B}^\ast \cT D_\bmz\bmPhi\pare{\bmz}} \sim e^{-\kappa\av{x}} $ and \eqref{eq:KM1} we have that
  \begin{equation}
  \label{eq:D1}
  \begin{aligned}
|D_1|&\lesssim |\dot{{\bm z}}-\tilde{{\bm z}}| \|  \sech \left( 2\kappa x\right)    {\bm v}\|_{L^2}\\
& \lesssim \delta ^{p-1} \|\sech \left(  \kappa x\right) {\eta}_1\|_{L^2} \|\sech \left( 2\kappa x\right) {\bm v}\|_{L^2}   \lesssim \delta ^{p-1} \varepsilon ^{-\pare{N+1}} \|\sech \left(  \kappa x\right) \boldsymbol{\eta} \|_{L^2}^2 .
\end{aligned}
  \end{equation}
Like in \cite{CMS2023},  we have (obviously here  we care only of case $2>p>5/3$ where $V_{N+1}\neq 0$)
\begin{equation}
\label{eq:D3}
\begin{aligned}
|D_3|  & \ =\av{\< \comm{\<\im \varepsilon \partial_x\>^{-\pare{N+1}}}{V_4} \mathcal{A}^*\eta _1, \widetilde{S} _{A,B}v_1\>}  \\&
\le \norm{ \cosh \left( \frac{\kappa}{2} x\right)\comm{\<\im \varepsilon \partial_x\>^{-\pare{N+1}}}{V_4} \mathcal{A}^*\eta _1 } _{L^2} \norm{ \sech \left( \frac{\kappa}{2} x\right) \widetilde{S} _{A,B}v_1} _{L^2}
\\&
\leq   \varepsilon   \norm{ \sech \left( \frac{\kappa}{2} x\right)v_1}_{L^2}  \left(\norm{\sech \left( \frac{\kappa}{2} x\right) v_1'}_{L^2}+\norm{\sech \left( \frac{\kappa}{2} x\right) v_1}_{L^2}\right)\\&
\lesssim\varepsilon \left(\norm{ \sech \left( \frac{\kappa}{2} x\right)v_1'}_{L^2}^2+\norm{\sech \left( \frac{\kappa}{2} x\right) v_1}_{L^2}^2\right) ,
\end{aligned}
\end{equation}
where the upper bound can be absorbed inside the left hand side of \eqref{eq:lem:2v1}.

 \noindent   We now write $D_4= D _{41}+ D _{42}$, with \footnote{Notice that in the estimate of the term $D_4$ in \cite[Lemma 8.1]{CMS2023} the operator $\mathcal{T}$ is omitted by mistake. The proof can be corrected proceeding like here.}
 \begin{align*}&
D _{41} \defeq\<\mathcal{T}   \left(  f'(\boldsymbol{\Phi}\pare{\bm z}_1) - f'(Q) \right) \eta _1
 ,\tilde{S}_{A,B} {v}_1\>    \\& D _{42} \defeq\<\mathcal{T}   \left(  \left( f\left( \boldsymbol{\Phi}\pare{\bm z}_1 +\eta _1 \right)-f(\boldsymbol{\Phi}\pare{\bm z}_1)- f'(\boldsymbol{\Phi}\pare{\bm z}_1)\eta _1  \right)   \right)
 ,\tilde{S}_{A,B} {v}_1\>  .
\end{align*}
 By  $\sech \left( \frac{4}{A}  x \right)\sim 1$ on               $\supp \psi_{A,B}\subseteq [-2A,2A] $,  by \cref{eq:estder1,eq:estder2}
 and by \cref{eq:KM1,eq:KM2},
 \begin{equation}
 \label{eq:D41}
 \begin{aligned}
   |D _{41}|& \le \norm{    \sech \left( \frac{4}{A}  x \right) \mathcal{T} \left(  f'(\boldsymbol{\Phi}\pare{\bm z}_1) - f'(Q) \right) \eta _1 } _{L^2}
    \left(  \norm{\sech \left(  \frac{4}{A} x \right) v_1}_{L^2}
+ \norm{\sech \left(  \frac{4}{A} x \right)v_1'}_{L^2}\right)  \\&  \lesssim \varepsilon ^{-2\pare{N+1}}  \norm{    \left(  f'(\boldsymbol{\Phi}\pare{\bm z}_1) - f'(Q) \right) \eta _1 } _{L^2} \left(  \norm{\sech \left(  \frac{2}{A} x \right) \eta_1}_{L^2}
+ \norm{\sech \left(  \frac{2}{A} x \right)\eta_1'}_{L^2}\right)
\\& \lesssim   \varepsilon ^{-2\pare{N+1}}A |{\bm z}|   \norm{    e^{-\frac{p-1}{2}|x|} \eta _1 }_{L^2} \norm{\boldsymbol{\eta}}_{\boldsymbol{ \Sigma }_A}
 \lesssim \varepsilon ^{-2\pare{N+1}}A ^2 \delta   \norm{\boldsymbol{\eta}}_{\boldsymbol{ \Sigma }_A}^2 .
\end{aligned}
\end{equation}
We have similarly, using \eqref{eq:KM1} to get the second line and  \cref{eq:buuuuh1,eq:buuuuh2} the third,
   \begin{align} \nonumber
   |D _{42}|& \le  \varepsilon ^{- \pare{N+1}}A \norm{   \sech \left( \frac{4}{A}  x \right)  \mathcal{T}   \left( f\left( \boldsymbol{\Phi}\pare{\bm z}_1 +\eta _1 \right)-f(\boldsymbol{\Phi}\pare{\bm z}_1)- f'(\boldsymbol{\Phi}\pare{\bm z}_1)\eta _1  \right)  }_{L^2}
     \norm{ \boldsymbol{\eta}}_{\boldsymbol{ \Sigma }_A}
     \\&  \nonumber \lesssim  \varepsilon ^{- 2\pare{N+1}}A \norm{   \sech \left( \frac{2}{A}  x \right) \left( f\left( \boldsymbol{\Phi}\pare{\bm z}_1 +\eta _1 \right)-f(\boldsymbol{\Phi}\pare{\bm z}_1)- f'(\boldsymbol{\Phi}\pare{\bm z}_1)\eta _1  \right)  }_{L^2} \norm{ \boldsymbol{\eta}}_{\boldsymbol{ \Sigma }_A}
     \\& \nonumber \lesssim   \varepsilon ^{- 2\pare{N+1}}A \norm{   \sech \left( \frac{2}{A}  x \right)  \av{\eta _1}^{p}  }_{L^2} \norm{ \boldsymbol{\eta}}_{\boldsymbol{ \Sigma }_A} \le \varepsilon ^{- 2\pare{N+1}}A^2
     \| \eta _1 \| _{L^\infty}^{p-1}\norm{ \boldsymbol{\eta}}_{\boldsymbol{ \Sigma }_A}^2 \\& \lesssim \varepsilon ^{- 2\pare{N+1}}A^2
    \delta ^{p-1}\norm{ \boldsymbol{\eta}}_{\boldsymbol{ \Sigma }_A}^2 . \label{eq:D42}
     \end{align}
Hence \cref{eq:D41,eq:D42} yield
\begin{equation}
\label{eq:D4}
\av{D_4} \lesssim \pare{ \frac{A}{\varepsilon^4} }^2    \delta^{p-1}
     \|\boldsymbol{\eta}\|    _{\boldsymbol{ \Sigma }_A}^2.
\end{equation}
Finally,  we consider
\begin{align*}
 D_5 =
 \< \mathcal{T} D_{\bm z}\boldsymbol{\Phi}\pare{\bm z}\tilde{{\bm z}}_R ,\tilde{S}_{A,B}{\bm v}\> -\< \mathcal{T} \left(  f(\boldsymbol{\Phi}\pare{\bm z}_1)   - f(Q)-f' (Q)  \tilde{\boldsymbol{\Phi}}[{\bm z}]_1  \right) ,\tilde{S}_{A,B} {v}_1\> =:D_{51}+D_{52}.
\end{align*}
 We have
\begin{align*}
 D_{51} = & \  D_{511}+ D_{512} \\
   = & \  - \< \partial _x \mathcal{T} D_{\bm z}\boldsymbol{\Phi}\pare{\bm z}\tilde{{\bm z}}_R , \psi_{A,B} {\bm v}\>   - \frac{1}{2} \< \mathcal{T}  D_{\bm z}\boldsymbol{\Phi}\pare{\bm z}\tilde{{\bm z}}_R , \psi_{A,B}' {\bm v}\>
    .
\end{align*}
 We have
\begin{align*}
 | D_{511}|     \le     \norm{ \cosh \left(  \kappa x \right) \partial _x \mathcal{T} D_{\bm z}\boldsymbol{\Phi}\pare{\bm z}\tilde{{\bm z}}_R } _{L^2}
    \| \sech \left(  \kappa x \right) {\bm v} \| _{L^2}   .
 \end{align*}
 In \cite[Lemma 5.6]{CM2022}   it is shown  that there exist constants $C_0>0$ and $ \varepsilon _0>0$ such that for   $\varepsilon  \in (0, \varepsilon _0)$   and for $K_{\varepsilon}(x,y) \in \mathcal{D}'(\R \times \R)$   the Schwartz kernel of  $\mathcal{T} $, then  we have
\begin{align}  \label{eq:scker}
| K_{\varepsilon}(x,y)    | \le C_0  e^{-\frac{|x-y|}{3\varepsilon}}  \text{  for all $x,y$ with $|x-y|\ge 1$.}
\end{align}
 Using this fact, it  is elementary to show that $ \av{\partial _x \mathcal{T} D_{\bm z}\boldsymbol{\Phi}\pare{\bm z}} \sim e^{-k_2\av{x}} $. Hence, if $ 0 <  \kappa \ll k_2 $.
 \begin{align*}
 \av{ D_{511} }\lesssim |\tilde{{\bm z}}_R|.
 \end{align*}
The same argument yields the same bounds for $D_{512}$ so
  we conclude that
 \begin{align}\label{eq:D51}
 \av{ D_{51}}      \lesssim    |\tilde{{\bm z}}_R| \
    \|  {\bm v} \| _{L^2_{-\kappa }}  \lesssim \delta _2^{-1}|{\bm z}|^4 + \delta _2 \|  {\bm v} \| _{L^2_{-\frac{\kappa}{2}}}^2.
 \end{align}
 { Turning to $D _{52}$, we have
 \begin{align*}&
 | D_{52}|     \le      \|  \mathbf{C}  \| _{L^2 (|x|\le 2A)}
    \| \sech \left( \kappa x \right)  \left(   2^{-1}\psi_{A,B}' v_1+\psi_{A,B}v_1'    \right)   \| _{L^2}   \text{  where } \\&\mathbf{C}
\defeq
  \cosh \left(  \kappa x \right) \mathcal{T} \left(  f(\boldsymbol{\Phi}\pare{\bm z}_1)   - f(Q)-f' (Q)  \tilde{\boldsymbol{\Phi}}[{\bm z}]_1  \right) .
 \end{align*}
 For a cutoff $\chi$ like in \Cref{notation:chi},  we write  $ \mathbf{C } =\mathbf{C }_{1}+\mathbf{C }_{2}$, with
 \begin{align*}
   & \mathbf{C }_{1}\defeq  \cosh \left( \kappa x \right) \mathcal{T}   \chi (x-\cdot ) \left(  f(\boldsymbol{\Phi}\pare{\bm z}_1)   - f(Q)-f' (Q)  \tilde{\boldsymbol{\Phi}}[{\bm z}]_1  \right) ,\\&  \mathbf{C }_{2}\defeq  \cosh \left( \kappa x \right) \mathcal{T}   \chi _1(x-\cdot ) \left(  f(\boldsymbol{\Phi}\pare{\bm z}_1)   - f(Q)-f' (Q)  \tilde{\boldsymbol{\Phi}}[{\bm z}]_1  \right) \text{ with }\chi _1:=1-\chi .
 \end{align*}
 For the $I$ in \eqref{eq:defI},  by    \eqref{eq:Ijest2} and using  \eqref{eq:scker},  we have
 \begin{align}\label{eq:estbfc2}
   \|  \mathbf{C} _2 \| _{L^2  } \lesssim \| \cosh \left(  \kappa x \right) \int   e^{-\frac{|x-y|}{3\varepsilon}} |I (y)| dy\| _{L^2  } \lesssim  |{\bm z}|^2\| \cosh \left(  \kappa x \right) \int   e^{-\frac{|x-y|}{3\varepsilon}} e^{-|y| } dy\| _{L^2  }  \lesssim  |{\bm z}|^2,
 \end{align}
 We bound
 \begin{align*}&
  \|  \mathbf{C} _2 \| _{L^2 (|x|\le 2A)  } \\&\lesssim  \int _{\R} d\xi   | \widehat{\chi}(\xi) |\sum _{j=0 }^{ \log (10A)}
     2^{ \kappa j} \|    \varphi _j    \mathcal{T}   e ^{\im \xi \sqcup }    \widetilde{\varphi} _j   \left(  f(\boldsymbol{\Phi}\pare{\bm z}_1)   - f(Q)-f' (Q)  \tilde{\boldsymbol{\Phi}}[{\bm z}]_1  \right) \| _{L^2 (|x|\le 2A)}
 \end{align*}
 for appropriate Paley--Littlewood decompositions  (in $x$ and $y$ space) with $ \supp  \varphi _j $ and $ \supp  \widetilde{\varphi} _j $ in $|x|\sim 2 ^{j}$ for $j\ge 1$ and   $|x|\lesssim 1$ for $j=0$.
  Since for $|y|\lesssim 2 ^ j$  with $j\le   \log (10A)$ we have
 \begin{align} \label{eq:estbfc2-}&
   f(\boldsymbol{\Phi}\pare{\bm z}_1)   - f(Q)-f' (Q)  \tilde{\boldsymbol{\Phi}}[{\bm z}]_1  = 2^{-1} f'' (Q)  \left( \tilde{\boldsymbol{\Phi}}[{\bm z}]_1\right) ^2 +  {E} \text{  with}\\& |E|\lesssim \left | f''' (Q)  \left( \tilde{\boldsymbol{\Phi}}[{\bm z}]_1\right) ^3 \right | \lesssim |{\bm z}|^3 e^{-\frac{3-p}{2} |x|  } ,\nonumber
 \end{align}
 we accordingly split  $\mathbf{C} _2= \mathbf{C} _{21}+\mathbf{C} _{22} $.  With the function   $ \( e ^{\im \xi \sqcup }\) (x):=  e ^{\im \xi x }$, we have
 \begin{align*}&
  \|  \mathbf{C} _{21} \| _{L^2 (|x|\le 2A)  } \lesssim    \int _{\R} d\xi   | \widehat{\chi}(\xi) |\sum _{j=0 }^{ \log (10A)}
     2^{ \kappa j} \|   \< \im \varepsilon \partial _x \> ^{-\pare{N+1}}    \mathcal{A }^*    e ^{\im \xi \sqcup }    \widetilde{\varphi} _j   f'' (Q)  \left( \tilde{\boldsymbol{\Phi}}[{\bm z}]_1\right) ^2 \| _{L^2 (|x|\le 2A)}\\& \lesssim  \|  \< \xi \>  ^{N+1}  \widehat{\chi}  \|_{L^1\left(\R _\xi \right)}
     \sum _{j=0 }^{ \log (10A)} 2^{ \kappa j} \|  \widetilde{\varphi} _j   f'' (Q)  \left( \tilde{\boldsymbol{\Phi}}[{\bm z}]_1\right) ^2 \| _{H^{N+1}(\R )}  \lesssim |{\bm z}|^2.
 \end{align*}
 We have
 \begin{align*}&
  \|  \mathbf{C} _{22} \| _{L^2 (|x|\le 2A)  } \lesssim   \int _{\R} d\xi   | \widehat{\chi}(\xi) |\sum _{j=0 }^{ \log (10A)}
     2^{\kappa  j} \|   \< \im \varepsilon \partial _x \> ^{-\pare{N+1}}    \mathcal{A }^*    e ^{\im \xi \sqcup }    \widetilde{\varphi} _j  E  \| _{L^2 (|x|\le 2A)}\\& \lesssim  \|     \widehat{\chi}  \|_{L^1 \left(\R _\xi \right)} \varepsilon^{-\pare{N+1}}
     \sum _{j=0 }^{ \log (10A)} 2^{  \kappa  j} \| E  \| _{L^2(\R )}  \lesssim  A \varepsilon^{-\pare{N+1}}  |{\bm z}| ^3 \lesssim  A \varepsilon^{-\pare{N+1}} \delta  |{\bm z}| ^2.
 \end{align*}
 Summing up, we have
\begin{align}\label{eq:D52}
   \av{ D_{52}}  \lesssim \delta _2^{-1}|{\bm z}|^4 + \delta _2 \|  {\bm v} \| _{L^2_{-\frac{\kappa}{2}}}^2 +   \delta _2 \norm{ \sech \left(  \frac{\kappa}{2} x\right)   {v}'_1 } _{L^2  } ^2
\end{align}}
\Cref{eq:D51,eq:D52} give that
\begin{equation}\label{eq:D5}
\av{ D_{5}}  \lesssim \frac{\av{\bm z}^4}{\delta _2} + \delta _2 \|  {\bm v} \| _{L^2_{-\frac{\kappa}{2}}}^2 +   \delta _2 \norm{ \sech \left(  \frac{\kappa}{2} x\right)   {v}'_1 } _{L^2  } ^2
\end{equation}
Collecting   \cref{eq:D1,eq:D3,eq:D4,eq:D5,eq:KMlemma4,eq:KMlemma3,def:xi} and bootstrapping, we deduce   \eqref{eq:lem:2v1}.
\end{proof}

\begin{lemma}\label{lem:2v2}
	We have
	\begin{align}\label{eq:lem:2v2}
		\|e^{-\kappa\<x\>/2}v_2\|_{L^2} + \dot{\mathcal{I}}_{\mathrm{2nd},2}\lesssim& \|e^{-\kappa \<x\>/2}v_1'\|_{L^2}^2+
\|e^{-\kappa \<x\>/2}v_1\|_{L^2}^2+|{\bm z}|^4+
   o_{\varepsilon}(1)
   \| \boldsymbol{\eta} \|_{ \boldsymbol{ \Sigma }_A}^2      .
	\end{align}
\end{lemma}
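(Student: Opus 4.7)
The plan is to mirror the structure of \Cref{lem:2v1}, differentiating $\mathcal{I}_{\mathrm{2nd},2}$ along the flow \eqref{eq:vBg} for $\bm v$ and extracting the main positive term $\|e^{-\kappa\<x\>/2}v_2\|_{L^2}^2$ from the diagonal coupling with $\sigma_3\mathbf{J}\mathbf{L}_{N+1}$. Concretely, using that $\sigma_3 e^{-\kappa\<x\>}$ is symmetric and $\mathbf{J}$ skew, we get $\dot{\mathcal{I}}_{\mathrm{2nd},2}=\<\mathbf{J}\dot{\bm v},\sigma_3 e^{-\kappa\<x\>}\bm v\>$. Substituting \eqref{eq:vBg} produces five summands, call them $E_1,\dots,E_5$, in direct analogy with $D_1,\dots,D_5$ in \Cref{lem:2v1}: the modulation term, the linear $\mathbf{J}\mathbf{L}_{N+1}\bm v$ term, the commutator term with $V_{N+1}$, the nonlinear correction built from $f$, and the remainder $\mathcal{T}\bm R[\bm z]$.

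The main term $E_2$ uses $\mathbf{J}(\mathbf{J}\mathbf{L}_{N+1})=-\mathbf{L}_{N+1}$ and $\sigma_3=\mathrm{diag}(1,-1)$ to give $-\<L_{N+1}v_1,e^{-\kappa\<x\>}v_1\>+\|e^{-\kappa\<x\>/2}v_2\|_{L^2}^2$. One integration by parts on the $-\partial_x^2$ piece of $L_{N+1}$ plus boundedness of $V_{N+1}$ then leaves the positive quantity $\|e^{-\kappa\<x\>/2}v_2\|_{L^2}^2$ plus terms bounded by $\|e^{-\kappa\<x\>/2}v_1'\|_{L^2}^2+\|e^{-\kappa\<x\>/2}v_1\|_{L^2}^2$, which are the first two terms on the right-hand side of \eqref{eq:lem:2v2}. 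For $E_1$, I would invoke \Cref{lem:modbound} together with \eqref{eq:KM1} and \eqref{eq:key1} to bound $|E_1|\lesssim |\dot{\bm z}-\tilde{\bm z}|\,\|e^{-\kappa\<x\>/2}\bm v\|_{L^2}\lesssim\delta^{p-1}\varepsilon^{-(N+1)}\|\sech(\kappa x)\bm\eta\|_{L^2}^2$, which is $o_\varepsilon(1)\|\bm\eta\|_{\bm\Sigma_A}^2$ after Young's inequality and \eqref{eq:relABg}. For the commutator term $E_3$ I would use \eqref{eq:KM3b} to gain the crucial $\varepsilon$ factor: $|E_3|\lesssim\varepsilon(\|e^{-\kappa\<x\>/2}v_1\|_{L^2}^2+\|e^{-\kappa\<x\>/2}v_1'\|_{L^2}^2)$, which can be absorbed in the left-hand side of the already-proved \eqref{eq:lem:2v1} type estimate, i.e.\ swallowed as $o_\varepsilon(1)\|\bm\eta\|_{\bm\Sigma_A}^2$.

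The nonlinear term $E_4$ splits as in \Cref{lem:2v1} into $E_{41}$ (involving $f'(\bm\Phi[{\bm z}]_1)-f'(Q)$) and $E_{42}$ (the purely quadratic remainder in $\eta_1$). Using the pointwise bounds \eqref{eq:estder1}–\eqref{eq:estder2} for $E_{41}$ and \eqref{eq:buuuuh1}–\eqref{eq:buuuuh2} for $E_{42}$, combined with \eqref{eq:KM1} on $\mathcal{T}$ applied to the compactly (effectively) supported integrand and paired against $\sech(\kappa\<x\>)\bm v$, yields a bound $\varepsilon^{-2(N+1)}\delta^{p-1}\|\bm\eta\|_{\bm\Sigma_A}^2$ which is $o_\varepsilon(1)\|\bm\eta\|_{\bm\Sigma_A}^2$ after \eqref{eq:relABg}. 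For $E_5=-\<\mathcal{T}\bm R[{\bm z}],\sigma_3 e^{-\kappa\<x\>}\bm v\>$, I would split as in \eqref{eq:errp2} into the $D_{\bm z}\bm\Phi[{\bm z}]\tilde{\bm z}_R$ piece, bounded via \Cref{item:rem_better_20} of \Cref{lem:rpcorr} by $|{\bm z}|^2\|e^{-\kappa\<x\>/2}\bm v\|_{L^2}$, and the $I$ piece bounded pointwise by $|{\bm z}|^2 e^{-|x|}$ via \eqref{eq:Ijest2}; in both cases one uses the exponential decay of the Schwartz kernel \eqref{eq:scker} of $\mathcal{T}$ to control $\cosh(\kappa x)\mathcal{T}(\cdot)$ by $|{\bm z}|^2$.

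The main obstacle is the delicate interplay between the $\varepsilon^{-(N+1)}$ blow-ups coming from $\mathcal{T}$ and the smallness factors $\delta^{p-1}$ and $\varepsilon$: for $p$ close to the threshold $5/3$, $\delta^{p-1}$ is not much better than $\delta^{2/3}$, so I must carefully use the hierarchy \eqref{eq:relABg} to ensure $\varepsilon^{-c}\delta^{p-1}\ll1$, and use the $\varepsilon$ gained from the commutator \eqref{eq:KM3b} to trade for the bad powers of $\varepsilon^{-1}$ elsewhere. A final Young inequality splits each cross term into a small multiple of $\|e^{-\kappa\<x\>/2}v_1'\|_{L^2}^2+\|e^{-\kappa\<x\>/2}v_1\|_{L^2}^2$ (already on the right-hand side) and a large multiple of $|{\bm z}|^4$ or $o_\varepsilon(1)\|\bm\eta\|_{\bm\Sigma_A}^2$, closing \eqref{eq:lem:2v2}.
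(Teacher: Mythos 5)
Your overall plan mirrors the paper's proof exactly: differentiate $\mathcal{I}_{\mathrm{2nd},2}$, substitute \eqref{eq:vBg}, split into five terms $E_1,\dots,E_5$, extract the coercive $\|e^{-\kappa\<x\>/2}v_2\|^2$ from the linear term, control $E_1$ via \Cref{lem:modbound}, $E_3$ via \eqref{eq:KM3b}, $E_4$ via the pointwise bounds \eqref{eq:estder1}--\eqref{eq:estder2}, \eqref{eq:buuuuh1}--\eqref{eq:buuuuh2} and \eqref{eq:KM1}--\eqref{eq:KM2}, and split $E_5$ according to \eqref{eq:errp2}. Up to bookkeeping of signs (which the paper itself is inconsistent about) this is the right skeleton.

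The one place your plan as written would not close is the $I$ piece of $E_5$, i.e.\ the term $E_{52}=\< \mathcal{T}(f(\boldsymbol{\Phi}[{\bm z}]_1)-f(Q)-f'(Q)\widetilde{\boldsymbol{\Phi}}[{\bm z}]_1),e^{-\kappa\<x\>}v_1\>$. You propose controlling $\cosh(\kappa x)\mathcal{T}(\cdot)$ for this piece by \eqref{eq:Ijest2} together with the Schwartz-kernel bound \eqref{eq:scker}, but \eqref{eq:scker} is only valid for $|x-y|\ge 1$: it says nothing about the near-diagonal part of $\mathcal{T}=\<\im\varepsilon\partial_x\>^{-(N+1)}\mathcal{A}^*$, which is precisely where the $\varepsilon^{-(N+1)}$ loss lives. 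If you fall back on \eqref{eq:KM1} there, you lose $\varepsilon^{-(N+1)}$ and the resulting $\varepsilon^{-2(N+1)}|{\bm z}|^4$ is not admissible on the right side of \eqref{eq:lem:2v2}. The paper handles this by splitting the argument of $\mathcal{T}$ with cutoffs $\chi(x-\cdot)$ and $1-\chi(x-\cdot)$ (the far part is the one covered by \eqref{eq:scker}), and for the near part uses a spatial Paley--Littlewood decomposition together with a further Taylor expansion \eqref{eq:estbfc2-} of $I$ to extract a smooth $f''(Q)\widetilde{\boldsymbol{\Phi}}[{\bm z}]_1^2$ main term (which lies in $H^{N+1}$ with bound $|{\bm z}|^2$, killing the $\varepsilon$-loss) plus a cubic error $E$ of size $|{\bm z}|^3$ on which the $\varepsilon^{-(N+1)}$ loss is affordable. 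This is exactly where the low regularity of $f$ for $p\le 2$ forces extra care; simply citing \eqref{eq:scker} for the whole kernel is not enough.
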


\begin{proof}
Differentiating $\mathcal{I}_{\mathrm{2nd},2}$, we have
\begin{align*}&
\dot{\mathcal{I}}_{\mathrm{2nd},2}=    -\< \mathcal{T}D\boldsymbol{\Phi}({\bm z})(\dot{{\bm z}}-\tilde{{\bm z}}),\sigma_3 e^{-\kappa \<x\>}{\bm v}\> \\& -
\<{\bf L}_{ N+1}{\bm v},\tilde{S}_{A,B}{\bm v}\> +\<\begin{pmatrix}
\comm{\< \ii\varepsilon \partial_x\>^{-\pare{N+1}}}{V_4} & 0 \\ 0 & 0
\end{pmatrix}\mathcal{A}^*\boldsymbol{\eta},\sigma_3 e^{-\kappa \<x\>}{\bm v}\>
  \\&+\<\mathcal{T} \left( \left(  f'(\boldsymbol{\Phi}\pare{\bm z}_1) - f'(Q) \right) \eta _1   + \left( f\left( \boldsymbol{\Phi}\pare{\bm z}_1 +\eta _1 \right)-f(\boldsymbol{\Phi}\pare{\bm z}_1)- f'(\boldsymbol{\Phi}\pare{\bm z}_1)\eta _1  \right)   \right)
 ,  e^{-\kappa \<x\>}  {v}_1\>  \\& -\<\mathcal{T}  {\bm R}\pare{\bm z} ,\sigma_3 e^{-\kappa \<x\>}{\bm v}\>
=:E_1+E_2+E_3+E_4+E_5.
\end{align*}
Like in \cite{CMS2023} we have
\begin{align*}&
	E_2=-\|e^{-\kappa\<x\>/2}v_2\|_{L^2}^2+\<L_{N+1}v_1, e^{-\kappa\<x\>}v_1\>=-\|e^{-\kappa\<x\>/2}v_2\|_{L^2}^2+E_{21}, \text{  with}\\&
	|E_{21}|\lesssim  \|e^{-\kappa \<x\>/2}v_1'\|_{L^2}^2+\|e^{-\kappa \<x\>/2}v_1\|_{L^2}^2.
\end{align*}
By Lemma \ref{lem:modbound}, we have
\begin{align*}
	|E_1|\lesssim \delta ^{p-1}\|e^{-\kappa\<x\>/2}{\bm v}\|_{L^2}  \|e^{-\kappa\<x\>}\boldsymbol{\eta}\|_{L^2}
 \lesssim \delta^{p-1} \varepsilon ^{-N}\|e^{-\kappa\<x\>/2}{\bm v}\|_{L^2}^2.
\end{align*}
By \eqref{eq:KM3b}, we have
\begin{align*}
	&|E_3|  = \av{\<  [\<\im\varepsilon\partial_x\>^{-N},V_4]\mathcal{A}^*\eta_1,\sigma_3e^{-\kappa\<x\>}v_1\> } \lesssim
 \varepsilon \|e^{-\frac{\kappa}{2}\<x\> }v_1\|_{L^2} \|e^{-\kappa\<x\> }v_1\|_{L^2}    \le  \varepsilon \|e^{-\frac{\kappa}{2}\<x\> }v_1\|_{L^2}^2.
\end{align*}
Proceeding like in Lemma \ref{lem:2v1}, we write
$E_4= E _{41}+ E _{42}$, with \footnote{Also in the estimate of the term $E_4$ in \cite[Lemma 8.2]{CMS2023}, like in the estimate of
$D_4$ in \cite[Lemma 8.1]{CMS2023},
the operator $\mathcal{T}$ has been omitted by mistake. The proof can be corrected proceeding like here.}
 \begin{align*}&
E _{41} \defeq\<\mathcal{T}   \left(  f'(\boldsymbol{\Phi}\pare{\bm z}_1) - f'(Q) \right) \eta _1
 ,e^{-\kappa \<x\>} {v}_1\>    \\& E _{42} \defeq\<\mathcal{T}   \left(  \left( f\left( \boldsymbol{\Phi}\pare{\bm z}_1 +\eta _1 \right)-f(\boldsymbol{\Phi}\pare{\bm z}_1)- f'(\boldsymbol{\Phi}\pare{\bm z}_1)\eta _1  \right)   \right)
 ,e^{-\kappa \<x\>} {v}_1\>  .
\end{align*}
 By  an  analogue of \eqref{eq:key1}
 and by \eqref{eq:KM1}--\eqref{eq:KM2},
 \begin{align*}
   |E _{41}|& \le \varepsilon ^{- N-1}  \|      \left(  f'(\boldsymbol{\Phi}\pare{\bm z}_1) - f'(Q) \right) \eta _1 \| _{L^2}
       \|e^{-\kappa \<x\>} {v}_1\|_{L^2}   \\&   \lesssim   \varepsilon ^{-\pare{N+1}}A |{\bm z}|   \|    e^{-\frac{p-1}{2}|x|} \eta _1 \| _{L^2} \|e^{-\kappa \<x\>} {v}_1\|_{L^2}
 \lesssim   \varepsilon ^{-N}A \delta   \|    e^{-\frac{p-1}{4}|x|} v _1 \| _{L^2} \|e^{-\kappa \<x\>} {v}_1\|_{L^2}  \\&   \lesssim     \varepsilon ^{- N-1}A  \delta  \|e^{-\kappa \<x\>} {v}_1\|_{L^2} ^2 .
\end{align*}
In analogy to estimates for  $D_{41} $ and $D_{42} $ in Lemma \ref{lem:2v1}, but using $\sech \left( \frac{4}{A}  x \right)e^{-\frac{\kappa }{2} \<x\>}\lesssim e^{-\kappa \<x\>}$ along with
  \eqref{eq:KM1}--\eqref{eq:KM2},
 \begin{align*}
   |E _{42}|& \le \|   \sech \left( \frac{4}{A}  x \right) \mathcal{T} \left(   f\left( \boldsymbol{\Phi}\pare{\bm z}_1 +\eta _1 \right)-f(\boldsymbol{\Phi}\pare{\bm z}_1)- f'(\boldsymbol{\Phi}\pare{\bm z}_1)\eta _1 \right)  \| _{L^2}
      \|  e^{-\frac{\kappa }{2} \<x\>} v_1\|_{L^2} \\&
        \lesssim   \varepsilon ^{-2\pare{N+1}}A  \|   \sech \left( \frac{2}{A}  x \right) \left( f\left( \boldsymbol{\Phi}\pare{\bm z}_1 +\eta _1 \right)-f(\boldsymbol{\Phi}\pare{\bm z}_1)- f'(\boldsymbol{\Phi}\pare{\bm z}_1)\eta _1  \right)  \| _{L^2} \|e^{-\kappa \<x\>} {v}_1\|_{L^2}
 \\& \lesssim   \varepsilon ^{-2\pare{N+1}}A \delta ^{p-1}  \|\boldsymbol{\eta}\|    _{\boldsymbol{ \Sigma }_A} \|  e^{-\frac{\kappa }{2} \<x\>} v_1\|_{L^2}    \le      \varepsilon ^{-2\pare{N+1}}A \delta ^{p-1} \left(  \|\boldsymbol{\eta}\|    _{\boldsymbol{ \Sigma }_A} ^2+  \|  e^{-\frac{\kappa }{2} \<x\>} v_1\|_{L^2} ^2    \right).
\end{align*}
 Finally,  we consider
\begin{align*}
 E _5 =
 \< \mathcal{T} D_{\bm z}\boldsymbol{\Phi}\pare{\bm z}\tilde{{\bm z}}_R ,\sigma_3 e^{-\kappa \<x\>}{\bm v}\> -\< \mathcal{T} \left(  f(\boldsymbol{\Phi}\pare{\bm z}_1)   - f(Q)-f' (Q)  \tilde{\boldsymbol{\Phi}}[{\bm z}]_1  \right) ,  e^{-\kappa \<x\>}  {v}_1\> =:E_{51}+E_{52}.
\end{align*}
The last three terms can be bounded similarly. We have
\begin{align*}
 | E_{51}|     \le     \|  \mathcal{T} D_{\bm z}\boldsymbol{\Phi}\pare{\bm z}\tilde{{\bm z}}_R \| _{L^2}
    \|e^{-\kappa \<x\>} {\bm v} \| _{L^2}  \lesssim  \|   \mathcal{A}^* D_{\bm z}\boldsymbol{\Phi}\pare{\bm z}\tilde{{\bm z}}_R \| _{L^2}
    \|e^{-\kappa \<x\>} {\bm v} \| _{L^2} \lesssim    \delta _2^{-1}|{\bm z}|^4 + \delta _2 \| e^{-\frac{\kappa}{2} \<x\>} {\bm v} \| _{L^2 }^2.
 \end{align*}
 Turning to $E _{52}$, we have
 \begin{align*}&
 | E_{52}|     \le      \|  \mathbf{E}  \| _{L^2  }
    \| e^{-\frac{\kappa}{2} \<x\>}v_1         \| _{L^2}   \text{  where } \\&\mathbf{E}
\defeq
  e^{-\frac{\kappa}{2} \<x\>} \mathcal{T} \left(  f(\boldsymbol{\Phi}\pare{\bm z}_1)   - f(Q)-f' (Q)  \tilde{\boldsymbol{\Phi}}[{\bm z}]_1  \right) .
 \end{align*}
 Let us write  $ \mathbf{E } =\mathbf{E }_{1}+\mathbf{E}_{2}$ with
 \begin{align*}
   & \mathbf{E}_{1}\defeq  e^{-\frac{\kappa}{2} \<x\>}\mathcal{T}   \chi (x-\cdot ) \left(  f(\boldsymbol{\Phi}\pare{\bm z}_1)   - f(Q)-f' (Q)  \tilde{\boldsymbol{\Phi}}[{\bm z}]_1  \right) ,\\&  \mathbf{E }_{2}\defeq  e^{-\frac{\kappa}{2} \<x\>} \mathcal{T}   \chi _1(x-\cdot ) \left(  f(\boldsymbol{\Phi}\pare{\bm z}_1)   - f(Q)-f' (Q)  \tilde{\boldsymbol{\Phi}}[{\bm z}]_1  \right) \text{ with }\chi _1=1-\chi .
 \end{align*}
 Like for \eqref{eq:estbfc2} and using \eqref{eq:Ijest2},   we have
 \begin{align*}
   \|  \mathbf{E} _2 \| _{L^2  } \lesssim \|  e^{-\frac{\kappa}{2} \<x\>} \int   e^{-\frac{|x-y|}{3\varepsilon}} |I (y)|dy\| _{L^2  } \lesssim  |{\bm z}|^2\|  e^{-\frac{\kappa}{2} \<x\>} \int   e^{-\frac{|x-y|}{3\varepsilon}} e^{-|y| } dy\| _{L^2  }  \lesssim  |{\bm z}|^2.
 \end{align*}
 In analogy to the bound of $\mathbf{C} _2$   in Lemma \ref{lem:2v1}, we bound
 \begin{align*}&
  \|  \mathbf{E} _2 \| _{L^2 } \lesssim  \int _{\R} d\xi   | \widehat{\chi}(\xi) |\sum _{j=0 }^{ \infty}
     2^{ -\frac{\kappa}{2} j} \|    \varphi _j    \mathcal{T}   e ^{\im \xi \sqcup }    \widetilde{\varphi} _j   \left(  f(\boldsymbol{\Phi}\pare{\bm z}_1)   - f(Q)-f' (Q)  \tilde{\boldsymbol{\Phi}}[{\bm z}]_1  \right) \| _{L^2  }
 \end{align*}
 for appropriate Paley--Littlewood decompositions with $ \supp  \varphi _j $ and $ \supp  \widetilde{\varphi} _j $ in $|x|\sim 2 ^{j}$ for $j\ge 1$ and   $|x|\lesssim 1$ for $j=0$.
 Next we split $  \|  \mathbf{E} _2 \| _{L^2  } \le A _{2l}+ A _{2h}$, with $A _{2l}$ involving the sum   for
   $j\le  \log (10A)$   and  $A _{2l}$ involving the sum   for
   $j>    \log (10A)$.   Now, using the same argument in  Lemma \ref{lem:2v1}, we have
    \begin{align*}
     A _{2l} \lesssim |{\bm z}|^2.
   \end{align*}
  Next, using also \eqref{eq:Ijest} and  \eqref{eq:Ijest2}
  \begin{align*}
     A _{2h}&  \lesssim  \int _{\R} d\xi   | \widehat{\chi}(\xi) |\sum _{j> \log (10A) }
     2^{ -\frac{\kappa}{2} j} \|         \mathcal{T}   e ^{\im \xi \sqcup }    \widetilde{\varphi} _j   \left(  f(\boldsymbol{\Phi}\pare{\bm z}_1)   - f(Q)-f' (Q)  \tilde{\boldsymbol{\Phi}}[{\bm z}]_1  \right) \| _{L^2  } \\& \lesssim  \varepsilon ^{-\pare{N+1}} A ^{-1}   \|        f(\boldsymbol{\Phi}\pare{\bm z}_1)   - f(Q)-f' (Q)  \tilde{\boldsymbol{\Phi}}[{\bm z}]_1   \| _{L^2  } \lesssim \varepsilon ^{-\pare{N+1}} A ^{-1} |{\bm z}|^2.
   \end{align*}
   Summing up
   \begin{align*}&  | E_{52}|     \le      \|  \mathbf{E}  \| _{L^2  }
    \| e^{-\frac{\kappa}{2} \<x\>}v_1         \| _{L^2} \lesssim  \delta _2^{-1}|{\bm z}|^4 + \delta _2 \| e^{-\frac{\kappa}{2} \<x\>}v_1 \| _{L^2 }^2
 \end{align*}
Collecting the estimates and bootstrapping,  we obtain the conclusion \eqref{eq:lem:2v2}.
\end{proof}

Combining Lemmas \ref{lem:2v1} and \ref{lem:2v2}, we have
\begin{lemma}\label{lem:2v3}
	For any $\mu>0$, we have
	\begin{align*}
		&\int_0^T\left(\| \sech \left( \frac{\kappa}{2}x \right)v_1'\|_{L^2}^2+\|\sech \left( \frac{\kappa}{2}x \right){\bm v}\|_{L^2}^2\right)
\lesssim  \delta   + o_\varepsilon (1)\int_0^T\| \boldsymbol{\eta} \|_{ \boldsymbol{ \Sigma }_A}^2+ \|{\bm z} \|_{L^4(0,T)}^4.
	\end{align*}
\end{lemma}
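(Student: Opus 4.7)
The plan is to deduce \eqref{eq:lem:2v3} by combining the two pointwise-in-time bounds \eqref{eq:lem:2v1} and \eqref{eq:lem:2v2} and then integrating on $[0,T]$, in a manner entirely analogous to the way \Cref{lem:1stV1,lem:1stV2} were combined in the proof of \Cref{prop:1stvirial}. The key pointwise observation is that $e^{-\kappa\<x\>/2}\lesssim \sech(\kappa x/2)$, so the quantities $\|e^{-\kappa\<x\>/2}v_1'\|_{L^2}^2$ and $\|e^{-\kappa\<x\>/2}v_1\|_{L^2}^2$ appearing on the right-hand side of \eqref{eq:lem:2v2} are controlled by the corresponding terms on the left-hand side of \eqref{eq:lem:2v1}. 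Multiplying (the squared form of) \eqref{eq:lem:2v2} by a small absolute constant $c>0$ and adding it to \eqref{eq:lem:2v1}, these contributions can be absorbed into the left-hand side, producing a pointwise bound of the schematic form
\begin{align*}
\|v_1'\|_{L^2_{-\kappa/2}}^2+\|{\bm v}\|_{L^2_{-\kappa/2}}^2 + \frac{d}{dt}\pare{\mathcal{I}_{\mathrm{2nd},1}+c\,\mathcal{I}_{\mathrm{2nd},2}} \lesssim \pare{\varepsilon^{-(N+1)}A^2\delta + A^{-1/2} + o_\varepsilon(1)}\|\boldsymbol{\eta}\|_{\boldsymbol{\Sigma}_A}^2 + |{\bm z}|^4.
\end{align*}

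Next I would integrate this inequality on $[0,T]$. The time derivative telescopes into the boundary contribution $\bra{\mathcal{I}_{\mathrm{2nd},1}+c\,\mathcal{I}_{\mathrm{2nd},2}}_0^T$. Since the weights $\psi_{A,B}$ and $e^{-\kappa\<x\>}$ defining $\mathcal{I}_{\mathrm{2nd},j}$ are bounded, both functionals are controlled by $\|{\bm v}\|_{L^2}^2$, which, by the $L^2$-boundedness of $\mathcal{T}$ together with \eqref{eq:cond_sta}, is $O(\delta^2)$ uniformly in $t$. Hence the boundary contribution is $O(\delta^2)$, which accounts for the $\delta$ on the right-hand side of \eqref{eq:lem:2v3} (indeed with $\delta^2$ in place of $\delta$, and since $\delta \ll 1$ this is consistent with the stated bound). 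The remaining term integrates as $\int_0^T|{\bm z}|^4\,dt\lesssim \|z_1\|_{L^4(0,T)}^4+\|z_2\|_{L^4(0,T)}^4=\|{\bm z}\|_{L^4(0,T)}^4$.

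Finally, the scaling hierarchy \eqref{eq:relABg} makes the coefficient in front of $\int_0^T\|\boldsymbol{\eta}\|_{\boldsymbol{\Sigma}_A}^2\,dt$ an $o_\varepsilon(1)$ quantity: $A^{-1/2}\to 0$ as $A\to\infty$, the existing $o_\varepsilon(1)$ term inherited from \Cref{lem:2v2} is already small, and the prefactor $\varepsilon^{-(N+1)}A^2\delta$ is negligible because $\log(\delta^{-1})\gg A$ while $A\gg \exp(\varepsilon^{-1})$, which forces $\delta$ to be doubly exponentially smaller than any power of $\varepsilon^{-1}$. The step most in need of care is the bookkeeping for the absorption constant $c$: it must be a purely numerical constant, independent of $\varepsilon,A,B,\delta$, so that it does not spoil the $o_\varepsilon(1)$ smallness of the final coefficient. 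This is automatic from the structure of the right-hand side of \eqref{eq:lem:2v1}, where the $\|v_1\|_{L^2_{-\kappa/2}}^2$ and $\|v_1'\|_{L^2_{-\kappa/2}}^2$ terms appear with an absolute implicit constant. No new estimate is required beyond \Cref{lem:2v1,lem:2v2}.
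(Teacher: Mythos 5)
Your argument follows the paper's proof essentially exactly: combine \Cref{lem:2v1} and \Cref{lem:2v2}, absorb the $v_1$-terms from the second into the first using $e^{-\kappa\langle x\rangle/2}\lesssim\sech(\kappa x/2)$, integrate in time, and observe that the boundary contributions from $\mathcal{I}_{\mathrm{2nd},1}$ and $\mathcal{I}_{\mathrm{2nd},2}$ are $\ll\delta$ given the scaling hierarchy \eqref{eq:relABg}.

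One piece of bookkeeping in your treatment of the boundary term is imprecise, though it does not change the conclusion. You claim $|\mathcal{I}_{\mathrm{2nd},j}|\lesssim\|{\bm v}\|_{L^2}^2=O(\delta^2)$ by ``$L^2$-boundedness of $\mathcal{T}$.'' This overlooks three sources of $\varepsilon$- and $B$-dependence: (i) $\mathcal{T}=\langle\im\varepsilon\partial_x\rangle^{-(1+N)}\mathcal{A}^\ast$ is $L^2$-bounded with operator norm $\sim\varepsilon^{-(N+1)}$, since $\mathcal{A}^\ast$ is a differential operator of order $N+1$; (ii) $\mathcal{I}_{\mathrm{2nd},1}=\langle v_2,\tilde{S}_{A,B}v_1\rangle$ involves $\partial_x v_1$, so it is bounded by $\|\psi_{A,B}\|_{L^\infty}\|v_1\|_{H^1}\|v_2\|_{L^2}$, not by $\|{\bm v}\|_{L^2}^2$; and (iii) the weight satisfies $\|\psi_{A,B}\|_{L^\infty}=\|\chi_A^2\varphi_B\|_{L^\infty}\lesssim B$, not $\lesssim 1$, since $\varphi_B(x)\to\int_0^\infty\zeta_B^2\sim B$ as $x\to\infty$. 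Putting these together, the honest estimate is the one the paper records,
\begin{align*}
|\mathcal{I}_{\mathrm{2nd},1}|\lesssim B\,\varepsilon^{-(N+1)}\delta^2,
\qquad
|\mathcal{I}_{\mathrm{2nd},2}|\lesssim \varepsilon^{-(N+1)}\delta^2 .
\end{align*}
These are still $\ll\delta$ because \eqref{eq:relABg} arranges $\log(\delta^{-1})\gg A\gg B\gg\exp(\varepsilon^{-1})$, so your conclusion is correct; you should simply replace the asserted absolute bound $O(\delta^2)$ by the $B,\varepsilon$-dependent one and invoke the hierarchy to close.
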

\begin{proof}
	The claim follows from Lemmas  \ref{lem:2v1} and \ref{lem:2v2} and
	\begin{align*}
		|\mathcal{I}_{\mathrm{2nd},1}|\lesssim B\varepsilon^{-\pare{N+1}}\delta^2,\
		|\mathcal{I}_{\mathrm{2nd},2}|\lesssim \varepsilon^{-\pare{N+1}}\delta^2.
	\end{align*}
\end{proof}

\begin{proof}[Proof of Proposition \ref{prop:2ndvirial}]
	It  is a consequence of Lemma \ref{lem:2v3}     and inequality \eqref{eq:key1}.
\end{proof}

\section*{Acknowledgments}
S.C. was supported   by the Prin 2020 project \textit{Hamiltonian and Dispersive PDEs} N. 2020XB3EFL.
M.M. was supported by the JSPS KAKENHI Grant Number 19K03579 and G19KK0066A.
S.S. was partially supported by Gnampa, Indam and the PRIN 2022 project {\it Turbulent Effects vs Stability in Equations from Oceanography}, acronym TESEO.

	\begin{footnotesize}
	
	\end{footnotesize}

Department of Mathematics, Informatics and Geosciences,  University
of Trieste, via Valerio  12/1  Trieste, 34127  Italy.
{\it E-mail Address}: {\tt scuccagna@units.it}

Department of Mathematics and Informatics,
Graduate School of Science,
Chiba University,
Chiba 263-8522, Japan.
{\it E-mail Address}: {\tt maeda@math.s.chiba-u.ac.jp}

Department of Mathematics, Informatics and Geosciences,  University
of Trieste, via Valerio  12/1  Trieste, 34127  Italy
and
Dipartimento di Matematica Federigo Enriques, Università Statale di Milano, Via Saldini 50, 20133, Milano
.
{\it E-mail Address}: {\tt federico.murgante@unimi.it}

Department of Mathematics, Informatics and Geosciences,  University
of Trieste, via Valerio  12/1  Trieste, 34127  Italy.
{\it E-mail Address}: {\tt STEFANO.SCROBOGNA@units.it}

\end{document}